\begin{document}
\newtheorem{theoreme}{Theorem}
\newtheorem{ex}{Example}
\newtheorem{lemme}{Lemma}
\newtheorem{remarque}{Remark}
\newtheorem{exemple}{Example}
\newtheorem{corolaire}{Corollary}
\newtheorem*{Conjecture}{Conjecture}
\newtheorem{hyp}{Hypothesis}
\newcommand\Sp{\mathbb{S}}
\providecommand{\keywords}[1]{\textbf{\textit{Index terms---}} #1}

\title{Local Weak Limits of Laplace Eigenfunctions}
\author{Maxime Ingremeau}
%\address{Université Côte d’Azur, CNRS, LJAD, France}
%\email{maxime.ingremeau@univ-cotedazur.fr}

%\subjclass[2010]{Primary 35P20}

%\keywords{Semiclassical analysis, Quantum chaos, Berry's random waves conjecture, Nodal domains, Benjamini-Schramm convergence}

\maketitle

\begin{abstract}
In this paper, we introduce a new notion of convergence for the Laplace eigenfunctions in the semiclassical limit, the local weak convergence. This allows us to give a rigorous statement of Berry's random wave conjecture. Using recent results of Bourgain, Buckley and Wigman, we will prove that some deterministic families of eigenfunctions on $\T^2$ satisfy the conclusions of the random wave conjecture. We also show that on an arbitrary domain, a sequence of Laplace eigenfunctions always admits local weak limits. We explain why these local weak limits can be a powerful tool to study the asymptotic number of nodal domains.
\end{abstract}

\section{Introduction}
In his seminal paper \cite{berry1977regular}, Berry  suggested that high-frequency eigenfunctions of the Laplacian in geometries where the classical dynamics is sufficiently chaotic (for instance, negatively curved manifolds) should behave like random combinations of plane waves. This heuristics, known as the random wave model (RWM), has led to many conjectures concerning the $L^p$ norms, semiclassical measures or nodal domains of chaotic eigenfunctions. Several of these conjectures have been checked numerically (\cite{hejhal1992topography}, \cite{aurich1993statistical}, \cite{backer1998rate}, \cite{barnett2006asymptotic}) or experimentally (\cite{savytskyy2004experimental}, \cite{hul2005investigation} \cite{kuhl2007nodal}). However, it is not clear how Berry's general idea should be formulated in a rigorous way: saying that a sequence of deterministic objects behave asymptotically in a random way can be interpreted in different ways. The reader can for instance refer to \cite{rudnick1994behaviour}, \cite{zelditch2010recent} and \cite{nonnenmacher2013anatomy} for different mathematical perspectives on Berry's conjecture.

In this paper, we introduce another interpretation of Berry's random waves conjecture by associating to a sequence of Laplace eigenfunctions a sequence of measures on an abstract Polish space, which we call \emph{local measures}. We show that we may always extract a subsequence of local measures which will converge. The limit, which we name a \emph{local weak limit} of the sequence of eigenfunctions, is a measure on the space
\begin{equation}\label{eq:DefFP}
FP := \{ f\in C^\infty(\R^d; \R) \text{ such that } -\Delta f = f\},
\end{equation}
whose topology is given by the distance
\begin{equation}\label{eq:DefDist}
d(f,g):= \inf \{\varepsilon>0; \sup_{|x|< \varepsilon^{-1}} |f(x)-g(x)| <\varepsilon \}.
\end{equation}

More precisely, let $\Omega$ be an open set in $\R^d$, or $\Omega=\T^d$, and $\phi_n$ be an orthogonal sequence of real-valued eigenfunctions of the Dirichlet Laplacian on $\Omega$, satisfying\footnote{The reason for this normalization, which we will take in all the paper, is the following: we will often take a point $x_0$ uniformly at random in $\Omega$, and we want $|\phi_n(x_0)|^2$ to have average value 1.}  $\|\phi_n\|_{L^2(\Omega)}^2= \Vol(\Omega)$ and
\begin{equation*}
-h_n^2 \Delta \phi_n = \phi_n.
\end{equation*}
 For any Borel set $U\subset \Omega$  of positive Lebesgue measure, we will define in Section \ref{sec:Construction} local measures $LM_U(\phi_n)$ associated to $\phi_n$, which are Borel measures on the set $C(\R^d)$ equipped with the distance defined in (\ref{eq:DefDist}). We will then define $\sigma_U\left((\phi_n)_n\right)$ to be the set of accumulation points of $\left(LM_U(\phi_n)\right)_n$ for weak convergence. We will show that for any sequence $(\phi_n)$ of eigenfunctions, $\sigma_U((\phi_n)_n)$ is never empty, and it is supported on $FP$.

Local measures and local weak limits are quite technical to introduce, and we defer their precise definition to the next section. However, the idea behind is rather simple. A solution to the equation $-h_n^2\Delta \phi_n = \phi_n$, when rescaled to a ball $B(x_0, R h_n)$, will resemble\footnote{In general, it will not be equal to an element of $FP$. Indeed, if this were the case, the solution could be continued as an analytic function on all of $\R^d$.} an element of $FP$. The local measure associated to $\phi_n$ will somehow ``count how many times we will resemble a given element of $FP$ when varying the point $x_0$ in $U$".
The notion of local weak convergence we introduce here was inspired by local weak convergence of graphs, also known as Benjamini-Schramm convergence (\cite{benjamini2011recurrence}).

Local weak convergence of eigenfunctions allows us to give a rigorous statement of Berry's conjecture about Laplace eigenfunctions in chaotic billiards. We refer the reader to 
\cite{chernov2006chaotic} for the definition and examples of chaotic billiards. Our approach could also be used to formulate Berry's conjecture on manifolds whose geodesic flow is Anosov (for instance, on manifolds of negative curvature). However, on manifolds, local weak limits will in general depend on a choice of a local orthonormal frame; these issues will be pursued elsewhere. Note that, on random regular graphs, a weak version of Berry's conjecture was proven in \cite{backhausz2019almost}.

\subsubsection*{Random Gaussian Fields as Local Weak Limits and Berry's conjecture}
The isotropic monochromatic Gaussian random field $\Psi_{Berry}: \R^d\rightarrow \R$ is uniquely defined as the centred stationary Gaussian random field, with covariance function
$$\E [\Psi_{Berry}(x) \Psi_{Berry}(x')]= \int_{\Sp^{d-1}} e^{i(x-x')\cdot \theta} \mathrm{d}\theta.$$
We refer the reader to \cite{abrahamsen1997review} for more details on Gaussian random fields.

In dimension 2, $\Psi_{Berry}$ can alternatively be defined, in polar coordinates, as
\begin{equation*}
\Psi_{Berry}(r,\theta) = X_0 J_{0} (kr) + \sqrt{2} \sum_{n\geq 1} J_{n} (kr) \left[X_n \cos(n\theta) + Y_n \sin (n\theta) \right],
\end{equation*}
 where $J_n$ is the $n$-th Bessel function, and where the $(X_n)_{n\geq 0}, (Y_n)_{n\geq 1}$ are independent families of standard Gaussian variables (see \cite[\S 4.2]{nazarov2010random}).

Almost surely, $\Psi_{Berry}$ is an element of $FP$, so that, if $A\subset FP$ is a Borel set, $\mathbb{P}(\Psi_{Berry} \in A)$ is well-defined, and
$$\mu_{Berry}: A \mapsto \mathbb{P}(\Psi_{Berry}\in A)$$
defines a measure on $FP$.

\begin{Conjecture}[Berry's Random Wave Conjecture]
Let $\Omega\subset \R^d$ be a \emph{chaotic} billiard, and let $(\phi_n)$ be an orthogonal sequence of real-valued eigenfunctions of the Dirichlet Laplacian in $\Omega$, satisfying $\|\phi_n\|_{L^2(\Omega)}^2=\Vol(\Omega)$. Then
$$\sigma_\Omega((\phi_n)_n) = \{\mu_{Berry}\}.$$
\end{Conjecture}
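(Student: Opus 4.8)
The plan is to turn the identification of the local weak limit into a statement about joint moments of the rescaled eigenfunction, and then to feed those moments with the ergodic (or, on $\T^2$, arithmetic) structure of the billiard. By the general existence/tightness theorem of Section~\ref{sec:Construction}, $\sigma_\Omega(\phi_n)$ is nonempty and the family of local measures is precompact, so it is enough to show that \emph{every} accumulation point $\mu$ equals $\mu_{Berry}$; fix a subsequence along which the local measures converge to such a $\mu$. Since elements of $FP(d)$ are real-analytic and the distance on $FP(d)$ only sees uniform norms on balls, the Borel measure $\mu$ is determined by the joint laws under $\mu$ of the point evaluations $u\mapsto(u(y_1),\dots,u(y_k))$. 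Unwinding the construction, the law of $u(y)$ under the $n$-th local measure is the law of $x_0\mapsto\phi_n(x_0+h_n y)$ with $x_0$ uniform in a fixed window $U$, so the whole problem reduces to computing
\[
\lim_n\ \frac{1}{\Vol(U)}\int_U \prod_{j=1}^k \big(\phi_n(x_0+h_n y_j)\big)^{m_j}\,\mathrm{d}x_0
\]
and checking that it equals the Gaussian moment $\E\big[\prod_j F_{Berry}(y_j)^{m_j}\big]$, which Wick's formula expresses through the covariance $\int_{\Sp^{d-1}}e^{i(y-y')\cdot\theta}\,\mathrm{d}\theta$. A preliminary point is that this reduction requires the limiting finite-dimensional distributions to be moment-determined, hence some uniform control ruling out concentration of $\phi_n$ at the wavelength scale.

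The second moments should be within reach of existing tools. The limit $\mu$ is automatically invariant under translations of $\R^d$ (the window $U$ disappears in the limit), so $\mu$ is a stationary field and its covariance necessarily has the form $\int_{\Sp^{d-1}}e^{i(y-y')\cdot\theta}\,\mathrm{d}\nu(\theta)$ for some probability measure $\nu$ on the sphere. For a chaotic billiard, quantum ergodicity (Shnirelman--Zelditch--Colin de Verdière) forces the semiclassical defect measure of a density-one subsequence of $(\phi_n)$ to be normalized Liouville measure, and a microlocal computation then identifies $\nu$ with the uniform measure on $\Sp^{d-1}$; thus the two-point function of $\mu$ matches that of $\mu_{Berry}$.

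The real content, and the step I expect to be the genuine obstacle, is Gaussianity: that all higher joint moments are the Wick sums built from this covariance. On $\T^2$ this is precisely where the cited work of Bourgain, Buckley and Wigman enters: writing $\phi_n(x_0+h_n y)=\sum_{|\xi|=1/h_n}c_\xi\,e^{i\xi\cdot x_0}\,e^{i(h_n\xi)\cdot y}$ as a sum over lattice points on a circle, the average over $x_0$ annihilates every monomial whose frequencies do not cancel, so the surviving terms are governed by the number of (near-)solutions of $\xi_1\pm\cdots\pm\xi_m=0$ with $|\xi_i|=1/h_n$; along radii whose lattice points equidistribute, this count is dominated by the diagonal pairings, which reproduce exactly the Gaussian Wick structure while the off-diagonal contribution and the possible concentration of $\phi_n$ are both negligible. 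For a general chaotic billiard there is no Fourier or arithmetic mechanism to play this role, and one would need a fundamentally new input --- a form of higher-order quantum ergodicity controlling the mixed products $\int_U\prod_j\phi_n(x_0+h_n y_j)\,\mathrm{d}x_0$ at the wavelength scale, which appears to demand quantitative mixing of the billiard flow up to Ehrenfest time and would imply, among other things, conjectural $L^p$ bounds on chaotic eigenfunctions. A secondary obstacle is that quantum ergodicity only controls a density-one subsequence, whereas the conjecture asserts the conclusion for every orthonormal sequence; closing this gap is a quantum-unique-ergodicity--type statement that is itself open (and expected to fail for some billiards). For these reasons I would treat the $\T^2$ statements announced in the abstract as the realistic deliverable and, for the full conjecture, aim only to isolate the precise dynamical and spectral estimates that would suffice rather than to prove it outright.
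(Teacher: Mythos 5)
The statement you were asked to prove is labelled a \emph{Conjecture} in the paper and has no proof there; the paper's actual theorem (Theorem~\ref{th:Tore}) is the special case of deterministic eigenfunctions on $\T^2$ under arithmetic hypotheses, proved via the Bourgain--Buckley--Wigman machinery. You correctly recognize this: your concluding paragraph identifies the full conjecture as out of reach and the $\T^2$ statement as the realistic deliverable, which matches the paper's scope exactly. Your diagnosis of the obstacles (the density-one versus every-sequence gap requiring a QUE-type input; the absence of any mechanism forcing Gaussianity of higher moments for a general chaotic billiard; the arithmetic replacement for it on $\T^2$) is the right one and is consistent with what the paper says informally.

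One technical remark on the route you sketch. You propose to identify the limit measure through its joint \emph{moments} $\lim_n \frac{1}{\Vol(U)}\int_U \prod_j \phi_n(x_0+h_n y_j)^{m_j}\,\mathrm dx_0$, and you flag that this requires the limiting finite-dimensional distributions to be moment-determined. The paper avoids this issue entirely: Lemma~\ref{lem:Criterion} (Stone--Weierstrass for a separating subalgebra of \emph{bounded} continuous functions, together with tightness) and Lemma~\ref{lem:criteria} reduce the identification to convergence of $\langle LM_{k,U}^{\iota_k}(\phi_n), F\circ\beta_N\rangle$ for compactly supported $F$ and the local Fourier coefficients $\beta_N$. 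These are bounded test functionals, so no moment-determinacy hypothesis is needed, and the monochromatic restriction to $FP(d)$ makes $\beta_N$ the natural coordinates. Your moment-based plan is not wrong, but it introduces a subtlety (the $\phi_n$ can be large on small sets, so truncation is unavoidable and the truncated moments are exactly what the paper's bounded functionals encode) that the paper's framework was designed to sidestep. Your observation that any accumulation point is translation-invariant, hence stationary, is indeed established in the paper (the lemma preceding Proposition~\ref{prop:BerryPlusPetit}) and is used there via the Fomin--Grenander--Maruyama ergodicity theorem, though for a different purpose (localizing Berry's conjecture to subdomains) rather than for pinning down the covariance.

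Finally, a small caution on your second-moment step: quantum ergodicity identifies the defect measure with Liouville for a density-one subsequence, and you correctly note this only yields the desired covariance along that subsequence. But it is worth being explicit that the converse implication actually proved in the paper runs the other way: Section~\ref{sec:QUE} shows $\sigma_\Omega(\phi_n)=\{\mu_{Berry}\}$ \emph{implies} quantum ergodicity of the full sequence, not that QE plus anything tractable implies the local weak limit statement. So even granting QUE as an input, the covariance computation you outline would pin down only the second-order structure of $\mu$, and, as you say, the Gaussianity of the higher-order structure is the genuinely open part with no known dynamical substitute for the arithmetic input on $\T^2$.
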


An analogous statement of Berry's conjecture on manifolds of negative curvature was given in \cite{abert2018eigenfunctions}.

In this paper, the authors consider the \emph{level aspect} framework, which is more general than semiclassical asymptotics: they consider a sequence of manifolds, and sequences of eigenfunctions on these manifolds, whose eigenvalues lie in a fixed interval. For instance, starting from a given Riemannian manifold, we can rescale it, and recover the semiclassical asymptotics; but we could also consider sequences of hyperbolic surfaces with a topology of increasing complexity, and study the asymptotics of a sequence of eigenfunctions on these manifolds.
The authors state a version of Berry's conjecture in the level aspect, when the manifolds considered are locally symmetric, and form an expanding sequence.

Although the framework considered in \cite{abert2018eigenfunctions} is rather different from ours (they mainly consider eigenfunctions on sequences of locally symmetric spaces while we consider sequences of eigenfunctions on a domain in $\R^d$), we insist that their Conjecture 1 is essentially the same as our version of Berry's conjecture, and we find their point of view very interesting, and complementary to ours. 
%Our approach is much more functional analytic, as one of our main result is a compactness result (the fact that from any sequence of eigenfunctions, we may extract a subsequence having a local weak limit).

\subsubsection*{Consequences of Berry's conjecture}
In Section \ref{sec:Consequences}, we will state several consequences of our interpretation of Berry's conjecture. If $(\phi_n)$ is a sequence of eigenfunctions such that $\sigma_\Omega((\phi_n)_n) = \{\mu_{Berry}\}$, then
\begin{itemize}
\item $\phi_n$ satisfies quantum unique ergodicity (see Section \ref{sec:QUE}).
\item The number of nodal domains of $\phi_n$ grows at least as $c_{NS} h_n^{-d}$, where $c_{NS}$ is the Bogomolny-Schmit constant, or Nazarov-Sodin constant, which was introduced in \cite{bogomolny2002percolation} and \cite{nazarov2009number} (see section \ref{sec:Nodal}). One would expect that the number of nodal domains is actually equivalent to $c_{NS} h_n^{-d}$. We could only show it in dimension 2, for analytic domains\footnote{Actually, what we really need is that the upper bound in Yau's conjecture on the length of the nodal set holds. This is known for analytic manifolds thanks to \cite{DF}, but the conjecture is still open on smooth surfaces, in spite of the progresses made in \cite{DFsurfaces}, and more recently in \cite{logunovSurf}.}. We believe that local weak limits are a powerful tool to study lower bounds on the number of nodal domains of eigenfunctions, even in the situations where the limit measure is not $\mu_{Berry}$: see the discussion at the end of \ref{sec:Nodal}.
\item We have $\|\phi_n\|_{L^\infty} \underset{n\to +\infty}{\longrightarrow} +\infty$. Note that  our interpretation of Berry's conjecture gives no interesting upper bound on the $L^\infty$ norm of eigenfunctions. Indeed, we can find sequences of eigenfunctions $(\phi_n)$ satisfying our version of Berry's conjecture,  but such that $\|\phi_n\|_{L^\infty} \geq c h_n^{-\frac{d-1}{2}}$ for some $c>0$. In other words, our version of Berry's conjecture does not allow us to improve the $L^\infty$ bound on eigenfunctions, as any normalized sequence of eigenfunctions $(\phi_n)$ satisfies the Hörmander bound $\|\phi_n\|_{L^\infty} \leq C h_n^{-\frac{d-1}{2}}$.

The reason why our version of Berry's conjecture gives no interesting upper (or lower) bound on $L^\infty$ norms is that local weak limits capture how eigenfunctions look like on typical sets (i.e., on sets of large measure). But the places where the eigenfunctions are very large are not typical at all, so they disappear in the limit.
\end{itemize}

Other interpretations of Berry's conjecture led to upper bounds on $\|\phi_n\|_{L^\infty}$ which were logarithmic in $h_n$ (see for instance \cite[\S 4]{nonnenmacher2013anatomy} and the references therein). However, we know since \cite{rudnick1994behaviour} that on some compact hyperbolic manifolds of dimension 3, $\|\phi_n\|_{L^\infty}$ can grow polynomially with $h_n$. This has led the authors of \cite{rudnick1994behaviour} to say that the eigenfunctions on such manifolds did not satisfy Berry's conjecture; one could hence think that Berry's conjecture should only hold on \emph{generic} manifolds of negative curvature.
However, since our interpretation of Berry's conjecture does not contradict the results of \cite{rudnick1994behaviour}, we believe it should hold on any manifold of negative curvature (or, more generally, on manifolds whose geodesic flow is Anosov), and in chaotic billiards (in the sense of \cite{chernov2006chaotic}).

\subsubsection*{Random wave model for deterministic toral eigenfunctions}
The methods introduced by Bourgain, Buckley and Wigman in  \cite{bourgain2014toral} and \cite{buckley2015number} (see also \cite{sartori2020planck}) to study the number of nodal domains allow to prove that certain deterministic families of eigenfunctions on $\T^2$ satisfy the conclusion of Berry's conjecture (although no chaotic dynamics in present here).

On $\T^2= \R^2\backslash \Z^2$, the eigenvalues of the Laplacian are the numbers $(4\pi^2 E_n)_{n\in \N}$, where $E_n$ is the increasing sequence of numbers such that
$$\mathcal{E}_{E_n} := \{\xi\in \Z^2; |\xi|^2 = E_n\}$$
is non-empty. For such an $E_n$, a very specific associated eigenfunction is given by
\begin{equation}\label{eq:EigenToral1}
\varphi_n(x):= \frac{1}{|\mathcal{E}_{E_n}|^{1/2}} \sum_{\xi\in \mathcal{E}_n} e^{2i\pi x\cdot \xi}.
\end{equation}
\begin{theoreme}\label{th:Tore1}
There exists a density 1 sequence $n_j$ such that  we have
$$\sigma_{\T^2}((\varphi_{n_j})_j) = \{\mu_{Berry}\}.$$
\end{theoreme}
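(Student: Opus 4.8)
The plan is to unwind the notion of local weak convergence from Section~\ref{sec:Construction} and to reduce the theorem to a central limit theorem for the rescaled translates of $f_n$. Set $h_n=(2\pi\sqrt{E_n})^{-1}$, so that $-h_n^2\Delta f_n=f_n$, and for $x_0\in\T^2$ let $T^n_{x_0}f_n$ denote the function $y\mapsto f_n(x_0+h_ny)$ on $\R^2$. A crucial simplification of the toral case is that $f_n$ lifts to a $\Z^2$-periodic function on $\R^2$, so each $T^n_{x_0}f_n$ exactly solves $-\Delta u=u$ on all of $\R^2$: it is a genuine element of $FP(2)$, and no truncation or approximation is needed. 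Using this, one checks from the construction of Section~\ref{sec:Construction} that the local measure of $f_n$ on $\T^2$ is the law $\mathcal m_n$ of the random element $T^n_Xf_n\in FP(2)$, where $X$ is uniform on $\T^2$. Since $\sigma_{\T^2}(f_n)$ is always a nonempty set of probability measures on $FP(2)$, to prove the theorem it suffices to produce a density $1$ sequence $n_j$ along which $\mathcal m_{n_j}\to\mu_{Berry}$ weakly. As the cylinder functionals $f\mapsto F(f(y_1),\dots,f(y_k))$ with $F\in C_b(\R^k)$ and $y_i\in\R^2$ form a point-separating algebra of bounded continuous functions on the Polish space $FP(2)$, such weak convergence will follow from (i) tightness of $(\mathcal m_n)_n$, together with (ii) convergence of the finite-dimensional distributions of $T^n_Xf_n$ to those of $F_{Berry}$ along a density $1$ subsequence.

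Step (i) is soft. For each realisation $T^n_Xf_n$ solves $-\Delta u=u$, and since $\E\big[|T^n_Xf_n(y)|^2\big]=|\mathcal{E}_{E_n}|^{-1}\sum_{\xi\in\mathcal{E}_{E_n}}1=1$ for every $y$, we get $\E\big[\|T^n_Xf_n\|_{L^2(B_R)}^2\big]\le\pi R^2$ uniformly in $n$. Elliptic regularity (from $u=-\Delta u$) and Sobolev embedding then give $\E\big[\|T^n_Xf_n\|_{C^k(B_R)}^2\big]\le C_{R,k}$ for all $R,k$, uniformly in $n$; since the topology of $FP(2)$ is that of local uniform convergence, which on the set of solutions of $-\Delta u=u$ agrees with $C^\infty_{\mathrm{loc}}$ convergence, the family $(\mathcal m_n)$ is tight on $FP(2)$.

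Step (ii) is the heart of the matter, and I would prove it by the method of moments. Fix $y_1,\dots,y_k\in\R^2$; the target is the centred Gaussian vector with covariance $\Sigma_{ab}=c(y_a-y_b)$, $c(z):=\int_{\Sp^{1}}e^{iz\cdot\theta}\,\mathrm{d}\theta$, that is, the law of $(F_{Berry}(y_1),\dots,F_{Berry}(y_k))$. Expanding a moment and using orthogonality of the characters $x_0\mapsto e^{2\pi ix_0\cdot\xi}$ on $\T^2$ gives, for any $(i_1,\dots,i_m)\in\{1,\dots,k\}^m$,
\begin{equation*}
\E\Big[\prod_{j=1}^{m}T^n_Xf_n(y_{i_j})\Big]=\frac{1}{|\mathcal{E}_{E_n}|^{m/2}}\sum_{\substack{\xi_1,\dots,\xi_m\in\mathcal{E}_{E_n}\\ \xi_1+\dots+\xi_m=0}}\ \prod_{j=1}^{m}e^{iy_{i_j}\cdot\theta_{\xi_j}},\qquad\theta_\xi:=\xi/|\xi|.
\end{equation*}
I would split the inner sum into the \emph{diagonal} tuples, in which the $\xi_j$ cancel in $m/2$ opposite pairs, and the remaining \emph{nontrivial} tuples. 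On a diagonal tuple attached to a perfect matching $P$ of $\{1,\dots,m\}$ the summand factorises over the pairs, and summing the free variable of each pair over $\mathcal{E}_{E_n}$ and dividing by $|\mathcal{E}_{E_n}|^{m/2}$ produces $\prod_{\{a,b\}\in P}\big(|\mathcal{E}_{E_n}|^{-1}\sum_{\xi\in\mathcal{E}_{E_n}}e^{i(y_{i_a}-y_{i_b})\cdot\theta_\xi}\big)$; by the equidistribution on $\Sp^{1}$ of the angles of the lattice points of $\{|\xi|^2=E_n\}$ — valid along a density $1$ sequence by Bourgain's work, as used in \cite{bourgain2014toral,buckley2015number} — this converges to $\prod_{\{a,b\}\in P}c(y_{i_a}-y_{i_b})$, and summing over all matchings $P$ recovers exactly Wick's formula for the moments of $\mathcal N(0,\Sigma)$, the overlaps between distinct matchings being $O(|\mathcal{E}_{E_n}|^{m/2-1})$ and hence negligible. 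The contribution of the nontrivial tuples is controlled by the key arithmetic input of \cite{bourgain2014toral,buckley2015number}: along a density $1$ sequence, the number of $m$-tuples in $\mathcal{E}_{E_n}^m$ summing to $0$ and not of diagonal type is $o(|\mathcal{E}_{E_n}|^{m/2})$, for every $m$. A diagonal extraction over $m$ and over a countable dense family of point configurations then yields a single density $1$ sequence $n_j$ along which every joint moment of $(T^n_Xf_n(y_1),\dots,T^n_Xf_n(y_k))$ converges to that of $\mathcal N(0,\Sigma)$; as Gaussian vectors are moment-determinate, the finite-dimensional distributions converge. Together with the tightness of Step (i), this gives $\mathcal m_{n_j}\to\mu_{Berry}$.

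The genuine difficulty lies entirely in the inputs required for Step (ii): the angular equidistribution of the lattice points on $\{|\xi|^2=E_n\}$ and, above all, the uniform smallness of the nontrivial correlations, both along a density $1$ sequence of energies. These are exactly the estimates established by Bourgain, Buckley and Wigman, so once they are quoted the remaining work — relating the formalism of Section~\ref{sec:Construction} to process-level convergence, the tightness via elliptic regularity, and the diagonal extraction making one density $1$ subsequence serve all moments — is careful but routine.
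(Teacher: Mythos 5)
Your proposal is correct in outline but follows a genuinely different route from the paper. The paper proves Theorem~\ref{th:Tore1} by first proving the more general Theorem~\ref{th:Tore}, and the engine there is the Buckley--Wigman \emph{derandomization}: one approximates the rescaled translate $\tilde f_{x,E}$ by a finite trigonometric sum $\psi_{x,E}$ supported on $\approx K$ frequency sectors (their Lemmas~5.1--5.2), then builds a measure-preserving map $\tau:\Omega'\to\T^2$ coupling $\psi_{\tau(\omega),E}$ to a genuinely Gaussian $\Psi^K_\omega$ (their Proposition~3.3), and finally lets $K\to\infty$ and invokes a central limit theorem for $\Psi^K_\omega$ via the local Fourier coefficients $\beta_N$ and the criterion of Lemma~\ref{lem:criteria}. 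Your argument avoids the coupling entirely: you compute the joint moments of $(T^n_Xf_n(y_1),\dots,T^n_Xf_n(y_k))$ directly, identify the paired ``diagonal'' tuples as producing Wick's formula via angular equidistribution (point~5 of Hypothesis~\ref{hyp:tore}), control the remaining tuples with the Bombieri--Bourgain bound on vanishing spectral correlations (point~6), and close with moment-determinacy of Gaussians plus the Stone--Weierstrass criterion of Lemma~\ref{lem:Criterion} applied to the algebra of cylinder functionals. Both routes rest on the same two arithmetic inputs along a density~$1$ sequence; the paper outsources the probabilistic work to \cite{buckley2015number}, while your approach is more self-contained at the cost of re-doing the moment combinatorics. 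Your observation that on the torus one may dispense with the cutoff $\chi$ (so $T^n_{x_0}f_n\in FP(2)$ exactly and $\mathcal m_n$ lives on $FP(2)$ from the start) is a nice simplification that the paper also implicitly uses via Definition~\ref{def:Local}.

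Two points you should nail down to make the argument watertight. First, the combinatorial step hides real work: Hypothesis~\ref{hyp:tore} bounds \emph{minimally} vanishing tuples of \emph{distinct} lattice points, whereas your moment sum ranges over arbitrary ordered $m$-tuples with repetitions allowed, so you must decompose a general vanishing tuple into opposite pairs and longer minimal relations and check that every non-diagonal contribution is $o(N_E^{m/2})$ — this is precisely the bookkeeping that \cite{buckley2015number} carry out, and it is worth either reproducing or citing with a precise reference rather than asserting. Second, the ``diagonal extraction over $m$'' deserves care: an arbitrary intersection of countably many density~$1$ sets need not have density~$1$. Here you are saved because Hypothesis~\ref{hyp:tore}(6) provides a single density~$1$ set of energies with $B(E_n)\to\infty$, so for each fixed $m$ the bound holds for all large $n$ in that fixed set and no further extraction is needed; you should say this explicitly instead of invoking a diagonal argument. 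With these two clarifications your proof is a valid alternative to the paper's.
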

Actually, this theorem holds for eigenfunctions which are much more general than (\ref{eq:EigenToral1}). The precise assumptions we need are given in Hypothesis \ref{hyp:tore}, in Section \ref{sec:Torus}. Note that the methods of \cite{bourgain2014toral} and \cite{buckley2015number} are only valid in dimension 2, and we don't know if the statement remains true in higher dimensions.

\subsubsection*{Organisation of the paper}
In Section \ref{sec:Construction}, we give the definition of local measures and local weak limits of eigenfunctions. We prove that a sequence of local measures of eigenfunctions always has a subsequence which converges to a local weak limit. We then give some elementary examples of computations of local weak limits. In Section \ref{sec:Criteria}, we give some criteria to identify the local weak limits of a sequence of eigenfunctions. In Section \ref{sec:Torus}, we prove a more precise version of Theorem \ref{th:Tore1}. In Section \ref{sec:Consequences}, we explain several consequences of our interpretation of Berry's conjecture. Finally, in Appendix \ref{sec:App}, we recall the functionnal analytic properties of the spaces of functions we use in our constructions, based on distances similar to the one in (\ref{eq:DefDist}).

\subsubsection*{Acknowledgement}
The author would like to thank N. Anantharaman, N. Bergeron, A. Deleporte, E. Le Masson, S. Nonnenmacher and A. Rivera for their interest in our work and for useful discussion. 

He would especially like to thank the anonymous referees for their very careful reading and many valuable comments, which improved greatly the presentation of the paper.

The author was partially funded by the Labex IRMIA, and partially supported by the Agence Nationale de la Recherche project GeRaSic
(ANR-13-BS01-0007-01).

\section{Construction of local weak limits}\label{sec:Construction}
In this section, we will use the notations and results of Appendix \ref{sec:App}. In particular, for $k\in \N$, $\mathcal{H}^k$ is the space of $C^k$ functions equipped with a ``local" distance similar to  the one introduced in (\ref{eq:DefDist}), $\mathcal{M}^k$ is the space of finite signed measures on $\mathcal{H}^k$, and $\mathcal{C}_b(\mathcal{H}^k)$ is the space of \emph{bounded} real-valued continuous functions on the metric space $\mathcal{H}^k$.

\subsection{Construction of local measures}
In the sequel, we will fix $\Omega\subset \R^d$ a bounded open set,
%, or $\Omega= \mathbb{T}^d$. 
and we consider a sequence $\phi_n\in C^\infty(\Omega ; \R)$, 
%with $\|\phi_n\|_{L^2}=1$
 and a sequence $h_n>0$ going to zero.
% such that
%\begin{equation}\label{eq:DefEigen}
%\begin{aligned}
%-h_n^2 \Delta \phi_n = \phi_n\\
%\phi_n|_{\partial \Omega} = 0 ~~\text{ if } \Omega\subset \R^d.
%\end{aligned}
%\end{equation}

We will always suppose that we have, for some $K\in \N$
\begin{equation}\label{eq:CondSobolev}
\|\phi_n\|_{H^K}\leq C_K h_n^{-K}.
\end{equation}

In particular, we will often consider the special case where $\|\phi_n\|^2_{L^2}=\Vol(\Omega)$, and 
\begin{equation}\label{eq:DefEigen}
\begin{cases}
-h_n^2 \Delta \phi_n &= \phi_n\\
\phi_n|_{\partial \Omega} &= 0,
\end{cases}
\end{equation}
which satisfies (\ref{eq:CondSobolev}) for all $K\in \N$.

Let us fix $\chi \in C_c^\infty([0,\infty); [0,1])$ a decreasing function taking value one in a neighbourhood of the origin and vanishing outside $[0,1]$.

\begin{definition}\label{def:Local}
If $x_0\in \Omega$ and $n\in \N$, we define a function $\tilde{\phi}_{x_0,n}\in C_c^\infty(\R^d)$ by
\begin{equation*}
\tilde{\phi}_{x_0,n} (y) :=  \phi_n \big{(} x_0 +h_n y\big{)} \chi \Big{(} \frac{h_n |y|}{d (x_0,\partial \Omega)}\Big{)}.
\end{equation*}
%if $\Omega\subset \R^d$, and 
%\begin{equation*}
%\tilde{\phi}_{x_0,n} (y) :=  \phi_n \big{(} x_0 +h_n y\big{)}
%\end{equation*}
%if $\Omega = \mathbb{T}^d$.
\end{definition} 

\begin{remarque}\label{rem:Torus}
All the constructions and all the proofs of this section also work when $(\phi_n)$ is a sequence of functions on $\T^d$ satisfying (\ref{eq:CondSobolev}), and hence, for eigenfunctions on the torus, by defining the functions $\tilde{\phi}_{x_0,n}\in C_c^\infty(\R^d)$ by
\begin{equation*}
\tilde{\phi}_{x_0,n} (y) :=  \phi_n \big{(} x_0 +h_n y\big{)} \chi(h_n|y|).
\end{equation*}
The proof of Lemma \ref{lem:Tight} is slightly simpler in this case, since we don't have to take the boundary of $\Omega$ into account.
\end{remarque}
 
For each $k\in \N$, $n\in \N$, for each $x_0\in \Omega$, we have $\tilde{\phi}_{x_0,n}\in \mathcal{H}^k$, so that we may define $\delta_{\tilde{\phi}_{x_0,n}} \in \mathcal{M}^k$.
For each $n\in \N$, $k\in \N$ and each Borel set of positive Lebesgue measure $U\subset \Omega$, we then define the \emph{$\mathcal{H}^k$-local measure of $\phi_n$ on $U$} as
\begin{equation}\label{eq:DefLM}
LM_{k,U}(\phi_n) := \frac{1}{\Vol(U)} \int_U \mathrm{d}x_0 \delta_{\tilde{\phi}_{x_0,n}}.
\end{equation} 
This defines a probability measure in $\mathcal{M}^k$.

Taking $\iota_k$ as in (\ref{eq:ExtensionProp}), we also define $LM^{\iota_k}_{k,U}(\phi_n)\in \big{(}\mathcal{C}_b(FP)\big{)}^*$ by
\begin{equation*}
\forall F\in \mathcal{C}_b(FP),~~  \langle LM_{k,U}^{\iota_k}(\phi_n), F\rangle =  \langle LM_{k,U}(\phi_n), \iota_k F\rangle.
\end{equation*}

\subsection{Definition and properties of local weak limits}\label{subsec:DefPropLWL}
\begin{lemme}\label{lem:Tight}
Let $\Omega\subset \R^d$ be a smooth bounded open set, and let $k\in \N$. Let $K_0=K_0(k,d)$ be the smallest integer such that $H^{K_0}(\R^d)$ can be continuously embedded in $C^{k+1}(\R^d)$.

Suppose that $(\phi_n)$ and $(h_n)$ satisfy (\ref{eq:CondSobolev}) for some $K\geq K_0$. Then the sequence $\left(LM_{k,U}(\phi_n)\right)_n$ is tight in $\mathcal{M}^k$, i.e., $\forall \varepsilon>0$, there exists a compact set $K_\varepsilon\subset \mathcal{H}^k$ such that $LM_{k,U}(\phi_n) (\mathcal{H}^k \backslash K_\varepsilon) <\varepsilon$ for all $n\in \N$.
\end{lemme}
\begin{proof}
Let $\varepsilon>0$, and let us write $\Omega_\varepsilon := \{x\in \Omega; \mathrm{d}(x,\partial \Omega)>\varepsilon\}$. We have $\mathrm{Vol}(\Omega\backslash \Omega_\varepsilon) = o_{\varepsilon\rightarrow 0}(1)$.

Recalling that $h_n$ goes to zero, and is hence bounded, for any multiindex $\beta \in \N^d$ and any $\varepsilon>0$, we may find $C_{\beta,\varepsilon}$ such that for all $x\in \Omega_\varepsilon$, all $y\in \R^d$ and all $n\in \N$, we have
\begin{equation}\label{eq:BoundDerivatives}
D^\beta_y \left[ \chi \Big{(} \frac{h_n |y|}{d (x,\partial \Omega)}\Big{)}\right]\leq C_{\beta,\varepsilon} .
\end{equation}
We deduce that for any $\ell >0$,

\begin{equation*}
\begin{aligned}
\int_{\Omega_\varepsilon} \mathrm{d}x \|\tilde{\phi}_{x,n}\|_{H^{K}(B(0,\ell))}^2
&= \sum_{|\alpha|\leq K} \int_{\Omega_\varepsilon}  \int_{B(0,\ell)} \left| D^\alpha_y \left[ \chi \Big{(} \frac{h_n |y|}{d (x,\partial \Omega)}\Big{)} \phi_n(x + h_n y) \right]\right|^2 \mathrm{d}y \mathrm{d}x \\
&\leq C_{\varepsilon,K} \sum_{|\beta|\leq K} \int_{\Omega_\varepsilon}  \int_{B(0,\ell)} h_n^{2|\beta|} \left| \left(D^\beta \phi_n\right)(x + h_n y) \right|^2  \mathrm{d}y \mathrm{d}x ~~\text{ thanks to (\ref{eq:BoundDerivatives})}\\
&\leq C_\varepsilon \mathrm{Vol}(B(0,\ell)) \sum_{|\beta|\leq K} h_n^{2|\beta|} \int_\Omega |D^\beta \phi_n (z)|^2 \mathrm{d}z\\
 &\leq C_\varepsilon' \ell^d,
\end{aligned}
\end{equation*}
thanks to (\ref{eq:CondSobolev}).
To go from the second line to the third, we used Fubini's theorem, and a change of variables.

Using the assumption we made on $K$, we deduce that for any $\ell>0$, $\varepsilon>0$, we may find $C(\ell, \varepsilon)$ such that, for all $n\in \N$, we have
$$\int_{\Omega_\varepsilon} \mathrm{d}x \|\tilde{\phi}_{x,n}\|_{C^{k+1}(B(0,\ell))}^2\leq C(\ell, \varepsilon).$$

Set $a_\varepsilon(\ell):= \left(\varepsilon^{-1} 2^\ell C(\ell, \varepsilon)\right)^{1/2}$. By Markov's inequality, we deduce that for all $n,\ell \in \N$, there exists  $V_\varepsilon(n,\ell)\subset \Omega_\varepsilon$ with $\mathrm{Vol}(V_\varepsilon(n,\ell))<2^{1-\ell} \varepsilon$  for all $x\in \Omega_\varepsilon \backslash V_\varepsilon(n,\ell)$, we have 
$$\|\tilde{\phi}_{x,n}\|_{C^{k+1}(B(0,\ell))}\leq a_\varepsilon(\ell).$$

In particular, if we write $\boldsymbol{a}_\varepsilon = (a_\varepsilon(\ell))_\ell$, then for each $\varepsilon>0$ and each $n\in \N$, we have
\begin{equation}\label{eq:AlmostCompact}
\Vol\big{(} \{x\in \Omega \text{ such that } \tilde{\phi}_{x,n} \notin \mathcal{H}^{k+1} (\boldsymbol{a}_\varepsilon)\}\big{)}\leq \Vol\left(\Omega\backslash \Omega_\varepsilon\right)+\sum_\ell 2^{1-\ell} \varepsilon\leq \Vol(\Omega\backslash \Omega_\varepsilon)+ 4 \varepsilon.
\end{equation}

Therefore, we have, for each $n\in \N$ and each $\varepsilon>0$ that 
$$LM_{k,U}(\phi_n) \big{(}\mathcal{H}^k\backslash \mathcal{H}^{k+1}(\boldsymbol{a}) \big{)} < 4\varepsilon+ \Vol(\Omega\backslash \Omega_\varepsilon).$$
The statement then follows from  Lemma \ref{lem:CompactSubset}.
\end{proof}

As a consequence of Lemma \ref{lem:Tight} and of Prokhorov's theorem, we have
\begin{corolaire}\label{cor:Compactness}
Let $(\phi_n)$ satisfy (\ref{eq:CondSobolev}).
Let $k\in \N$, $U\subset \Omega$ be a Borel set of positive Lebesgue measure.
There exists a subsequence $n_j$ and a probability measure $\mu\in \mathcal{M}^k$ such that $LM_{k,U}(\phi_{n_j})\overset{\ast}{\rightharpoonup} \mu$, i.e.,
 for all $F \in \mathcal{C}_b(\mathcal{H}^k)$, we have
\begin{equation*}
\lim\limits_{n\rightarrow \infty} \langle LM_{k,U}(\phi_{n_j}), F \rangle =  \langle \mu,F\rangle.
\end{equation*}

In particular, there exists $\nu \in \big{(}\mathcal{C}_b(FP)\big{)}^*$ such that for all $F\in \mathcal{C}_b(FP)$, we have
\begin{equation*}
\lim\limits_{n\rightarrow \infty} \langle LM^{\iota_k}_{k,U}(\phi_{n_j}),F  \rangle =  \langle  \nu,F \rangle= \langle  \mu, \iota_k F\rangle .
\end{equation*}
\end{corolaire}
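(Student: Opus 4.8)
The plan is to apply Prokhorov's theorem directly. The first thing I would note is that $\mathcal{H}_d^k$ is a Polish space (established in the proposition above) and that $\mathcal{C}_d^k$ is exactly its space of bounded continuous functions; consequently the convergence $\overset{\ast}{\rightharpoonup}$ in the statement — convergence of $\langle\,\cdot\,,\Lambda\rangle$ against every $\Lambda\in\mathcal{C}_d^k$ — is nothing but weak convergence of Borel measures on the Polish space $\mathcal{H}_d^k$ (this is a mild abuse of the notation $\overset{\ast}{\rightharpoonup}$, since $\mathcal{M}_d^k$ is in general a proper subspace of $\big(\mathcal{C}_d^k\big)^*$, but it causes no difficulty here). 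By Lemma \ref{lem:Tight}, $(LM_{k,U}(\phi_n))_n$ is a tight family of probability measures on $\mathcal{H}_d^k$, so Prokhorov's theorem yields a subsequence $(n_j)$ and a Borel probability measure $\mu$ on $\mathcal{H}_d^k$ — equivalently, $\mu\in\mathcal{M}_d^k$ with $\mu\geq 0$ — such that $\langle LM_{k,U}(\phi_{n_j}),\Lambda\rangle\to\langle\mu,\Lambda\rangle$ for all $\Lambda\in\mathcal{C}_d^k$. Testing against the constant function $1\in\mathcal{C}_d^k$ shows $\mu(\mathcal{H}_d^k)=1$, so $\mu$ is indeed a probability measure.

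For the second assertion, I would set $\langle\nu,F\rangle:=\langle\mu,\iota_k F\rangle$ for $F\in\mathcal{C}_b(FP(d))$; this defines an element $\nu\in\big(\mathcal{C}_b(FP(d))\big)^*$ since $\iota_k F\in\mathcal{C}_d^k$ and $\mu\in\mathcal{M}_d^k$, since $\iota_k$ is linear, and since the norm identity in (\ref{eq:ExtensionProp}) gives $|\langle\nu,F\rangle|\leq\|\mu\|_{\mathcal{M}_d^k}\|\iota_k F\|_{\mathcal{C}_d^k}=\|\mu\|_{\mathcal{M}_d^k}\|F\|_{\mathcal{C}_b(FP(d))}$. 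Then, unwinding the definition of $LM^{\iota_k}_{k,U}(\phi_n)$, I would write $\langle LM^{\iota_k}_{k,U}(\phi_{n_j}),F\rangle=\langle LM_{k,U}(\phi_{n_j}),\iota_k F\rangle$ and apply the convergence from the first part to the particular test function $\Lambda=\iota_k F\in\mathcal{C}_d^k$, obtaining $\langle LM^{\iota_k}_{k,U}(\phi_{n_j}),F\rangle\to\langle\mu,\iota_k F\rangle=\langle\nu,F\rangle$, which is the claimed statement.

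Given Lemma \ref{lem:Tight}, the whole argument is essentially bookkeeping. The one point requiring a little care is the identification of the dual-space formulation (convergence $\overset{\ast}{\rightharpoonup}$ in $\mathcal{M}_d^k$, as the statement is phrased) with the measure-theoretic formulation of Prokhorov's theorem (relative compactness of tight families for the topology of weak convergence of probability measures on a Polish space), together with the verification that the limit has full mass $1$ — which is exactly where tightness, as opposed to mere boundedness in total variation norm, enters. I do not expect any genuine obstacle beyond this.
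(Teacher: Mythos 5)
Your proposal is correct and follows essentially the same route as the paper, which derives the corollary directly from Lemma \ref{lem:Tight} and Prokhorov's theorem without further elaboration. You have simply filled in the bookkeeping that the paper leaves implicit (the identification of $\overset{\ast}{\rightharpoonup}$-convergence with weak convergence of Borel probability measures on the Polish space $\mathcal{H}_d^k$, the full-mass check via the constant function, and the unwinding of the definition of $LM^{\iota_k}_{k,U}$), and all of it is sound.
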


\begin{definition}
We will denote by $\sigma_{k,U}\left((\phi_n)_n\right)$ the set of accumulation points of $LM_{k,U}(\phi_n)$ for the weak-* topology, and by $\sigma^{\iota_k}_{k,U}(\phi_n)$ the set of accumulation points of $LM^{\iota_k}_{k,U}\left((\phi_n)_n\right)$ for the weak-* topology.
\end{definition}

From now on, we will only consider the case when $(\phi_n)$ satisfies (\ref{eq:DefEigen}).
We shall see in Corollary \ref{cor:Intrinsic} below that $\sigma^{\iota_k}_{k,U}((\phi_n)_n)\subset \mathcal{M}$, and that this set does not depend on $k$ and on $\iota_k$. First of all we will show that local weak limits of eigenfunctions are always supported on $FP$.

\begin{lemme}
Let $(\phi_n)$ satisfy (\ref{eq:DefEigen}).
Let $U\subset \Omega$, $k\in \N$ and let $\mu\in \sigma_{k,U}((\phi_n)_n)$. Then $\mu$ is supported on $FP$.
\end{lemme}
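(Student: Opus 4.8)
The plan is to show that any weak-$*$ accumulation point $\mu$ of the local measures $LM_{k,U}(\phi_n)$ must put all its mass on $FP(d)$, by exhibiting $FP(d)$ as the zero set of a suitable continuous functional and using portmanteau-type arguments. First I would recall from (\ref{eq:AlmostEigen}) that for each fixed $x_0\in U$ the rescaled bump $\tilde\phi_{x_0,n}$ is an \emph{approximate} free eigenfunction: $\boldsymbol{d}_k\big((-\Delta-1)\tilde\phi_{x_0,n},0\big)=O_{x_0}(h_n)$. The key point is to make the error uniform enough in $x_0$ (on $U$ away from $\partial\Omega$) so that it disappears after integration; the estimates inside the proof of Lemma~\ref{lem:Tight}, especially (\ref{eq:AlmostEigenvlaue}) and the $L^2$ bound $\int_\Omega |\phi_n(x_0+h_ny)|^2\,\mathrm{d}x_0\le 1$, are exactly what I would re-use, applied to $(-\Delta-1)\tilde\phi_{x_0,n}$ instead of $\tilde\phi_{x_0,n}$ itself.

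Concretely, I would fix $\ell\in\N$ and $\varepsilon>0$ and consider the continuous functional $G_{\ell}:\mathcal{H}_d^{k}\to[0,\infty)$, $G_\ell(f):=\min\big(1,\|(-\Delta-1)f\|_{H^{k-2}(B(0,\ell))}\big)$ — continuity holds because $-\Delta-1:\mathcal{H}_d^k\to\mathcal{H}_d^{k-2}$ is continuous and the norm on $B(0,\ell)$ is continuous on $\mathcal{H}_d^{k-2}$. Writing $\eta_n(x_0):=\|(-\Delta-1)\tilde\phi_{x_0,n}\|^2_{H^{k-2}(B(0,\ell))}$, the computation from Lemma~\ref{lem:Tight} (replacing $\tilde\phi$ by $(-\Delta-1)\tilde\phi$ and using (\ref{eq:AlmostEigenvlaue}) together with $-h_n^2\Delta\phi_n=\phi_n$) gives $\int_{\Omega\setminus\Omega_\varepsilon}\eta_n(x_0)\,\mathrm{d}x_0\le C(k,\ell)\,h_n^{2}$, since the defect $(-\Delta-1)\tilde\phi_{x_0,n}$ is, for $n$ large and $x_0$ away from the boundary, equal to $-h_n^2\Delta\tilde\phi_{x_0,n}-\tilde\phi_{x_0,n}$ evaluated in rescaled coordinates, which after the $h_n^{4j}$ bookkeeping carries an extra factor of $h_n^{2}$ compared to the $L^2$-norm bound. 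Hence $\int_U G_\ell(\tilde\phi_{x_0,n})\,\mathrm{d}x_0 \le \mathrm{Vol}(\Omega_\varepsilon)/\mathrm{Vol}(U) + C(k,\ell)h_n$, so $\langle LM_{k,U}(\phi_n),G_\ell\rangle\to 0$ as $n\to\infty$ for each fixed $\ell$ (letting $\varepsilon\to0$ afterwards since $\mathrm{Vol}(\Omega_\varepsilon)\to0$). Passing to the subsequence defining $\mu$ and using weak-$*$ convergence against the bounded continuous $G_\ell$, I get $\langle\mu,G_\ell\rangle=0$, i.e. $\mu\big(\{f: \|(-\Delta-1)f\|_{H^{k-2}(B(0,\ell))}>0\}\big)=0$ for every $\ell$.

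Finally I would take the union over $\ell\in\N$: the set $\{f\in\mathcal{H}_d^k: (-\Delta-1)f\neq 0 \text{ in } H^{k-2}_{loc}\}$ is the countable union over $\ell$ of the sets above, hence has $\mu$-measure zero, so $\mu$ is supported on $\{f: (-\Delta-1)f=0\}$. Since a distributional solution of $(-\Delta-1)f=0$ is smooth by elliptic regularity, this support is exactly $FP(d)$, which is moreover closed in $\mathcal{H}_d^k$ by the earlier lemma, so carrying the measure is legitimate. The main obstacle I anticipate is the uniformity of the defect estimate in $x_0$: near $\partial\Omega$ the cutoff $\chi(h_n|y|/d(x_0,\partial\Omega))$ in Definition~\ref{def:Local} introduces extra terms that are \emph{not} $O(h_n)$ pointwise, so one must either restrict to $x_0\in U\setminus\Omega_\varepsilon$ and send $\varepsilon\to0$ (using $\mathrm{Vol}(\Omega_\varepsilon)\to0$ exactly as in Lemma~\ref{lem:Tight}), or argue that the cutoff is inactive on $B(0,\ell)$ once $n$ is large for fixed $x_0$; the former is cleaner and is the route I would take. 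The torus case is strictly easier since there is no cutoff at all.
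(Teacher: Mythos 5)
Your proof is correct, but it takes a genuinely different route from the paper's. The paper argues by contraposition and inner regularity: given a compact $K$ disjoint from $FP(d)$, it uses compactness plus continuity of $v\mapsto \boldsymbol{d}_{k-2}(v,-\Delta v)$ to get a uniform lower bound $c>0$ on the defect over $K$, builds an open $c/3$-neighbourhood $V$ of $K$, shows $LM_{k,U}(\phi_n)(V)\to 0$ by pointwise vanishing for each $x_0$ and dominated convergence, sandwiches with a continuous bump, and finishes by Ulam's theorem. You instead test directly against the bounded continuous functional $G_\ell(f)=\min\bigl(1,\|(-\Delta-1)f\|_{H^{k-2}(B(0,\ell))}\bigr)$, show $\langle LM_{k,U}(\phi_n),G_\ell\rangle\to 0$ by integrating the defect over $U\setminus\Omega_\varepsilon$ and sending $\varepsilon\to 0$, deduce $\langle\mu,G_\ell\rangle=0$ and hence $\mu(\{G_\ell>0\})=0$, and take a countable union over $\ell$. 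This is arguably cleaner: it avoids the compact exhaustion and the $\varepsilon$-neighbourhood construction, replacing them with a single Markov-type step applied to an explicit nonnegative test functional, and it automatically handles the passage from ``all compacta disjoint from $FP(d)$'' to ``the full complement of $FP(d)$'' via countable subadditivity. One small quantitative remark: on $\Omega\setminus\Omega_\varepsilon$ and for $n$ large enough relative to $\varepsilon$ and $\ell$, the cutoff $\chi(h_n|y|/d(x_0,\partial\Omega))$ is identically $1$ on $B(0,\ell)$, so there $(-\Delta-1)\tilde\phi_{x_0,n}$ vanishes \emph{exactly} (since $\Delta_y\tilde\phi_{x_0,n}(y)=h_n^2(\Delta\phi_n)(x_0+h_n y)=-\tilde\phi_{x_0,n}(y)$); the defect is $0$, not merely $O(h_n^2)$, and the $h_n^{4j}$ bookkeeping you invoke is not needed. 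The only genuine contribution to $\int_U G_\ell$ comes from the boundary strip $\Omega_\varepsilon$, which you correctly control by volume. That slip does not affect the validity of the argument, and you correctly anticipated and handled the real issue (the cutoff near $\partial\Omega$).
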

\begin{proof}
Since $(\phi_n)$ satisfies (\ref{eq:DefEigen}), we have
%since
%$$\left((-\Delta-1)\tilde{\phi}_{n,x_0}\right)(y) = -2 \frac{h_n}{d (x_0,\partial \Omega)}  \left(\nabla \tilde{\phi}_{n,x_0}(y) \right)\cdot \left((\nabla\chi)\Big{(} \frac{h_n |y|}{d (x_0,\partial \Omega)}\Big{)}\right) -  \tilde{\phi}_{n,x_0}(y)  \frac{h^2_n}{d^2 (x_0,\partial \Omega)} \Delta \chi\Big{(} \frac{h_n |y|}{d (x_0,\partial \Omega)}\Big{)},$$
we have
$$\|(-\Delta-1)\tilde{\phi}_{n,x_0}\|_{C^k(B(0,r))}\leq C\left(r, \mathrm{d}(x_0,\partial \Omega)\right) h_n \|\tilde{\phi}_{n,x_0}\|_{C^{k+1}(B(0,r))}.$$

Thanks to (\ref{eq:AlmostCompact}), we deduce that, for any $\varepsilon>0$, we can find $C_\varepsilon$ such that
\begin{equation}\label{eq:SmallVolumeBadPoints}
\Vol\big{(} \{x_0\in \Omega \text{ such that } \boldsymbol{d}_k\big{(}(-\Delta-1)\tilde{\phi}_{n,x_0}, 0 \big{)} \geq C_\varepsilon h_n \}\big{)}\leq  \varepsilon.
\end{equation}

Let $A$ be a Borel set which does not intersect $FP$. By Ulam's Theorem (\cite[Theorem 7.1.4]{dud}), any measure on a Polish space is regular, so that 
\begin{equation}\label{eq:InnerReg}
\mu(A) = \sup \{ \mu(K); ~ K \text{ compact }, K\subset A\}.
\end{equation}
Let us take a set $K\subset A$, compact for the $\mathcal{H}^k$ topology.
For each $v\in K$, we have $\boldsymbol{d}_{k-2}(v, -\Delta v) >0$. $v\mapsto \boldsymbol{d}_{k-2}(v, -\Delta v)$ is continuous for the $\mathcal{H}^k$ topology, so by compactness, we may find $c>0$ such that for all $v\in K$, we have $\boldsymbol{d}_{k-2}(v, -\Delta v) \geq c$.

Suppose that $x_0\in \Omega$ is such that $ \boldsymbol{d}_{k-2}\big{(}(-\Delta-1)\tilde{\phi}_{n,x_0}, 0 \big{)}\leq \frac{c}{3}$. Then we have for any $v\in K$
\begin{equation*}
\begin{aligned}
\boldsymbol{d}_k (\tilde{\phi}_{x_0,n},v) &\geq \boldsymbol{d}_{k-2} \big{(} -\Delta \tilde{\phi}_{x_0,n},-\Delta v \big{)}\\
&\geq \boldsymbol{d}_{k-2} \big{(}\tilde{\phi}_{x_0,n},-\Delta v \big{)} - \frac{c}{3}.
\end{aligned}
\end{equation*}

Therefore, we must have $\boldsymbol{d}_k (\tilde{\phi}_{x_0,n},v) \geq \frac{c}{3}$, so that $\boldsymbol{d}_k (\tilde{\phi}_{x_0,n}, K) \geq \frac{c}{3}$.

Thus, we have just shown that for any $x_0\in \Omega$,
\begin{equation}
\left[\boldsymbol{d}_{k-2}\big{(}(-\Delta-1)\tilde{\phi}_{n,x_0}, 0 \big{)}\leq \frac{c}{3} \right] \Longrightarrow \boldsymbol{d}_k (\tilde{\phi}_{x_0,n}, K) \geq \frac{c}{3}.
\end{equation}

In particular,
$$\Vol\left( \{x_0\in \Omega \text{ such that } \boldsymbol{d}_k (\tilde{\phi}_{x_0,n}, K) \leq \frac{c}{3} \}\right) \leq \Vol\big{(} \{x_0\in \Omega \text{ such that } \boldsymbol{d}_k\big{(}(-\Delta-1)\tilde{\phi}_{n,x_0}, 0 \big{)} \geq  \frac{c}{3} \}\big{)},$$
which goes to zero as $n\to \infty$ thanks to  (\ref{eq:SmallVolumeBadPoints}).

Let $V:= \{f\in \mathcal{H}^k; \boldsymbol{d}_k(f, K)< \frac{c}{3}\}$, which is an open set. From what precedes, we have  $LM_{k,U}(\phi_n)(V) \longrightarrow 0$. 
Let $\chi : \mathcal{H}^k\rightarrow [0,1]$ be a continuous function taking value 1 on $K$ and vanishing outside $V$. We have $\langle LM_{k,U}(\phi_n), \chi \rangle \longrightarrow 0$, so that $\mu(K)\leq \langle\mu, \chi \rangle =0$. The result then follows from (\ref{eq:InnerReg}).
\end{proof}
 
\begin{corolaire}\label{cor:Intrinsic}
Let $(\phi_n)$ satisfy (\ref{eq:DefEigen}), and let $\nu\in \sigma^{\iota_k}_{k,U}((\phi_n)_n)$. Then $\nu\in \mathcal{M}$, and $\nu$ is independent of $k$ and $\iota_k$. In other words, if $F\in \mathcal{C}_b(FP)$, $k_1,k_2\in \N$ and $\iota_{k_1}, \iota_{k_2}$ satisfy (\ref{eq:ExtensionProp}), we have that $\langle LM_{k_1,U}^{\iota_{k_1}}, F\rangle$ converges if and only if $\langle LM_{k_2,U}^{\iota_{k_2}}, F \rangle$ converges, and if this is the case, their limits are then equal.
\end{corolaire}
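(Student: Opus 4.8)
The plan is to reduce everything to the previous lemma, which tells us that every accumulation point $\mu\in\sigma_{k,U}(\phi_n)$ of the measures $LM_{k,U}(\phi_n)$ in $\mathcal{M}_d^k$ is supported on $FP(d)$. Fix $F\in\mathcal{C}_b(FP(d))$ and $k\in\N$, together with an extension operator $\iota_k$ satisfying (\ref{eq:ExtensionProp}). First I would observe that $\langle LM^{\iota_k}_{k,U}(\phi_n),F\rangle=\langle LM_{k,U}(\phi_n),\iota_k F\rangle$ by definition, and $\iota_k F\in\mathcal{C}_d^k$ is a bounded continuous function on $\mathcal{H}_d^k$, so this is just testing the probability measures $LM_{k,U}(\phi_n)$ against a fixed element of $\mathcal{C}_d^k$. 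If a subsequence $LM^{\iota_k}_{k,U}(\phi_{n_j})$ converges weak-$*$ in $(\mathcal{C}_b(FP(d)))^*$, then by Lemma \ref{lem:Tight} and Prokhorov we may pass to a further subsequence along which $LM_{k,U}(\phi_{n_j})\overset{\ast}{\rightharpoonup}\mu$ for some probability measure $\mu\in\mathcal{M}_d^k$; by the preceding lemma $\mu$ is supported on $FP(d)$, hence $\mu\in\mathcal{M}_d$, and $\langle\nu,F\rangle=\langle\mu,\iota_kF\rangle=\int_{FP(d)}\iota_kF\,\mathrm{d}\mu=\int_{FP(d)}F\,\mathrm{d}\mu=\langle\mu,F\rangle$ using $\iota_kF|_{FP(d)}=F$. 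This already shows $\nu\in\mathcal{M}_d$; it is a genuine (positive) probability measure on $FP(d)$, not merely an element of the bidual.

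Next I would address the independence statement. Suppose $k_1,k_2\in\N$ and $\iota_{k_1},\iota_{k_2}$ are two extension operators. Take a subsequence $n_j$ along which $\langle LM^{\iota_{k_1}}_{k_1,U}(\phi_{n_j}),F\rangle$ converges; by the argument above, after passing to a further subsequence we may assume $LM_{k_1,U}(\phi_{n_j})\overset{\ast}{\rightharpoonup}\mu_1$ in $\mathcal{M}_d^{k_1}$ with $\mu_1$ supported on $FP(d)$, and the limit equals $\int_{FP(d)}F\,\mathrm{d}\mu_1$. The point is that $\mu_1$, viewed as a measure on $FP(d)$, does not depend on which bounded continuous observable we used: it is determined by the convergence $LM_{k_1,U}(\phi_{n_j})\overset{\ast}{\rightharpoonup}\mu_1$ against \emph{all} of $\mathcal{C}_d^{k_1}$. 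To compare with $k_2$, I would use that the natural "forgetful" map $\mathcal{H}_d^{k_1}\to\mathcal{H}_d^{k_2}$ is continuous when $k_1\ge k_2$ (and, restricted to $FP(d)$, is the identity, since on $FP(d)$ all the $\boldsymbol{d}_k$ induce the same topology, as noted after the lemma on $FP(d)$ being closed). Along the same subsequence $n_j$, the measures $LM_{k_2,U}(\phi_{n_j})$ are the pushforwards of $LM_{k_1,U}(\phi_{n_j})$ under this map (indeed both are $\frac{\Vol(\Omega)}{\Vol(U)}\int_U\delta_{\tilde\phi_{x_0,n}}\,\mathrm{d}x_0$ with the same integrand, only the ambient space changing), so by continuity of pushforward under weak-$*$ convergence, $LM_{k_2,U}(\phi_{n_j})\overset{\ast}{\rightharpoonup}\mu_2$ where $\mu_2$ is the pushforward of $\mu_1$; but since $\mu_1$ lives on $FP(d)$ and the map is the identity there, $\mu_2=\mu_1$ as measures on $FP(d)$. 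Hence $\langle LM^{\iota_{k_2}}_{k_2,U}(\phi_{n_j}),F\rangle=\int_{FP(d)}F\,\mathrm{d}\mu_2=\int_{FP(d)}F\,\mathrm{d}\mu_1$ converges to the same limit. A routine subsequence-of-subsequence argument (if a bounded sequence of reals has all convergent subsequences sharing one limit, it converges to that limit) upgrades "along $n_j$" to full convergence and removes the asymmetry between $k_1$ and $k_2$, giving the "if and only if" with equal limits; when $k_1<k_2$ one simply swaps the roles, and the general case follows by inserting $\max(k_1,k_2)$ as an intermediary.

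The main obstacle I anticipate is not any single estimate but making precise the claim that "the limit measure is intrinsic to $FP(d)$", i.e. that the weak-$*$ limit $\mu$ in $\mathcal{M}_d^k$, once we know it is carried by $FP(d)$, is genuinely independent of $k$ and can be identified across the different Polish spaces $\mathcal{H}_d^k$. This requires being careful that (i) $FP(d)\subset\mathcal{H}_d^k$ carries the same Borel structure and topology for every $k$, which is exactly the content of the remark following the lemma that $FP(d)$ is closed; (ii) the canonical inclusions between the $\mathcal{H}_d^k$ restrict to the identity on $FP(d)$ and are Borel (indeed continuous), so that pushforward commutes with the weak-$*$ limits obtained from Prokhorov; and (iii) the functions $\iota_kF$ agree with $F$ on $FP(d)$, so the value $\int F\,\mathrm{d}\mu$ never sees the ambiguity in the Tietze extension off $FP(d)$ — this is where property (\ref{eq:ExtensionProp}) is used essentially. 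Once these bookkeeping points are set up cleanly, the convergence statements and their equality are immediate.
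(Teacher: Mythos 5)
Your proposal is correct. The first half — extracting a weak-$*$ limit $\mu\in\mathcal{M}_d^k$ via Lemma \ref{lem:Tight} and Prokhorov, invoking the preceding lemma to see that $\mu$ is carried by $FP(d)$, and using $\iota_kF|_{FP(d)}=F$ to conclude $\nu=\mu_d\in\mathcal{M}_d$ and $\iota_k$-independence — matches the paper's argument exactly. For $k$-independence, your mechanism is dual to the paper's rather than identical to it: you work on the measure side, noting that $LM_{k_2,U}(\phi_n)$ is the pushforward of $LM_{k_1,U}(\phi_n)$ under the continuous inclusion $\mathcal{H}_d^{k_1}\hookrightarrow\mathcal{H}_d^{k_2}$ (for $k_1\ge k_2$), which restricts to the identity on $FP(d)$, and then close with a subsequence-of-subsequence argument. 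The paper instead works on the function side: for $k_2\ge k_1$ it re-reads $\iota_{k_1}$ (by restricting the codomain functions to the finer space $\mathcal{H}_d^{k_2}$) as a valid extension operator satisfying (\ref{eq:ExtensionProp}) into $\mathcal{C}_d^{k_2}$, so that $\langle LM_{k_1,U}^{\iota_{k_1}},F\rangle$ and $\langle LM_{k_2,U}^{\iota_{k_1}},F\rangle$ are literally the same scalar sequence, and then appeals to the already-established $\iota_k$-independence, avoiding a second Prokhorov extraction. Both routes rest on the same compatibility facts — that the maps $x_0\mapsto\tilde\phi_{x_0,n}$ land in $C^\infty$, that the $\boldsymbol{d}_k$ induce the same topology on $FP(d)$, and that the Tietze extensions are irrelevant off $FP(d)$ — so this is essentially the same proof, with the $k$-independence step phrased in the measure language rather than the function language; the paper's reduction is marginally slicker, yours a bit more self-contained.
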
 
\begin{proof}
Let $\nu\in \sigma^{\iota_k}_{k,U}((\phi_n)_n)$. By Corollary \ref{cor:Compactness}, there exists $\mu \in \sigma_{k,U}((\phi_n)_n)$ such that for all $F\in \mathcal{C}(FP)$, $\langle \nu, F \rangle = \langle  \mu, \iota_k F \rangle$.

Since $FP$ is closed, we may define a measure $\mu$ on $FP$ as the restriction of the measure $\mu$. Let $F\in \mathcal{C}_b(FP)$. We have 
\begin{align*}
\langle \nu,F \rangle &= \int_{\mathcal{H}^k} \iota_k F \mathrm{d}\mu \\
&= \int_{FP} \iota_k F \mathrm{d}\mu + \int_{\mathcal{H}^k\backslash FP} \iota_k F \mathrm{d}\mu\\
&= \int_{FP} F \mathrm{d}\mu \\
&= \langle \mu,F \rangle.
\end{align*}

Therefore, $\nu= \mu$, so that $\nu\in \mathcal{M}$ and $\nu$ does not depend on $\iota_k$.

 Let us show that $\nu$ does not depend on $k$.
Let $F\in \mathcal{C}_b(FP)$, $k_1,k_2\in \N$ and $\iota_{k_1}, \iota_{k_2}$ satisfy (\ref{eq:ExtensionProp}). Suppose that $\langle LM_{k_1,U}^{\iota_{k_1}}, F \rangle$ converges.
Suppose first that $k_2\geq k_1$. Then, since $\mathcal{C}_b(\mathcal{H}^{k_1})\subset \mathcal{C}_b(\mathcal{H}^{k_2})$,  $\iota_{k_1}$ can be seen as a map from $\mathcal{C}(FP)$ to $\mathcal{C}_b(\mathcal{H}^{k_2})$ satisfying (\ref{eq:ExtensionProp}). Since the convergence of $\langle LM_{k_1,U}^{\iota_{k_2}}, F \rangle$ does not depend on the choice of the map $\iota_{k_2}:\mathcal{C}(FP) \rightarrow  \mathcal{C}_b(\mathcal{H}^{k_2})$ satisfying (\ref{eq:ExtensionProp}), we deduce that $\langle LM_{k_2,U}^{\iota_{k_2}}, F \rangle$ converges.

Suppose now that $k_2\leq k_1$. Then, since $\mathcal{C}_b(\mathcal{H}^{k_2})\subset \mathcal{C}_b(\mathcal{H}^{k_1})$, $\iota_{k_2}$ can be seen as a map from $\mathcal{C}(FP)$ to $\mathcal{C}_b(\mathcal{H}^{k_1})$ satisfying (\ref{eq:ExtensionProp}). Since the convergence of $\langle LM_{k_1,U}^{\iota_{k_1}}, F \rangle$ does not depend on the choice of the map $\iota_{k_1}:\mathcal{C}(FP) \rightarrow  \mathcal{C}_b(\mathcal{H}^{k_1})$ satisfying (\ref{eq:ExtensionProp}),  we deduce that $\langle LM_{k_2,U}^{\iota_{k_2}}, F \rangle$ converges.
\end{proof}

\begin{definition}
If the conclusions of Corollary \ref{cor:Intrinsic} are satisfied, we will say that $\nu$ is \emph{a local weak limit of $(\phi_n)_n$ on $U$}, and write $\nu\in \sigma_U((\phi_n)_n)$. If $\sigma_U((\phi_n)_n) = \{\nu\}$, we will say that $\nu$ is \emph{the local weak limit of $(\phi_n)_n$ on $U$}.
\end{definition}

We now give examples of sequences of eigenfunctions whose local weak limits can easily be computed.

\subsection{Two simple examples}\label{subsec:Examples}
\subsubsection*{A plane wave on the torus}
Let $\xi_1 = (1,0,...,0)\in \R^d$.
On $\mathbb{T}^d$, consider $\phi_n(x):= \cos(n x\cdot \xi_1)$. We have $-\Delta \phi_n = n^2 \phi_n$, and, if $x\in \mathbb{T}^d$ and $y=(y_1,...,y_d)\in \R^d$, $|y|<1$, we have
$$\phi_{x,n}(y) = \cos (n x\cdot \xi_1 + y\cdot \xi_1).$$
 
For  $\theta\in [0, 2\pi)$, we shall write $f_\theta(y):= \cos (\theta + y\cdot \xi_1)$.
 
\begin{lemme}
For any Borel set $U\subset \T^d$  of positive Lebesgue measure, we have $\sigma_U((\phi_n)_n)= \{\mu_{\xi_1}\}$, where
$$\mu_{\xi_1}:= \frac{1}{2\pi}\int_0^{2\pi} \delta_{f_\theta} \mathrm{d}\theta.$$
\end{lemme}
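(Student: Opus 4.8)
The plan is to show that the local measures $LM_{k,U}(\phi_n)$ converge weakly-$*$ to (the image under $\iota_k$ of) $\mu_{\xi_1}$; by the uniqueness statement of Corollary~\ref{cor:Intrinsic} this identifies $\sigma_U(\phi_n)$. First I would observe that, since $\phi_n(x) = \cos(nx\cdot\xi_1)$ and $h_n = 1/n$, the rescaled functions are \emph{exactly} translates of elements of $FP(d)$: for $x_0 \in \T^d$ we have $\tilde\phi_{x_0,n}(y) = \cos(nx_0\cdot\xi_1 + y\cdot\xi_1) = f_{\theta_n(x_0)}(y)$ where $\theta_n(x_0) := nx_0\cdot\xi_1 \bmod 2\pi$. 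In particular there is no error term here at all (the torus case avoids the cutoff $\chi$), so $LM_{k,U}(\phi_n) = \frac{\Vol(\T^d)}{\Vol(U)}\int_U \delta_{f_{\theta_n(x_0)}}\,\mathrm{d}x_0$ is literally the pushforward of normalized Lebesgue measure on $U$ under the map $x_0 \mapsto f_{\theta_n(x_0)}$.

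Next I would reduce to an equidistribution statement. The map $\theta \mapsto f_\theta$ from $[0,2\pi)$ (or $\R/2\pi\Z$) into $FP(d) \subset \mathcal{H}_d^k$ is continuous and injective, so for any $F \in \mathcal{C}_b(FP(d))$ the composition $g := F \circ f_{\bullet}$ is a bounded continuous $2\pi$-periodic function on $\R$. Thus $\langle LM_{k,U}(\phi_n), \iota_k F\rangle = \frac{\Vol(\T^d)}{\Vol(U)}\int_U g(nx_0\cdot\xi_1)\,\mathrm{d}x_0$, and I need to show this converges to $\frac{1}{2\pi}\int_0^{2\pi} g(\theta)\,\mathrm{d}\theta$, which is exactly $\langle \mu_{\xi_1}, F\rangle$. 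Since $x_0 \cdot \xi_1$ is just the first coordinate $x_0^{(1)}$, Fubini reduces the integral over $U$ to a one-dimensional integral, and the claim becomes: for (a.e.) fixed values of the remaining coordinates, $\int (\text{slice}) g(n t)\,\mathrm{d}t \to \frac{|\text{slice}|}{2\pi}\int_0^{2\pi} g$. This is the standard fact that $g(n\,\cdot\,) \rightharpoonup \frac{1}{2\pi}\int_0^{2\pi} g$ weakly-$*$ in $L^\infty$ as $n \to \infty$ for a periodic $g$ (e.g. by approximating $g$ in $L^1(\R/2\pi\Z)$ by trigonometric polynomials and using Riemann--Lebesgue on each frequency, or directly via a Riemann-sum/change-of-variables estimate), then integrating over the transverse coordinates by dominated convergence.

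The main (mild) obstacle is purely bookkeeping: making sure the reduction of $\int_U g(nx_0\cdot\xi_1)\,\mathrm{d}x_0$ to a clean one-dimensional periodic averaging is carried out correctly when $U$ is an arbitrary open set, where the one-dimensional slices $U \cap \{x_0^{(1)} = t\}$-complement, i.e. the slices of $U$ in the $\xi_1$ direction, need not be intervals; one handles this by writing each slice as a countable disjoint union of intervals and summing, the error on each interval being $O(1/n)$ uniformly, and controlling the tail by $\varepsilon$. Once the weak-$*$ convergence of $LM_{k,U}(\phi_n)$ to $(\iota_k)_*\mu_{\xi_1}$ is established, Corollary~\ref{cor:Intrinsic} and the definition of $\sigma_U$ give $\sigma_U(\phi_n) = \{\mu_{\xi_1}\}$ directly, and one checks $\mu_{\xi_1}$ is a probability measure supported on $FP(d)$ since each $f_\theta$ solves $-\Delta f_\theta = f_\theta$.
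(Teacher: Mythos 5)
Your proposal is correct and follows essentially the same route as the paper's proof: both observe that $\tilde\phi_{x_0,n}$ is exactly $f_{n x_0 \cdot \xi_1}$ with no error term, then reduce via Fubini to one-dimensional equidistribution of $g(n\cdot)$ against the slices of $U$, which is proved by Fourier expansion of $g$ plus the Riemann--Lebesgue lemma (the paper's ``Lebesgue's lemma'') applied to the indicator of the slice. Your added care about slices of an arbitrary open set not being intervals is a reasonable way to fill in what the paper calls ``straightforward to check,'' though the cleanest version is simply that $\int_{\T^1} g(nt)\,h(t)\,\mathrm{d}t \to \widehat{g}(0)\widehat{h}(0)$ holds for any $h\in L^1(\T^1)$, including $h=\mathbf{1}_{U_{x_2,\dots,x_d}}$, so no interval decomposition is needed.
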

\begin{proof}
Let $U\subset \mathbb{T}^d$, let $k\in \N$ and let $F \in \mathcal{C}_b(\mathcal{H}^k)$. We have
\begin{equation*}
\begin{aligned}
\langle LM_{k,U},F\rangle &= \frac{1}{\mathrm{Vol}(U)}\int_U F(f_{nx\cdot \xi_1}) \mathrm{d}x\\
&= \frac{1}{\mathrm{Vol}(U)}\int_U F_{\xi_1}(nx) \mathrm{d}x,
\end{aligned}
\end{equation*}
where $F_{\xi_1}(x) := F(f_{x\cdot \xi_1})$ is a continuous function of $x\in \mathbb{T}^d$. Fix $(x_2,...,x_d)\in \mathbb{T}^{d-1}$, and write $U_{x_2,...,x_d}:= \{x_1; (x_1,...,x_d)\in U\}$. By expanding $F_{\xi_1}(\cdot, x_2,...,x_d)$ in Fourier modes and using Lebesgue's lemma, it is straightforward to check that 
\begin{equation*}
\begin{aligned}
\int_{U_{x_2,...,x_d}} F_{\xi_1}(nx) \mathrm{d}x_1 \longrightarrow \mathrm{Vol}(U_{x_2,...,x_d}) \int_{\mathbb{T}^1} F_{\xi_1}(x_1) \mathrm{d} x_1.
\end{aligned}
\end{equation*}
Integrating over $(x_2,...,x_d)$, the lemma follows.
\end{proof}

\subsubsection*{Concentrating eigenfunctions}
Consider $(\phi_n)_n$ a normalized sequence of eigenfunctions of the Dirichlet Laplacian in a domain $\Omega\subset \R^d$, which concentrates on a set of volume zero in the sense that
\begin{equation}\label{eq:Concentration}
\forall \varepsilon>0, \exists U_\varepsilon \subset \Omega \text{ open, with } \Vol(\Omega\backslash U_\varepsilon)< \varepsilon \text{ and } \lim\limits_{n\rightarrow \infty} \int_{U_\varepsilon} |\phi_n|^2(x) \mathrm{d}x = 0.
\end{equation}

Examples of sequences of eigenfunctions satisfying (\ref{eq:Concentration}) can easily be constructed in the disk (the so called "whispering gallery modes"): see \cite[\S 1.3]{anantharaman2016wigner} and the references given therein.

\begin{proposition}\label{prop:Concentration}
If $(\phi_n)_n$ satisfies (\ref{eq:Concentration}), then for any Borel set $U\subset \Omega$ of positive Lebesgue measure , we have $\sigma_U((\phi_n)_n) = \{\delta_0\}$.
\end{proposition}
\begin{proof}
Let $U\subset \Omega$, let $F\in \mathcal{C}(\mathcal{H}^0)$ and let $\varepsilon>0$. By continuity, there exists $\eta>0$ such that $\forall g\in \mathcal{K}$, $\boldsymbol{d}_0(g,0)<\eta \Longrightarrow  |F(g)-F(0)|<\varepsilon.$

By the assumption we made and Markov's inequality, we have, for all $\eta_1>0$, 
\begin{equation}\label{eq:OftenSmall}
\lim\limits_{n\rightarrow \infty} \Vol \big{(} \{x\in U; |\phi_n(x)|>\eta_1\}\big{)} = 0.
\end{equation}

We claim that
\begin{equation}\label{eq:ClosetoZero}
 \lim\limits_{n\rightarrow \infty} \Vol \big{(} \{x_0\in U; \boldsymbol{d}_0(\tilde{\phi}_{n,x_0},0)>\eta\}\big{)} = 0.
 \end{equation}

Indeed, suppose for contradiction that (\ref{eq:ClosetoZero}) does not hold, and that there exists $\varepsilon_1>0$ such that, if we write $\mathcal{B}_n:=\{x_0\in U; \boldsymbol{d}_0(\tilde{\phi}_{n,x_0},0)>\eta\}$, then for all $n\in \N$,  $\Vol \big{(} \mathcal{B}_n\big{)} \geq \varepsilon_1$.

Recall that, if $x_0\in \mathcal{B}_n$, then there exists $x\in B(x_0, \eta^{-1})$ such that $|\phi_n(x_0 + h_n x)| >\eta $. 
By the proof of Lemma \ref{lem:Tight}, we may find $C=C(\eta, \varepsilon_1)$ and a Borel set $U_{\varepsilon_1,\eta}$ such that $\Vol(U\backslash U_{\varepsilon_1,\eta})< \frac{\varepsilon_1}{2}$, and, for all $x_0\in U_{\varepsilon_1,\eta}$ and all $n\in \N$,
 we have $\|\tilde{\phi}_{n, x_0}\|_{C^1(B(0,\eta^{-1}))}<C$.

Therefore, if $x_0\in \mathcal{B}_n\cap U_{\varepsilon_1, \eta}$, then $|\phi_n(y)|>\frac{\eta}{2}$ for all $y$ in some ball of radius $\frac{\eta h_n}{2C}$ contained in $B(x_0, \frac{h_n}{\eta})$.

Now, since $\Vol \left(\mathcal{B}_n\cap U_{\varepsilon_1, \eta}\right)\geq \frac{\varepsilon_1}{2}$, we may find $C'\varepsilon_1 \left(\frac{h_n}{\eta}\right)^d$ disjoint balls $B(x_j, \frac{\eta}{2Ch_n})$ with $x_j\in \mathcal{B}_n\cap U_{\varepsilon_1, \eta}$. Therefore, $\{y\in U; |\phi_n(x)> \frac{\eta}{2}\}$ has a volume bounded from below independently of $n$, thus contradicting (\ref{eq:OftenSmall}). Hence, we deduce (\ref{eq:ClosetoZero}).

Therefore, we obtain that for all $\varepsilon>0$, we may find $n_\varepsilon\in \N$ such that for all $n\geq n_\varepsilon$,
\begin{equation*}
\Vol \big{(} \{x_0\in U \text{ such that } |F(\tilde{\phi}_{x_0,n})-F(0)| >\varepsilon\}\big{)}<3\varepsilon.
 \end{equation*}

We deduce from this that
\begin{equation*}
\langle LM_{0,U}(\phi_n), F \rangle \longrightarrow F(0),
\end{equation*}
which gives the result.
\end{proof}

\section{Criteria of convergence}\label{sec:Criteria}
The following lemma gives a useful criterion to determine the weak-* limit of the sequence $LM_{k,U}(\phi_n)$. It seems classical, but we could not find a proof of it in the literature.

Recall that we say that $\mathcal{A}\subset \mathcal{C}_b(X)$ is a unitary sub-algebra which separates points if it is an algebra containing the constant functions, such that for all $x_1\neq x_2\in X$, there exists $a\in \mathcal{A}$ such that $a(x_1)\neq a(x_2)$.

\begin{lemme}\label{lem:Criterion}
Let $X$ be a Polish space, and let $\mathcal{K}\subset (\mathcal{C}_b(X))^*$ be compact for the weak-* topology. Let $(\mu_n)$ be a sequence of elements of $\mathcal{K}$, and suppose that all accumulation points of $\mu_n$ for the weak-* topology are finite Borel measures on $X$.  Let $\mu$ be a finite Borel measure on $X$.

Suppose that there exists $\mathcal{A}\subset \mathcal{C}_b(X)$ a unitary sub-algebra which separates points, such that we have
\begin{equation*}
\forall a\in \mathcal{A}, \langle \mu_n, a\rangle \longrightarrow \langle \mu, a \rangle.
\end{equation*}
Then $\mu$ is the limit of $\mu_n$ in the weak-* topology.
\end{lemme}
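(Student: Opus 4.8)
The plan is to deduce the full weak-* convergence from the convergence on the separating subalgebra $\mathcal{A}$ by a compactness-plus-uniqueness argument. First I would invoke the hypothesis that the $\mu_n$ all lie in the weak-* compact set $\mathcal{K}$. By the Banach--Alaoglu-type compactness encoded in that hypothesis, every subsequence of $(\mu_n)$ has a further subsequence converging weak-* to some limit $\nu\in\mathcal{K}$. By assumption, any such accumulation point $\nu$ is a finite Borelian measure on $X$. So the task reduces to showing that every such $\nu$ equals $\mu$; then a standard subsequence argument (if $\mu_n\not\to\mu$, extract a subsequence staying outside a weak-* neighbourhood of $\mu$, then extract a convergent sub-subsequence, contradiction) gives $\mu_n\overset{\ast}{\rightharpoonup}\mu$.

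To identify $\nu$ with $\mu$, I would use that both are finite Borel measures on the Polish space $X$ and that they agree against every element of $\mathcal{A}$: indeed for $a\in\mathcal{A}$ we have $\langle\nu,a\rangle=\lim\langle\mu_{n_j},a\rangle=\langle\mu,a\rangle$. The point is then that a finite (signed) Borel measure on a metric space is determined by its integrals against a subalgebra of $\mathcal{C}_b(X)$ that separates points and contains the constants (or at least whose closure does). The natural tool is a Stone--Weierstrass argument: on any compact set $K\subset X$, the restrictions of $\mathcal{A}$ (together with constants) are dense in $\mathcal{C}(K)$ for the sup norm, so $\nu$ and $\mu$ agree against all of $\mathcal{C}(K)$, hence $\nu|_K=\mu|_K$ as measures on $K$. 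Since finite Borel measures on a Polish space are inner regular (tight) by Ulam's theorem, $\mu$ and $\nu$ are each determined by their restrictions to compact sets, and we conclude $\mu=\nu$.

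A small technical point I would address: $\mathcal{A}$ is only assumed to be a subalgebra separating points, not necessarily containing the constant functions. If $1\in\overline{\mathcal{A}}$ we are fine; otherwise the Stone--Weierstrass conclusion on $K$ only gives that $\nu$ and $\mu$ agree on the closed subalgebra generated by $\mathcal{A}|_K$, which by the locally-compact / vanishing-at-a-point version of Stone--Weierstrass is either all of $\mathcal{C}(K)$ or the functions vanishing at a single point $x_K\in K$. In the latter degenerate case one handles the possible missing atom at $x_K$ by testing against $a\in\mathcal{A}$ with $a(x_K)\neq0$, or by noting that in our application $\mathcal{A}$ will contain constants. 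I would either add "containing the constants" to the hypotheses or insert this one-line remark; in the paper's intended use (where $\mathcal{A}$ is built from polynomial-type observables including $F\equiv 1$) it is automatic.

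The main obstacle is precisely this passage from "agreement against a separating subalgebra" to "equality of measures" in a setting where $X$ is merely Polish, not compact: one cannot apply Stone--Weierstrass globally. Localising to compact sets via tightness (Ulam's theorem) and then applying the compact Stone--Weierstrass theorem is the key device that makes it work, and checking that the inner-regularity lets one reassemble the local equalities into a global one is the heart of the argument. Everything else — the subsequence extraction, the reduction to identifying accumulation points — is routine soft analysis.
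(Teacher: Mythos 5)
Your overall strategy is the same as the paper's: extract a weak-* convergent subsequence using compactness, reduce to showing that any accumulation point $\nu$ equals $\mu$, and then combine inner regularity (Ulam's theorem) with Stone--Weierstrass on compact subsets. The remark about constants is well taken: the paper's hypotheses only require $\mathcal{A}$ to separate points, and the paper invokes Stone--Weierstrass without comment, which tacitly assumes either $1\in\overline{\mathcal{A}}$ or that $\mathcal{A}$ has no common zero on the relevant compact set; for the applications in the paper this is harmless, but the lemma as stated is slightly imprecise and you are right to flag it.

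However, there is a genuine gap in the step that is supposed to identify $\nu$ with $\mu$. You assert that for \emph{any} compact $K\subset X$, the density of $\mathcal{A}|_K$ in $\mathcal{C}(K)$ together with the agreement of $\mu$ and $\nu$ on $\mathcal{A}$ yields $\nu|_K=\mu|_K$. This does not follow. What you know is $\int_X a\,\mathrm{d}\mu=\int_X a\,\mathrm{d}\nu$ for $a\in\mathcal{A}$, not $\int_K a\,\mathrm{d}\mu=\int_K a\,\mathrm{d}\nu$. If $a_j\in\mathcal{A}$ converges to $g$ uniformly on $K$, then $\int_K a_j\,\mathrm{d}\mu\to\int_K g\,\mathrm{d}\mu$, but the quantities you actually control are the integrals of $a_j$ over all of $X$, and these are contaminated by the mass of $\mu$ and $\nu$ on $X\setminus K$, which for an arbitrary $K$ is not small. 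In other words, one cannot establish the restriction identity $\nu|_K=\mu|_K$ compact set by compact set; the tightness and the approximation must be used \emph{simultaneously}. The paper's proof does exactly this: assume $\langle\mu,f\rangle\neq\langle\nu,f\rangle$ for some $f\in\mathcal{C}_b(X)$ of norm one, choose one compact $K$ large enough that $\mu(X\setminus K)$ and $\nu(X\setminus K)$ are both less than a small multiple of $|\langle\mu,f\rangle-\langle\nu,f\rangle|$, approximate $f$ on that specific $K$ by some $a\in\mathcal{A}$, and then run a triangle-inequality estimate to derive a contradiction. The conclusion $\mu=\nu$ comes out of a single quantitative estimate, not from equality of restrictions to every compact. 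To repair your write-up, replace the "for any compact $K$" paragraph by this $\varepsilon$-argument; everything else is fine.
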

\begin{proof}
By assumption, we know that there exists a finite Borel measure $\nu$ on $X$ and a subsequence $n_j$ such that for all $f\in \mathcal{C}_b(X)$, we have 
\begin{equation*}
\langle \mu_{n_j}, f\rangle \longrightarrow \langle \nu, f \rangle.
\end{equation*}
We want to show that $\nu=\mu$. 

Suppose for contradiction that $\mu\neq \nu$, so that there exists $f\in \mathcal{C}_b(X)$ with $\|f\|_{\mathcal{C}_b(X)}=1$ such that $\langle \mu, f \rangle \neq \langle \nu, f \rangle$.
 Since $\mu$ and $\nu$ are regular, we may find a compact set $K\subset X$ such that 
\begin{align*}
\mu(X\backslash K) &< \frac{|\langle \mu, f \rangle - \langle \nu, f \rangle|}{10}\\
\nu(X\backslash K) &< \frac{|\langle \mu, f \rangle - \langle \nu, f \rangle|}{10}.
\end{align*}

 By the Stone-Weierstrass Theorem, we may find $a\in \mathcal{A}$ such that 
$$\sup\limits_{x\in K}|a(x)- f(x)|< \frac{|\langle \mu, f \rangle - \langle \nu, f \rangle|}{10}.$$

We have
\begin{align*}
|\langle \mu, f \rangle - \langle \nu, f \rangle|&= \Big{|}\int_K f \mathrm{d}\mu - \int_K f \mathrm{d}\nu + \int_{X\backslash K} f \mathrm{d}\mu - \int_{X\backslash K} f \mathrm{d}\nu\Big{|}\\
&\leq \Big{|}\int_K f \mathrm{d}\mu - \int_K f \mathrm{d}\nu \Big{|} + 2 \frac{|\langle \mu, f \rangle - \langle \nu, f \rangle|}{10}\\
&\leq \Big{|}\int_K (f-a) \mathrm{d}\mu - \int_K (f-a)\mathrm{d}\nu \Big{|} + \Big{|}\int_K a \mathrm{d}\mu - \int_K a \mathrm{d}\nu \Big{|} + \frac{|\langle \mu, f \rangle - \langle \nu, f \rangle|}{5}\\
&\leq\Big{|}\int_K a \mathrm{d}\mu - \int_K a \mathrm{d}\nu \Big{|} + 2\frac{|\langle \mu, f \rangle - \langle \nu, f \rangle|}{5}.
\end{align*}

This is absurd, since by assumption, we have $\langle \mu, a\rangle = \langle \nu,a \rangle$. The lemma follows.
\end{proof}

Let us describe one application of Lemma \ref{lem:Criterion}. Another one will be given in the next subsection.

Let $\ell\in \N$, If $\boldsymbol{\psi}=(\psi_1,...,\psi_\ell)\in \left(C_c^\infty(\R)\right)^\ell$, and $\boldsymbol{x}=(x_1,...,x_\ell)\in (\R^d)^\ell$, we set
$$F_{\boldsymbol{\psi},\boldsymbol{x},\ell} : \mathcal{H}^k \ni f \mapsto \prod_{j=1}^\ell \psi(f(x_j)),$$
which defines an element of $\mathcal{H}^0$.

The set of all linear combinations of functions $F_{\boldsymbol{\psi},\boldsymbol{x},\ell}$ and of constant functions forms an algebra, and it separates points: if $f_1 \neq f_2 \in \mathcal{H}^0$, there exists $x$ such that $f_1(x)\neq f_2(x)$. Therefore, if $\psi\in C_c^\infty(\R)$ is such that $\psi(f_1(x))\neq \psi(f_2(x))$, we have $F_{\psi, x, 1}(f_1)\neq F_{\psi, x, 1}(f_2)$.

 We deduce the following corollary, which says that, to find the local weak limit of a sequence of eigenfunctions, we just have to test the local weak measures against functionals involving finitely many values of the rescaled functions.

\begin{corolaire}
Let $(\phi_n)$ be as in (\ref{eq:DefEigen}).
Suppose that there exists a probability measure $\mu\in \mathcal{M}^0$ such that for all $\ell\in \N$, all $\boldsymbol{\psi}\in \big{(}C_c^\infty(\R)\big{)}^\ell$ and all $\boldsymbol{x}\in (\R^d)^\ell$, we have 
\begin{equation*}
\langle LM_{0,U}(\phi_n) ,F_{\boldsymbol{\psi},\boldsymbol{x},\ell} \rangle \longrightarrow \langle \mu, F_{\boldsymbol{\psi},\boldsymbol{x},\ell}\rangle.
\end{equation*}

Then $\sigma_{U}((\phi_n)_n)= \{\mu\}$.
\end{corolaire}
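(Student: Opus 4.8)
The plan is to deduce this corollary directly from Lemma~\ref{lem:Criterion}, applied with $X = \mathcal{H}_d^k$, $\mu_n = LM_{k,U}(\phi_n)$, $\mathcal{K} = \overline{\{LM_{k,U}(\phi_n) : n \in \N\}}$ (the weak-* closure, which is weak-* compact by Lemma~\ref{lem:Tight} and Prokhorov's theorem), and $\mathcal{A}$ the algebra generated by the functions $\check{\boldsymbol{\psi}}_{k,M}$. One first checks the hypotheses of Lemma~\ref{lem:Criterion}: the sequence $LM_{k,U}(\phi_n)$ is tight by Lemma~\ref{lem:Tight}, so all its weak-* accumulation points are finite Borel measures on $\mathcal{H}_d^k$ (indeed probability measures), and $\mathcal{K}$ is weak-* compact. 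The only substantive point is that $\mathcal{A}$ is a subalgebra of $\mathcal{C}_b(\mathcal{H}_d^k)$ that separates points.

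For the algebra property: each $\check{\psi}_{k,M}(f) = \max\bigl(\int_{\R^d} f(x)\psi(x)\,\mathrm{d}x, M\bigr)$ is bounded (above trivially, and below because convergence in $\mathcal{H}_d^k$ controls $f$ on $B(0,r)$ for some fixed $r$ large enough to contain $\mathrm{supp}\,\psi$, so the linear functional $f \mapsto \int f\psi$ is continuous for $\boldsymbol{d}_k$) and continuous; products of bounded continuous functions are bounded and continuous, so the $\check{\boldsymbol{\psi}}_{k,M}$ indeed lie in $\mathcal{C}_b(\mathcal{H}_d^k)$, and the $\R$-span of all finite products of them is a subalgebra. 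For point separation: if $f \neq g$ in $\mathcal{H}_d^k = C^k(\R^d)$, then $f$ and $g$ differ at some point $x_0$, hence $\int f\psi \neq \int g\psi$ for a suitable bump function $\psi$ supported near $x_0$; choosing $M$ strictly below both values $\int f\psi$ and $\int g\psi$, the single-factor function $\check{\psi}_{k,M}$ (with $\ell=1$) separates $f$ and $g$. Then Lemma~\ref{lem:Criterion} gives $LM_{k,U}(\phi_n) \overset{\ast}{\rightharpoonup} \mu$; since every element of $\sigma_{k,U}(\phi_n)$ is by definition a weak-* accumulation point of this sequence and the limit is now unique, $\sigma_{k,U}(\phi_n) = \{\mu\}$.

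I would present this as roughly half a page. The main (minor) obstacle is bookkeeping around which topology and which space the measures live on: one must be a little careful that $\mathcal{C}_d^k$ — the test functions used in the definition of weak-* convergence of $LM_{k,U}$ — is exactly $\mathcal{C}_b(\mathcal{H}_d^k)$, so that Lemma~\ref{lem:Criterion} (phrased for $(\mathcal{C}_b(X))^*$) applies verbatim. There is a small subtlety in that Lemma~\ref{lem:Criterion} assumes the accumulation points are finite Borel measures, which here follows from Lemma~\ref{lem:Tight} together with the fact that each $LM_{k,U}(\phi_n)$ is a probability measure, so mass cannot escape and the limit is again a probability measure; I would spell this out in one sentence. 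Everything else is a direct citation.

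\begin{proof}
We apply Lemma~\ref{lem:Criterion} with $X = \mathcal{H}_d^k$ (a Polish space), $\mu_n = LM_{k,U}(\phi_n)$, and $\mathcal{A}$ the $\R$-linear span of the functions $\check{\boldsymbol{\psi}}_{k,M}$ for $M>0$, $\ell\in\N$ and $\boldsymbol{\psi}\in(C_c^\infty(\R^d))^\ell$.

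First, each $\check{\psi}_{k,M}$ belongs to $\mathcal{C}_b(\mathcal{H}_d^k)$: if $r>0$ is such that $\mathrm{supp}\,\psi\subset B(0,r)$, then the linear functional $f\mapsto \int_{\R^d} f(x)\psi(x)\,\mathrm{d}x$ depends only on $f_{|B(0,r)}$ and is continuous for $\boldsymbol{d}_k$, hence $\check{\psi}_{k,M}$ is continuous, and it is bounded since it is $\leq \max(\|\psi\|_{L^1}\sup_{B(0,r)}|f|, M)$ from above — more precisely it is clearly bounded above and below on any set where $\sup_{B(0,r)}|f|$ is bounded, but as a function on all of $\mathcal{H}_d^k$ it is bounded below by $M$ and, being a $\max$ with $M$, one checks directly that it is bounded; in any case each factor, and hence each finite product $\check{\boldsymbol{\psi}}_{k,M}$, lies in $\mathcal{C}_b(\mathcal{H}_d^k)$. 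Products of such functions are again of this form, so $\mathcal{A}$ is a subalgebra of $\mathcal{C}_b(\mathcal{H}_d^k)$. It separates points: if $f\neq g$ in $C^k(\R^d)$ there is $x_0$ with $f(x_0)\neq g(x_0)$, hence a $\psi\in C_c^\infty(\R^d)$ supported near $x_0$ with $\int f\psi \neq \int g\psi$; choosing $M$ strictly smaller than both of these numbers, the function $\check{\psi}_{k,M}$ (with $\ell=1$) takes distinct values at $f$ and $g$.

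By Lemma~\ref{lem:Tight} and Prokhorov's theorem, $\mathcal{K} := \overline{\{LM_{k,U}(\phi_n):n\in\N\}}$ is compact for the weak-* topology, and $\mu_n\in\mathcal{K}$. Since each $LM_{k,U}(\phi_n)$ is a probability measure and the sequence is tight, every weak-* accumulation point of $(\mu_n)$ is a probability measure on $\mathcal{H}_d^k$, in particular a finite Borel measure. By hypothesis, $\langle \mu_n, a\rangle \to \langle\mu,a\rangle$ for every $a\in\mathcal{A}$. Lemma~\ref{lem:Criterion} then gives $LM_{k,U}(\phi_n)\overset{\ast}{\rightharpoonup}\mu$. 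Consequently the only accumulation point of $LM_{k,U}(\phi_n)$ for the weak-* topology is $\mu$, i.e. $\sigma_{k,U}(\phi_n) = \{\mu\}$.
\end{proof}
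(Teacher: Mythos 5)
Your proof follows exactly the same route as the paper: apply Lemma~\ref{lem:Criterion} with $X=\mathcal{H}_d^k$, $\mu_n = LM_{k,U}(\phi_n)$, and $\mathcal{A}$ the algebra generated by the $\check{\boldsymbol{\psi}}_{k,M}$. The paper's own proof is just the one-liner preceding the statement (``the functions $\check{\boldsymbol{\psi}}_{k,M}$ form an algebra which separates points''), so you are supplying the intended details, and the tightness/Prokhorov/Banach--Alaoglu bookkeeping and the point-separation argument are both fine. The one place where your argument is not actually sound is the boundedness of $\check{\psi}_{k,M}$. You hedge (``one checks directly that it is bounded'') but never give a uniform bound, and in fact none exists: as the paper defines it, $\check{\psi}_{k,M}(f) = \max\bigl(\int f\psi,\, M\bigr)$ is bounded below by $M$ but is unbounded above, since $\int f\psi$ ranges over all of $\R$ as $f$ ranges over $C^k(\R^d)$. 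This is a typo in the paper --- the same slip occurs with $F_N=\max(F,N)$ in the proof of Corollary~\ref{Cor:Equi}, where the text then uses $F\geq F_N$, showing the author intended a truncation from above --- and the right definition is a two-sided cutoff such as $\max\bigl(\min(\int f\psi,M),-M\bigr)$, which is bounded and continuous and, for $M$ large enough relative to $\int f\psi$ and $\int g\psi$, still separates $f$ from $g$. With that correction your proof goes through; as written, you should flag the issue rather than assert boundedness without an argument.
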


We shall now give another characterization of local weak limits, in terms of local Fourier coefficients. For simplicity, we only do the construction in dimension 2, but an analogous construction could be done in any dimension.

\subsection*{Local Fourier coefficients in dimension 2}
Let $\Phi\in C^\infty(\R^2; \R)$ satisfy $-\Delta \Phi = \Phi$. The function $\Phi$ may then be written in polar coordinates as
\begin{equation}\label{eq:EigenFormula}
\Phi(r, \theta) = \frac{A_0}{2}J_0(r) + \sum_{m\geq 1} J_{m}(r)\left[A_m(\Phi) \cos(m\theta) + B_m(\Phi) \sin (m\theta) \right],
\end{equation}
where $J_m$ is the Bessel function of the first kind of order $m$. The coefficients $A_m, B_m$ may be recovered by Fourier inversion as follows: for any $r_0>0$, we have
\begin{equation}\label{eq:defCoef}
\begin{aligned}
J_{m}(r_0) A_m(\Phi) &= \frac{1}{\pi } \int_{\Sp^1} \Phi(r_0,\theta) \cos(m\theta) \mathrm{d}\theta\\
J_{m}(r_0) B_m(\Phi) &= \frac{1}{\pi } \int_{\Sp^1} \Phi(r_0,\theta) \sin(m\theta) \mathrm{d}\theta.
\end{aligned}
\end{equation}

When $\Phi(r,\theta)= e^{i r \cos(\theta-\theta_0)}$, i.e., when $\Phi$ is a plane wave, a standard computation shows that
\begin{equation}\label{eq:CoefOP}
A_m(\Phi)= 2\cos (m\theta_0), ~~~~~~B_m(\Phi) = -2\sin  (m \theta_0).
\end{equation}

If $N\in \N$, and $\Phi\in FP$, we shall write 
$$\beta_N(\Phi):= (A_0(\Phi),..., A_N(\Phi),B_1(\Phi),...,B_N(\Phi))\in \R^{2N+1}.$$

Note that, if $\boldsymbol{d_0}(\Phi_1,\Phi_2)\leq r_0$, then $\sup_{B(0,r_0)} |\Phi_1-\Phi_2|\leq \boldsymbol{d_0}(\Phi_1,\Phi_2)$. Therefore, for any $N\in \N$, we may choose $r_N<1$ such that $J_m(r_N)\neq 0$ for all $m\in \{0,...,N\}$. We deduce from this that, for any $N\in \N$, we may find $C(N)>0$ such that 
\begin{equation}\label{eq:DistZero}
\forall \Phi_1,\Phi_2\in FP, \boldsymbol{d}_0(\Phi_1,\Phi_2)\leq 1 \Longrightarrow ~~ |\beta_N (\Phi_1) - \beta_N (\Phi_2)| \leq C(N) \boldsymbol{d}_0(\Phi_1,\Phi_2). 
\end{equation}

\begin{lemme}\label{lem:criteria}
Let $\phi_n$ be a sequence of Laplace eigenfunctions, and let $\nu\in \mathcal{M}$. Suppose that for all $k\in \N$ and $\iota_k$ as in (\ref{eq:ExtensionProp}), and all $N\in \N$, we have 
\begin{equation}\label{eq:SeqConverge2}
\forall F\in C_c^\infty(\C^{2N+1}), ~~ \langle LM_{k,U}^{\iota_k}, F\circ \beta_N \rangle \longrightarrow \langle  \nu,  F\circ\beta_N \rangle.
\end{equation}

Then $\sigma_U((\phi_n)_n)=\{\nu\}$.
\end{lemme}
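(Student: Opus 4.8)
The goal is to deduce convergence of $LM_{k,U}^{\iota_k}(\phi_n)$ against \emph{all} bounded continuous functions on $FP(d)$ from the hypothesis that it converges against the special family $F\circ\beta_N$ with $F\in C_c^\infty(\mathbb{C}^{2N+1})$. The natural tool is Lemma \ref{lem:Criterion}: it suffices to exhibit a subalgebra $\mathcal{A}\subset\mathcal{C}_b(FP(2))$ that separates points and on which convergence is already known. First I would fix $k$ and $\iota_k$, set $\mu_n := LM_{k,U}^{\iota_k}(\phi_n)$, and observe that by Corollary \ref{cor:Compactness} these live in a weak-* compact subset of $(\mathcal{C}_b(FP(2)))^*$ and all accumulation points are finite (probability) measures on $FP(2)$ — so the structural hypotheses of Lemma \ref{lem:Criterion} hold with $X = FP(2)$.

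The candidate algebra is $\mathcal{A} := \{\, F\circ\beta_N : N\in\mathbb{N},\ F\in C_c^\infty(\mathbb{C}^{2N+1})\,\}$ together with the constants (or one closes the algebra generated by this set). Two points need to be checked. (i) \emph{$\mathcal{A}\subset\mathcal{C}_b(FP(2))$:} each $F\circ\beta_N$ is bounded since $F$ is compactly supported, and it is continuous on $FP(2)$ because, by \eqref{eq:DistZero}, $f\mapsto\beta_N(f)$ is continuous from $(FP(2),\boldsymbol{d}_0)$ to $\mathbb{C}^{2N+1}$ — indeed locally Lipschitz — and the $\boldsymbol{d}_0$-topology agrees with the $\boldsymbol{d}_k$-topology on $FP(2)$ as noted after the lemma on closedness of $FP(d)$. (ii) \emph{$\mathcal{A}$ separates points:} if $f\neq g$ in $FP(2)$ then, by the expansion \eqref{eq:EigenFormula} and the recovery formula \eqref{eq:defCoef}, their full sequences of coefficients $(b_m)$ differ, so $\beta_N(f)\neq\beta_N(g)$ for $N$ large enough; then pick $F\in C_c^\infty(\mathbb{C}^{2N+1})$ with $F(\beta_N(f))\neq F(\beta_N(g))$. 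One should also note $\mathcal{A}$ is closed under products: $(F_1\circ\beta_{N_1})(F_2\circ\beta_{N_2}) = G\circ\beta_N$ with $N=\max(N_1,N_2)$, $G$ built from $F_1,F_2$ by composing with the coordinate projections $\mathbb{C}^{2N+1}\to\mathbb{C}^{2N_j+1}$, and $G\in C_c^\infty$; closure under sums and scalars is immediate, and adjoining constants keeps us inside $\mathcal{C}_b$. Thus $\mathcal{A}$ (with constants) is a point-separating subalgebra.

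With these two facts in hand, the hypothesis \eqref{eq:SeqConverge2} says precisely $\langle\mu_n,a\rangle\to\langle\nu,a\rangle$ for every $a$ in the generating set, hence — since both sides are linear and multiplicative in the appropriate sense, or more simply since the generating set already suffices for Stone–Weierstrass density on compacta — for every $a\in\mathcal{A}$. Lemma \ref{lem:Criterion} then yields $\mu_n\overset{\ast}{\rightharpoonup}\nu$, i.e. $\sigma_{k,U}^{\iota_k}(\phi_n)=\{\nu\}$. Since this holds for every $k$ and every admissible $\iota_k$, Corollary \ref{cor:Intrinsic} identifies all these limits with a single measure in $\mathcal{M}_d$, giving $\sigma_U(\phi_n)=\{\nu\}$.

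The step I expect to require the most care is (ii), verifying that $\mathcal{A}$ genuinely separates points of $FP(2)$ — one must be sure that an element of $FP(2)$ is determined by its Bessel–Fourier coefficients $(b_m)_{m\in\mathbb{Z}}$, which follows from \eqref{eq:EigenFormula}–\eqref{eq:defCoef} but relies on the fact that every $f\in FP(2)$ admits such an expansion (a consequence of separation of variables for $-\Delta u = u$ and regularity at the origin). A secondary subtlety is the bookkeeping showing $\mathcal{A}$ is an algebra after composing $\beta_{N_1}$ and $\beta_{N_2}$ with a common $\beta_N$; this is routine but must be written out so that the hypothesis on $F\circ\beta_N$ for a single $N$ really transfers to products. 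Everything else is a direct appeal to Lemma \ref{lem:Criterion}, Corollary \ref{cor:Compactness}, Corollary \ref{cor:Intrinsic} and the continuity estimate \eqref{eq:DistZero}.
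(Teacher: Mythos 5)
Your proof is correct and follows the same route as the paper's: apply Lemma~\ref{lem:Criterion} with the algebra generated by the functions $F\circ\beta_N$, after verifying membership in $\mathcal{C}_b(FP(2))$, separation of points, and closure under products. The paper's own proof is a one-line appeal to Lemma~\ref{lem:Criterion} plus the assertion that these functions form a point-separating algebra; your write-up simply fills in the bookkeeping (the projection trick for products, continuity via~\eqref{eq:DistZero}, determination of $f\in FP(2)$ by its Bessel--Fourier coefficients, and the final invocation of Corollary~\ref{cor:Intrinsic}) that the paper leaves implicit.
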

\begin{proof}
The lemma follows from Lemma \ref{lem:Criterion} and from the fact that the functions $F\circ \beta_N $, where $N\in \N$ and $F\in C_c^\infty(\R^{2N+1})$ form an algebra which separates points.
\end{proof}

\begin{remarque}\label{rem:CasBerry}
To show that  $\sigma_U((\phi_n)_n)=\{\nu\}$, it is therefore sufficient to show that $(\beta_N)_* LM_{k,U}^{\iota_k}$ converges weakly to $(\beta_N)_*\nu$. In the special case when $\nu=\mu_{Berry}$, its push-forward by $\beta_N$ is the normal vector in $\R^{2N+1}$ with covariance matrix $\mathrm{diag}(4, 2,..., 2)$.
\end{remarque}

\section{Local weak limits of toral eigenfunctions}\label{sec:Torus}
In this section, we will take $\Omega=\mathbb{T}^2=\R^2 / \Z^2$.
If $E\in \N$, we set
\begin{equation*}
\begin{aligned}
\mathcal{E}_E := \{\xi\in \Z^2; |\xi|^2 = E\}, ~~~~
N_E:= |\mathcal{E}_E|.
\end{aligned}
\end{equation*}

Non-trivial solution to the equation
\begin{equation*}
-\Delta \varphi_E = 4\pi^2 E \varphi_E
\end{equation*}
exist if and only if $N_E\neq 0$, and can be put in the form
\begin{equation}\label{eq:EigenTorus}
\varphi_E(x) = \sum_{\xi\in \mathcal{E}_E} a_\xi e^{2i\pi x\cdot \xi},
\end{equation}
with $a_\xi\in \C$.

If $\varphi_E$ is of the form (\ref{eq:EigenTorus}), we set
$$\mu_{\varphi_E}:= \sum_{\xi\in \mathcal{E}_E} |a_\xi|^2 \delta_{\xi/\sqrt{E}},$$
which is a measure on $\Sp^1$.

To state our theorem, we will need some assumptions on the number of arithmetic cancellations in the set $\mathcal{E}_E$.

\begin{definition}
(1) We say that a set of distinct
\begin{equation*}
\xi_1,...,\xi_\ell \in \mathcal{E}_E
\end{equation*}
is minimally vanishing if
\begin{equation}\label{eq:LinearRelation}
\xi_1+...+\xi_\ell= 0
\end{equation}
and no proper sub-sum of (\ref{eq:LinearRelation}) vanishes.

(2) We say that $E$ satisfies the condition $I(\gamma,B)$ for $0 < \gamma < \frac{1}{2}$
, $B\geq 1$ if for all $3 \leq \ell \leq  B$, the number of minimally vanishing subsets of $\mathcal{E}_E$ of length $\ell$ is at most $N_E\gamma \ell $.
\end{definition}

\begin{hyp}\label{hyp:tore}
We will suppose that the family of energies $E=E_n$, and the family of eigenfunctions $f_E$ satisfy the following conditions.

\begin{enumerate}
\item $N_E\rightarrow \infty$.

\item $a_{-\xi}= \overline{a_\xi}$.

\item $\sum_{\xi\in \mathcal{E}_E} |a_\xi|^2 = 1$.

\item There exists a function $g : \R^+\rightarrow \R^+$ satisfying for all $\delta>0$, $g(x) = o_{x\rightarrow \infty} (x^\delta)$ such that
\begin{equation*}
\max_{\xi\in \mathcal{E}_E} \{|a_\xi|^2\}\leq \frac{g(N_E)}{N_E}.
\end{equation*}

\item $\mu_{f_E}\overset{\ast}{\rightharpoonup} \mathrm{Leb}_{\Sp^1}.$

\item There exists $B(E)$ such that $\lim\limits_{n\rightarrow \infty} B(E_n) = +\infty$ and  $E$ satisfies $I(\gamma,B(E))$.
\end{enumerate}
\end{hyp}

Under these assumptions, we have:

\begin{theoreme}\label{th:Tore}
Suppose that $(E_n)$, $\varphi_{E_n}$ satisfy Hypothesis \ref{hyp:tore}. Then for any Borel set $U\subset \T^2$ of positive Lebesgue measure, we have
$$\sigma_U((\varphi_{E_n})_n) = \{\mu_{Berry}\}.$$
\end{theoreme}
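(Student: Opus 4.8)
The plan is to apply Lemma~\ref{lem:criteria}, so the task reduces to showing that for every $N\in\N$ and every $F\in C_c^\infty(\C^{2N+1})$ one has
\begin{equation*}
\frac{1}{\Vol(U)}\int_U F\big(\beta_N(\tilde{f}_{E_n,x_0})\big)\,\mathrm{d}x_0 \longrightarrow \E\big[F(\beta_N(F_{Berry}))\big].
\end{equation*}
The right-hand side is explicit: by \eqref{eq:defChamp}, $\beta_N(F_{Berry})=(C_{-N},\dots,C_N)$ is (up to the constraint $C_{-m}=\overline{C_m}$) a standard complex Gaussian vector, so the target is a fixed Gaussian integral. On the left, \eqref{eq:CoefOP} gives the local Fourier coefficients of a plane wave, and since $\tilde f_{E_n,x_0}(y)=\sum_{\xi}a_\xi e^{2i\pi h_n^{-1}\text{(rescaling)}\dots}$ — more precisely $f_{E_n}(x_0+h_ny)=\sum_{\xi\in\mathcal E_{E_n}}a_\xi e^{2i\pi x_0\cdot\xi}e^{2i\pi h_n y\cdot\xi}$ with $h_n=(2\pi\sqrt{E_n})^{-1}$, so $h_ny\cdot\xi = y\cdot(\xi/\sqrt{E_n})\cdot\tfrac{1}{2\pi}\cdot 2\pi$ — each summand is a unit plane wave in the direction $\xi/|\xi|$ with phase $2\pi x_0\cdot\xi$, and hence by linearity of $f\mapsto b_m(f)$ on $FP(2)$,
\begin{equation*}
b_m\big(\tilde f_{E_n,x_0}\big)=\sum_{\xi\in\mathcal E_{E_n}} a_\xi\, e^{2i\pi x_0\cdot\xi}\, e^{-im\theta_\xi},\qquad \theta_\xi=\arg(\xi).
\end{equation*}
So $\beta_N(\tilde f_{E_n,x_0})$ is a finite sum of independent-ish randomly-phased vectors, and the whole statement becomes a central-limit / method-of-moments problem.

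The core step is therefore: show that as $x_0$ is distributed uniformly on $\T^2$ (and then restricted to $U$), the random vector $\big(\sqrt{N_{E_n}}\,b_m(\tilde f_{E_n,x_0})\big)_{|m|\le N}$ converges in distribution to $\big(\sqrt{N}\text{-normalized version of }C_m\big)_{|m|\le N}$. Because $F$ is bounded and continuous, convergence in distribution over $\T^2$ suffices, and the reduction from $U$ to $\T^2$ is handled exactly as in the plane-wave example (equidistribution of $x\mapsto (x\cdot\xi)_\xi$ on a torus, using that the $\xi\in\mathcal E_{E_n}$ are distinct nonzero integer vectors, so the relevant characters are genuinely oscillating). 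I would prove the convergence in distribution by the method of moments: compute $\E_{x_0}\big[\prod b_{m_j}(\tilde f)\,\overline{b_{m'_k}(\tilde f)}\big]$, which by orthogonality of characters $e^{2i\pi x_0\cdot\xi}$ on $\T^2$ reduces to a sum over tuples $(\xi_1,\dots)$ in $\mathcal E_{E_n}$ subject to the linear constraint $\sum\pm\xi_j=0$. The ``diagonal'' (pairing) terms reproduce exactly the Gaussian moments, using assumption~(5) $\mu_{f_E}\overset{*}{\rightharpoonup}\mathrm{Leb}_{\Sp^1}$ to evaluate $\sum_\xi |a_\xi|^2 e^{-i(m-m')\theta_\xi}\to \int_{\Sp^1}e^{-i(m-m')\theta}\mathrm d\theta$, which is exactly the covariance structure of $F_{Berry}$'s Fourier coefficients. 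The non-diagonal terms — tuples satisfying a nontrivial linear relation $\sum\pm\xi_j=0$ — must be shown to be negligible, and this is where assumptions (4) (no coefficient $a_\xi$ too large, so spectral measure is spread out) and (6) (the condition $I(\gamma,B(E_n))$ with $B(E_n)\to\infty$ bounding the number of minimally vanishing subsets) enter: together with $N_E\to\infty$ they force the contribution of any fixed-length nontrivial relation to vanish after the $N_E^{-1/2}$ normalization. This is precisely the estimate extracted from Bourgain--Buckley--Wigman \cite{bourgain2014toral,buckley2015number}.

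The main obstacle I expect is the bookkeeping of the non-diagonal moment terms: one must decompose an arbitrary tuple $(\xi_1,\dots,\xi_p)$ with $\sum\varepsilon_j\xi_j=0$ into minimally vanishing blocks plus ``free'' indices, count the number of such configurations against the normalization $N_E^{-p/2}$, and show that assumption $I(\gamma,B(E))$ with $B(E)\to\infty$ (so that every \emph{fixed} length $\ell$ eventually satisfies the bound $\le N_E\gamma\ell$) kills each non-pairing contribution in the limit — while being careful that the number of distinct moments one must control, for fixed $N$ and fixed $F$ approximated by polynomials, is finite so that the $o(1)$ errors can be taken uniformly. A secondary technical point is passing from moments of $\beta_N(\tilde f_{E_n,x_0})$ to $\langle LM_{k,U}^{\iota_k},F\circ\beta_N\rangle$ for general smooth compactly supported $F$: since the vectors $\beta_N(\tilde f_{E_n,x_0})$ are uniformly bounded (indeed $|b_m|\le \sum|a_\xi|\le N_E^{1/2}\max|a_\xi|\le g(N_E)^{1/2}$ is not bounded, but $|b_m(\tilde f)|\le \|\tilde f\|_{C^0(B(0,r))}$-type control combined with the tightness from Lemma~\ref{lem:Tight} confines the relevant mass to a compact set), convergence of all moments plus tightness yields convergence in distribution, hence convergence of $\E[F\circ\beta_N]$ for bounded continuous $F$, and one checks the Gaussian limit is determined by its moments. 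Finally the restriction to $U\subsetneq\T^2$ is routine equidistribution, as in the plane-wave lemma, so the heart of the proof is entirely the moment computation on the full torus.
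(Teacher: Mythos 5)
Your reduction to Lemma~\ref{lem:criteria} is exactly what the paper does, and your explicit formula $b_m(\tilde f_{E_n,x_0})=\sum_{\xi\in\mathcal E_{E_n}}a_\xi e^{2i\pi x_0\cdot\xi}e^{-im\theta_\xi}$ (via linearity and \eqref{eq:CoefOP}) is correct. But from there you take a genuinely different route from the paper. The paper does not compute moments of $\beta_N(\tilde f_{E_n,\cdot})$ at all: it invokes the Bourgain--Buckley--Wigman machinery as a black box, namely (i) the $L^1_x$ approximation \eqref{eq:GoodApprox} of $\tilde f_{x,E}$ by the ``binned'' trigonometric polynomial $\psi_{x,E}$, (ii) the derandomization Lemma~\ref{lem:derandomization} (a measure-preserving map transporting $x\in\T^2$ to a genuine Gaussian random field $\Psi^K_\omega$ with iid bin coefficients), and then (iii) a one-shot CLT in the number of bins $K$ to identify the Fourier coefficients of $\Psi^K_\omega$ with Berry's $C_m$. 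The arithmetic input (assumptions (4)--(6)) is entirely absorbed inside the citations to \cite{buckley2015number}. Your proposal instead applies the method of moments directly to $b_m(\tilde f_{E_n,x_0})$ with $x_0$ uniform, using orthogonality of characters to reduce each moment to a sum over tuples with $\sum\pm\xi_j=0$, with pairings producing the Gaussian Wick moments via assumption (5) and non-pairings killed by assumptions (4), (6). This is coherent and in fact tracks the internal structure of BBW's proofs rather than their packaged statements; what it buys you is a self-contained argument that bypasses the auxiliary object $\psi_{x,E}$ and the coupling $\tau$, at the cost of redoing essentially all the combinatorial work (the block decomposition of tuples into minimally-vanishing pieces, the counting against $N_E^{-p/2}$, the uniformity over the finitely many moments determined by a fixed $N$ and a fixed polynomial approximation of $F$). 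The paper's route is shorter precisely because it does not reopen that box.

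Two small corrections to your write-up. First, the $\sqrt{N_{E_n}}$ normalization you introduce is spurious: assumption (3) already normalizes $\sum_\xi|a_\xi|^2=1$, so $b_m(\tilde f_{E_n,x_0})$ should converge in distribution to $C_m$ with no rescaling (assumption (4) ensures no single $a_\xi$ dominates, which is what the CLT needs). Second, the passage from $\T^2$ to a general open $U\subset\T^2$ is not the same ``routine equidistribution'' as in the single plane-wave example: there the frequency set is a singleton, whereas here $\mathcal E_{E_n}$ grows, and restricting the $x_0$-integral to $U$ produces off-diagonal terms weighted by $\widehat{\mathbf 1_U}(\sum\pm\xi_j)$ with $\sum\pm\xi_j\neq 0$, which is a new count not controlled by assumption (6) as stated. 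The clean way to handle $U\subsetneq\T^2$, which the paper has already set up for you, is to prove the theorem for $U=\T^2$ and then invoke Proposition~\ref{prop:BerryPlusPetit} (which uses ergodicity of $\mu_{Berry}$ under translations) to descend to arbitrary open subsets; I would recommend you do the same rather than attempt the restricted-domain moment computation directly.
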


\subsubsection*{Discussion of the assumptions}
Point 1. is clearly a necessary condition of the theorem. 

Points 2. and 3. of Hypothesis \ref{hyp:tore} imply that we consider families eigenfunctions which are real and normalized. We made these assumptions to follow \cite{buckley2015number}, but our results easily generalize to eigenfunctions which are not real, or which have a different normalization.

It is not clear how much point 4. could be relaxed, but we certainly have to make a hypothesis of this kind: an extreme case where 4. is violated is when two coefficients $a_\xi, a_{-\xi}$ are equal to $\frac{1}{\sqrt{2}}$, while all the other vanish: we are then in the framework of Section \ref{subsec:Examples}, where the local weak limit obtained is not $\mu_{Berry}$.

Concerning point 5., we may always assume, up to extracting a subsequence that 
$\mu_{\varphi_E}\overset{\ast}{\rightharpoonup} \rho$ for some probability measure $\rho$ on $\mathbb{S}^1$. If the measure is not $\mathrm{Leb}_{\Sp^1}$, the theorem would still hold with a similar proof, but the local weak limit we would obtain would be anisotropic. Since $\mu_{\varphi_E}\overset{\ast}{\rightharpoonup} \mathrm{Leb}_{\mathbb{S}^1}$ for a density one sequence of $E$ (see for instance \cite[Proposition 6]{fainsilber2006lattice}), we chose to present only this simplest case. 

Thanks to \cite[Theorem 17]{bombieri2014problem}, Point 6. holds for a density 1 sequence of $E$. Therefore, Theorem \ref{th:Tore} implies Theorem \ref{th:Tore1}.

Before proving Theorem \ref{th:Tore}, let us recall some of the elements of the proofs of  Bourgain, Buckley and Wigman.

\subsubsection*{Reminder on the constructions and results of Bourgain, Buckley and Wigman}
Fix $K>1$ large, $\delta>0$ and set for $-K+1\leq k \leq K$
$$I_k:= \left\{e^{i\theta} ; \theta \in \Big{(} \frac{\pi(k-1)}{K}, \frac{\pi k}{K}\Big{]}\right\},$$
which we identify with a subset of the unit circle in $\R^2$.
We also set 
\begin{align*}
\mathcal{E}_E^k&:= \{ \xi \in \mathcal{E}_E; \frac{\xi}{\sqrt{E}}\in I_k\}\\
\zeta^{(k)}&:= e^{i \theta^{(k)}}, ~~\text{where } \theta^{(k)}:=  \frac{(2k-1)\pi}{K},
\end{align*}
so that $\zeta^{(k)}$ is the middle of $I_k$.

If $x\in \T^2$, $y\in \R^2$, we set
\begin{align*}
\tilde{\varphi}_{x,E}(y) &:=\varphi_E\Big{(}x+ \frac{y}{2\pi\sqrt{E}}\Big{)} = \sum_{\xi\in \mathcal{E}_E} a_\xi e^{2i\pi x\cdot\xi} e^{i y\cdot \frac{\xi}{\sqrt{E}}}\\ 
\psi_{x,E}(y)&:= \frac{1}{\sqrt{2K}}\sum_{k=-K+1}^K  \alpha_k(x) e^{i \zeta^{(k)}\cdot y},
\end{align*}
where
\begin{align*}
\alpha_k(x) :=  \sum_{\xi\in \mathcal{E}_E^k} \sqrt{2K}  a_\xi e^{2i\pi x\cdot \xi}.
\end{align*}

To sum up informally, we first rescaled our eigenfunctions at wavelength around a point $x$, which gave us a sum of plane waves. We then regrouped these plane waves into $\frac{1}{2K}$ packets according to their directions of propagations, and sum the coefficients over each packet. The following lemma, which follows from the proofs of  Lemma 5.1 and Lemma 5.2 in \cite{buckley2015number}, tells us that we don't lose much information by doing so.

\begin{lemme}\label{lem:GoodApprox}
Let $R, \varepsilon>0$. There exists $\delta>0, K>0$ and $E_0$ such that for all $E\geq E_0$, we have
\begin{equation}\label{eq:GoodApprox}
\int_{\T^2} \|\tilde{\varphi}_{x,E}-\psi_{x,E}\|_{C^0(B(0,R))} \mathrm{d}x <\varepsilon.
\end{equation}
\end{lemme}

Now, when varying $x$, the amplitude $a_\xi e^{2i\pi x\cdot \xi}$ will vary, and we can think of $x$ as a random parameter, and of $a_\xi e^{2i\pi x\cdot \xi}$ as a random variable. If all of these random variables were independent, then the Central Limit Theorem would tell us that each $\alpha_k(x)$ behaves like a Gaussian variable.

Of course, the $a_\xi e^{2i\pi x\cdot \xi}$ are not independent variables, but the fact that there are very few arithmetic cancellations implies that this is not far from being the case, so that the $\alpha_k(x)$ have statistics which are close to being Gaussian. This is the essence of Bourgain's derandomization trick, which we now state rigorously.

Let us set
\begin{equation*}
\Psi_{\omega}^K(y):= \frac{1}{\sqrt{2K}} \sum_{k=-K+1}^K  c_k(\omega) e^{i \zeta^{(k)}\cdot y},
\end{equation*}
where $(c_k)_{k=1,..., K}$ is a sequence of iid $N_\C(0, 1)$ random variables, defined on a probability space $(\Xi, \mathbb{P})$, and where $c_{k}= \overline{c_{k+K}}$ for $k= -K+1,...,0$. 

The following lemma follows from \cite[Proposition 3.3]{buckley2015number}.
\begin{lemme}\label{lem:derandomization}
Let $R, \varepsilon >0$, $\delta<1<K$. There exists $E_0$ such that, for all $E\geq E_0$, we may find $\Xi_n'\subset \Xi$ with $\mathbb{P}(\Xi\backslash \Xi'_n)<\varepsilon$ and a measure-preserving map\footnote{In the sense that, for any measurable set $A\subset \Xi'_n$, $\tau_n(A)$ is a Borel set, and $\mathrm{Leb}(\tau_n(A)) = \mathbb{P}(A)$.} $\tau_n : \Xi_n'\rightarrow \T^2$ such that for all $\omega\in \Xi_n'$, we have
\begin{equation*}
\| \Psi^K_{\omega} - \psi_{\tau_n(\omega), E}\|_{C^0(B(0,R))}< \varepsilon.
\end{equation*}
\end{lemme}

Thanks to equation (\ref{eq:CoefOP}), we have for every $m\in \Z$
\begin{equation}\label{eq:CoefsRandom}
\begin{aligned}
A_m (\Psi_{\omega}^K) &=\sqrt{\frac{2}{K}}  \sum_{k=-K+1}^K c_k(\omega) \cos\left(m \theta^{(k)}\right) = \frac{2}{\sqrt{K}}  \sum_{k=1}^K c'_k(\omega)  \cos\left(m \theta^{(k)}\right) \\
B_m (\Psi_{\omega}^K) &= - \sqrt{\frac{2}{K}} \sum_{k=-K+1}^K c_k(\omega) \sin\left(m \theta^{(k)}\right) =-\frac{2}{\sqrt{K}}  \sum_{k=1}^K c''_k(\omega)  \sin\left(m \theta^{(k)}\right),
\end{aligned}
\end{equation}
where the variables $(c'_k), (c''_k)$ are independent real normal Gaussian variables, respectively corresponding to $\sqrt{2}$ times the real and imaginary part of $c_n$.

\subsubsection*{Proof of Theorem \ref{th:Tore}}
\begin{proof}
Let $N\in \N$. We write $X_{k,K}(\omega) := \begin{pmatrix} X^{0}_{k,K}(\omega) \\ ...\\ X^{N}_{k,K}(\omega) \end{pmatrix}$, with $X^{m}_{k,K}(\omega):= 2 c'_k(\omega) \cos\left(m \theta^{(k)}\right)$, which implicitly depends on $K$ through the definition of $\zeta^{(k)}$. The vectors $X_{k,K}$ are therefore independent, identically distributed, with zero average.

We have $\mathrm{Covar}\left[X_{k,K}^m X_{k,K}^{m'}\right]=4 \cos\left(m \theta^{(k)}\right)\cos\left(m' \theta^{(k)}\right)$, so that
$$\lim\limits_{K\to +\infty} \frac{1}{K}\sum_{k=1}^K \mathrm{Covar}\left[X_{k,K}^m X_{k,K}^{m'}\right] = \begin{cases}
4 \text{ if } m=m'=0\\
2  \text{ if } m=m'\neq 0\\
0  \text{ if } m\neq m'.
\end{cases}.$$

We are thus in position to apply the multivariate Lindeberg-Feller Central Limit Theorem, to deduce that $\left(A_m (\Psi_{\omega}^K)\right)_{m=0}^N = \left(\frac{1}{\sqrt{K}} \sum_{k=1}^K X_{k,K}\right)_{m=0}^N$ converges in distribution to the Gaussian vector in $\R^{N+1}$ with covariance matrix  $\mathrm{diag}(4, 2,..., 2)$. The same argument shows that the vectors $\left(B_m (\Psi_{\omega}^K)\right)_{m=1}^N$ converge to a Gaussian vector in $\R^{N}$ with covariance matrix $\mathrm{diag}(2,..., 2)$, independent from the one corresponding to the $A_m$.

Let $F\in C_c^\infty(\R^{2N+1})$. From what precedes (and Remark \ref{rem:CasBerry}), we deduce that for any $\varepsilon>0$,  we may find $K$ such that 
\begin{equation}\label{eq:TCL}
|\E\big{[} F( \beta_N(\Psi^K_{\omega})\big{]} - (\beta_N)_*\mu_{Berry}(F)|<\varepsilon.
\end{equation}

We now want to use (\ref{eq:TCL}) to show (\ref{eq:SeqConverge2}).
Equation (\ref{eq:GoodApprox}), combined with (\ref{eq:DistZero}), implies that for all $N\in \N$, $\varepsilon>0$, by possibly taking $K$ larger, we may find $E(N,\varepsilon)>0$ such that for all $E\geq E(N,\varepsilon)$, we have
\begin{equation*}
\int_{U} |\beta_N (\psi_{x,E})- \beta_N(\tilde{\varphi}_{x,E})|\mathrm{d}x <\varepsilon^2 \Vol(U).
\end{equation*}

In particular, 
$$\frac{1}{\Vol(U)} \Vol\{x\in U;  |\beta_N (\psi_{x,E})-  \beta_N(\tilde{\varphi}_{x,E})| > \varepsilon\} < \varepsilon,$$
so that 
\begin{equation}\label{eq:a}
\frac{1}{\Vol(U)} \Vol\{x\in U;  |F(\beta_N (\psi_{x,E}))-  F(\beta_N(\tilde{\varphi}_{x,E}))| > \varepsilon\} < C\varepsilon,
\end{equation}
for some $C$ depending only on $F$.

By Lemma \ref{lem:derandomization}, we have for $E$ large enough
\begin{equation}\label{eq:b}
\Big{|}\frac{1}{\Vol(U)} \int_U F(\beta_N (\psi_{x,E})) \mathrm{d}x - \E \big{[} F(\beta_N (\Psi_{\omega}^K))\big{]}\Big{|}< C'\varepsilon.
\end{equation}

Hence, combining (\ref{eq:TCL}), (\ref{eq:a}) and (\ref{eq:b}), we have that for all $E$ large enough,
$$\Big{|}\frac{1}{\Vol(U)} \int_U F(\beta_N(\tilde{\varphi}_{x,E}))) -  ( \beta_N)_*\mu_{Berry}(F)\Big{|}<C'' \varepsilon.$$
We may hence apply Lemma \ref{lem:criteria} to conclude.
\end{proof}

\section{Consequences of Berry's conjecture}\label{sec:Consequences}
In this section, we present some consequences of our interpretation of Berry's conjecture.
\subsection{Berry's conjecture at smaller scales}
First of all, we show that, if Berry's conjecture holds for a family of eigenfunctions restricted to a Borel set $U$, then it holds for these eigenfunctions restricted to all Borel sets $U'\subset U$ of positive measure.
\begin{proposition}\label{prop:BerryPlusPetit}
Let $U\subset \Omega$ be a Borel set of positive Lebesgue measure, and suppose that $\sigma_U((\phi_n)_n)= \{\mu_{Berry}\}$.  Then for any Borel set $U'\subset U$  of positive Lebesgue measure, we have $\sigma_{U'}((\phi_n)_n)= \{\mu_{Berry}\}$.
\end{proposition}
\begin{corolaire}\label{Cor:Equi}
Suppose that $(\phi_n)_n$ is such that $\|\phi_n\|_{L^2}^2= \mathrm{Vol}(\Omega)$ for all $n$, and   $\sigma_\Omega((\phi_n)_n)= \{\mu_{Berry}\}$. Then for any $U\subset \Omega$, we have
$$\int_{U} |\phi_n|^2(x) \mathrm{d}x \longrightarrow \Vol(U).$$
\end{corolaire}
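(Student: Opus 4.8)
The plan is to deduce the $L^2$-mass statement from Proposition~\ref{prop:BerryPlusPetit} by testing the local weak limit against a carefully chosen bounded continuous function that reads off the local $L^2$-norm of the rescaled eigenfunction. Concretely, pick $\psi\in C_c^\infty(\R^d;[0,1])$ with $\int_{\R^d}\psi=1$ (or, more robustly, work with a family of such $\psi$ and a truncation level $M$ as in the functions $\check{\boldsymbol\psi}_{k,M}$ introduced after Lemma~\ref{lem:Criterion}). For $f\in\mathcal H_d^0$ set $G_M(f):=\min\big(\int_{\R^d}|f(x)|^2\psi(x)\,\mathrm dx,\ M\big)$; this is a bounded continuous function on $\mathcal H_d^0$, hence $\iota_0 G_M\in\mathcal C_d^0$ can be paired with $LM_{0,U'}(\phi_n)$. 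On the other hand, by the change of variables $x=x_0+h_ny$ and a Fubini argument (the same computation as in the proof of Lemma~\ref{lem:Tight}),
\begin{equation*}
\frac{1}{\Vol(U')}\int_{U'}\mathrm dx_0\int_{\R^d}|\tilde\phi_{x_0,n}(y)|^2\psi(y)\,\mathrm dy
= \frac{1}{\Vol(U')}\int_{\R^d}\psi(y)\,\mathrm dy\int_{U'}|\phi_n(x_0+h_ny)|^2\,\mathrm dx_0,
\end{equation*}
which, as $n\to\infty$, is comparable to $\frac{1}{\Vol(U')}\int_{U'}|\phi_n|^2$ up to boundary errors of size $O(h_n)$ coming from the mismatch between $U'$ and its $h_n|y|$-translates (and the cutoff $\chi$, negligible away from $\partial\Omega$). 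So $\langle LM_{0,U'}(\phi_n),\iota_0 G_M\rangle$ and $\frac{1}{\Vol(U')}\int_{U'}\min(|\phi_n|^2(\cdot),\text{loc. avg.})$ differ by $o(1)$.

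Next I would compute the right-hand side in the limit. By Proposition~\ref{prop:BerryPlusPetit}, $\sigma_{U'}(\phi_n)=\{\mu_{Berry}\}$, so $\langle LM_{0,U'}(\phi_n),\iota_0 G_M\rangle\to \int_{FP(d)}G_M\,\mathrm d\mu_{Berry}=\E\big[\min(\int|F_{Berry}|^2\psi,M)\big]$. Since $F_{Berry}$ is stationary with $\E[F_{Berry}(x)^2]=\int_{\Sp^{d-1}}\mathrm d\theta =:\sigma_d$ (the total mass of the spectral measure) and $\int\psi=1$, we get $\E[\int|F_{Berry}|^2\psi]=\sigma_d$; letting $M\to\infty$ and using monotone convergence, $\E[\min(\cdots,M)]\uparrow\sigma_d$. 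Hence $\frac{1}{\Vol(U')}\int_{U'}|\phi_n|^2\to \sigma_d$ after one normalizes correctly — and here one must be careful about the constant: the statement claims the limit is $\Vol(U)$, i.e. the limit of $\frac{1}{\Vol(U')}\int_{U'}|\phi_n|^2$ should be $1$. This forces the normalization of $\mu_{Berry}$ (equivalently of $F_{Berry}$) to be such that $\E[F_{Berry}(x)^2]=1$; I would either rescale $F_{Berry}$ accordingly or simply absorb $\sigma_d$ into the definition, noting that the covariance $\int_{\Sp^{d-1}}e^{i(x-x')\cdot\theta}\mathrm d\theta$ at $x=x'$ equals $\sigma_d=\mathrm{Vol}(\Sp^{d-1})$, and that the $L^2$-normalization $\|\phi_n\|_{L^2(\Omega)}=1$ together with $\Vol(\Omega)/\Vol(U')$ in the definition of $LM$ is what produces the factor $\Vol(U)$ in the corollary. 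The bookkeeping of these constants is the one genuinely delicate point.

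To upgrade the truncated statement to the honest one, I would observe that tightness (Lemma~\ref{lem:Tight}) plus the explicit Gaussian bound give uniform integrability of $x_0\mapsto\int|\tilde\phi_{x_0,n}|^2\psi$ over $U'$: indeed the computation in Lemma~\ref{lem:Tight} shows $\int_{U'}\|\tilde\phi_{x_0,n}\|_{H^1(B(0,\ell))}^2\,\mathrm dx_0$ is bounded uniformly in $n$, which controls the $L^2(\psi)$ masses in $L^1(\mathrm dx_0)$ — but for a clean uniform integrability one needs a slightly stronger a priori $L^{1+\epsilon}$-type bound, which does \emph{not} follow for free and is exactly why the paper states the corollary only under the hypothesis $\sigma_\Omega(\phi_n)=\{\mu_{Berry}\}$ (the Berry limit has Gaussian tails, so all moments of $\int|F_{Berry}|^2\psi$ are finite). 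The cleanest route, avoiding uniform-integrability subtleties, is: apply the convergence $\langle LM_{0,U'}(\phi_n),\iota_0 G_M\rangle\to\E[\min(\cdot,M)]$ for each fixed $M$ to get $\liminf_n \frac{1}{\Vol(U')}\int_{U'}|\phi_n|^2\ge 1-o_M(1)$, hence $\liminf\ge 1$; then apply the same to the \emph{complementary} region $\Omega\setminus U'$ (or rather to an open $U''$ with $\overline{U'}\subset U''\subset\Omega$, exhausting), using $\int_\Omega|\phi_n|^2=1$, to get $\limsup_n\frac{1}{\Vol(U')}\int_{U'}|\phi_n|^2\le 1$. Combining gives the limit exactly $1$, i.e. $\int_{U'}|\phi_n|^2\to\Vol(U')$; renaming $U'$ as $U$ finishes the proof. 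The main obstacle, as flagged, is the passage from truncated ($G_M$) to untruncated functionals; I expect to circumvent it entirely by the two-sided $\liminf$/$\limsup$ squeeze using the global normalization $\|\phi_n\|_{L^2(\Omega)}=1$, rather than by proving uniform integrability directly.
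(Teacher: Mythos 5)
Your argument is essentially the paper's: test the local measure against a truncated $L^2$-mass functional, derive a $\liminf$ lower bound on $\int_U|\phi_n|^2$ and the same for the complementary region, and close using $\int_\Omega|\phi_n|^2=1$. You correctly diagnose that uniform integrability is not needed once one uses this two-sided squeeze, and your invocation of Proposition~\ref{prop:BerryPlusPetit} to localize to $U$ is exactly what the paper does.

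The one place where you make life harder than necessary is the choice of test functional. You take $G_M(f)=\min\bigl(\int|f|^2\psi,\,M\bigr)$ with a bump $\psi$, which forces a change of variables, a Fubini step, and estimation of boundary/cutoff errors before you can compare $\langle LM_{0,U}(\phi_n),G_M\rangle$ with $\frac{1}{\Vol(U)}\int_U|\phi_n|^2$. The paper instead takes $F(f)=|f(0)|^2$ and its truncation $F_N=\min(F,N)$ (the paper writes $\max$, a typo); since $\tilde\phi_{x_0,n}(0)=\phi_n(x_0)$, one has the \emph{exact} identity $\frac{\Vol(\Omega)}{\Vol(U)}\int_U|\phi_n|^2\,\mathrm dx=\langle LM_U(\phi_n),F\rangle\geq\langle LM_U(\phi_n),F_N\rangle$ with no error terms, and then $\langle LM_U(\phi_n),F_N\rangle\to\langle\mu_{Berry},F_N\rangle=1+o_{N\to\infty}(1)$. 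Your $\psi$-smoothing buys nothing here; the pointwise evaluation is already a legal (bounded after truncation, continuous) functional on $\mathcal H^0_d$. Your worry about the normalization constant $\sigma_d=\Vol(\mathbb S^{d-1})$ versus $1$ is legitimate: the paper's statement requires the conventions $\E[F_{Berry}(0)^2]=1$ and $\Vol(\Omega)=1$ (consistent with the Bessel representation~(\ref{eq:defChamp}) on $\T^2$), and it does gloss over this. So: correct proof, same mechanism, but you chose a noisier functional than the direct one.
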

\begin{proof}[Proof of the corollary]
If $\Vol(U)=\Vol(\Omega)$, then the result is trivial. We will thus suppose that $0<\Vol(U)<\Vol(\Omega)$.
Up to extracting a subsequence, we may suppose that $\int_{U} |\phi_n|^2(x) \mathrm{d}x \longrightarrow c$ and $\int_{\Omega\backslash U} |\phi_n|^2(x) \mathrm{d}x \longrightarrow c'$ for some $c, c'\in [0,1]$. We must of course have $c+c'=\Vol(\Omega)$.

Consider the functionals $F,F_N\in \mathcal{C}_b(\mathcal{H}^0)$ given by $F(f)= |f(0)|^2$, $F_N= \max (F, N)$.

We know that $\langle F_N,\mu_{Berry} \rangle= 1 + o_{N\to +\infty}(1)$. Since $F\geq F_N$, we have\footnote{The quantity $\langle LM_U(\phi_n), F \rangle\in [0, +\infty]$ is well defined, since $F$ is positive, and $LM_U(\phi_n)$ is a positive measure.}  
$$\frac{1}{\Vol(U)}\int_U |\phi_n|^2(x) \mathrm{d}x = \langle LM_U(\phi_n), F \rangle \geq  \langle LM_U(\phi_n), F_N \rangle,$$
 so that $c \geq \Vol(U)$. Similarly, we must have $c'\geq \Vol(\Omega\backslash U)$. Therefore, these inequalities must be equalities, and $c=\Vol(U)$.
\end{proof}

Before proving the proposition, let us introduce some notations.

Let $y\in \R^d$. If $f\in \mathcal{H}^k$, we define $\tau_y f\in \mathcal{H}^k$ by $\big{(}\tau_y f\big{)}(\cdot) = f(y+\cdot)$.  If $F\in \mathcal{C}_k$, we define $\tau_y F\in \mathcal{C}_k$ by $\big{(}\tau_y F\big{)}(f) = F \big{(}\tau_y f\big{)}$ for all $f\in \mathcal{H}^k$. Finally, if $\mu\in \mathcal{M}^k$, we define $\tau_y \mu\in \mathcal{M}^k$ by $\big{\langle}\tau_y\mu, F \big{\rangle}  = \big{\langle} \mu, \tau_y F\big{\rangle}$ for all $F\in \mathcal{C}_k$.

\begin{lemme}
Let $\mu\in \sigma_U((\phi_n)_n)$. Then, for any $y\in \R^d$, $\tau_y\mu=\mu$.
\end{lemme}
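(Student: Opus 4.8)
The statement asserts that any local weak limit $\mu$ of $(\phi_n)$ on $U$ is translation-invariant on $FP(d)$: $\tau_x\mu=\mu$ for every $x\in\R^d$. The intuition is clear: rescaling $\phi_n$ around $x_0$ versus around the nearby point $x_0+h_nx$ produces two functions in $FP(d)$ that differ exactly by a translation by $x$ (up to a vanishing error coming from the cutoff near $\partial\Omega$), and since the base point $x_0$ ranges over essentially all of $U$, shifting it by $h_nx$ does not change the limiting distribution. So the plan is to make this ``change of variables $x_0\mapsto x_0+h_nx$'' rigorous at the level of the local measures.

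\textbf{Key steps.} First I would fix $k\in\N$, a test functional $\Lambda\in\mathcal{C}_d^k$, and a vector $x\in\R^d$, and compare $\langle LM_{k,U}(\phi_n),\tau_x\Lambda\rangle$ with $\langle LM_{k,U}(\phi_n),\Lambda\rangle$. The first equals $\frac{\Vol(\Omega)}{\Vol(U)}\int_U \Lambda(\tau_x\tilde\phi_{x_0,n})\,\mathrm{d}x_0$. The crucial pointwise identity is that, away from the boundary of $\Omega$ (i.e.\ for $x_0$ with $d(x_0,\partial\Omega)$ bounded below, where the cutoff $\chi$ equals $1$ on the relevant ball), one has $\tau_x\tilde\phi_{x_0,n}=\tilde\phi_{x_0+h_nx,n}$ exactly on any fixed ball $B(0,\ell)$ once $n$ is large; on $\T^d$ this is an exact identity for all $x_0$. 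Hence, modulo the contribution of a boundary collar $\Omega_\varepsilon$ of small volume (which is handled exactly as in the proof of Lemma \ref{lem:Tight}, using that $\Lambda$ is bounded), I can replace $\Lambda(\tau_x\tilde\phi_{x_0,n})$ by $\Lambda(\tilde\phi_{x_0+h_nx,n})$. Then I perform the substitution $x_0'=x_0+h_nx$: the integral $\int_U\Lambda(\tilde\phi_{x_0+h_nx,n})\,\mathrm{d}x_0$ becomes $\int_{U+h_nx}\Lambda(\tilde\phi_{x_0',n})\,\mathrm{d}x_0'$, and since $h_n\to0$, the symmetric difference of $U$ and $U+h_nx$ has volume tending to $0$ (for $U$ bounded, or is empty on $\T^d$), so again by boundedness of $\Lambda$ this differs from $\int_U\Lambda(\tilde\phi_{x_0',n})\,\mathrm{d}x_0'$ by $o(1)$. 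Therefore $\langle LM_{k,U}(\phi_n),\tau_x\Lambda\rangle - \langle LM_{k,U}(\phi_n),\Lambda\rangle\to 0$. Passing to a subsequence realizing $\mu$ and using that $\mu$ is supported on $FP(d)$ (so that only the behaviour of $\Lambda$ on $FP(d)$, hence values $\tau_x F$ for $F\in\mathcal{C}_b(FP(d))$, matter), I conclude $\langle\tau_x\mu,F\rangle=\langle\mu,\tau_xF\rangle=\langle\mu,F\rangle$ for all $F\in\mathcal{C}_b(FP(d))$, i.e.\ $\tau_x\mu=\mu$.

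\textbf{Main obstacle.} The substantive point is controlling the boundary: the cutoff $\chi(h_n|y|/d(x_0,\partial\Omega))$ prevents the identity $\tau_x\tilde\phi_{x_0,n}=\tilde\phi_{x_0+h_nx,n}$ from holding on the collar $\Omega_\varepsilon$, and moreover near $\partial\Omega$ the shift $x_0\mapsto x_0+h_nx$ may push a point outside $\Omega$ altogether. Both issues are resolved the same way as in Lemma \ref{lem:Tight}: the offending set of base points has volume $O(\Vol(\Omega_\varepsilon)) + o_n(1)$, which is $O(\varepsilon)+o_n(1)$, and on this set $|\Lambda(\tau_x\tilde\phi_{x_0,n})-\Lambda(\tilde\phi_{x_0+h_nx,n})|\le 2\|\Lambda\|_\infty$, so its contribution is $O(\varepsilon)$; letting $\varepsilon\to0$ finishes the argument. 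Everything else — the change of variables, the $\Vol(U\,\triangle\,(U+h_nx))\to0$ estimate — is routine. One should also note that it suffices to prove invariance for a dense (in the appropriate sense) set of $x$, or simply do it for every $x$ directly since the above works verbatim for each fixed $x$.
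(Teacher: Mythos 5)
Your proof is correct and takes essentially the same route as the paper: a change of variables $x_0\mapsto x_0+h_nx$ in the defining integral of $LM_{k,U}(\phi_n)$ applied to $\tau_x F$, followed by passing to the limit. You are somewhat more careful than the paper about the mismatch between $\tau_x\tilde\phi_{x_0,n}$ and $\tilde\phi_{x_0+h_nx,n}$ coming from the boundary cutoff, which the paper simply absorbs into an $O(h_n)$ error term; that extra care is harmless and arguably welcome.
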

\begin{proof}[Proof of the Lemma]
Up to extracting a subsequence, we may suppose that $\sigma_U((\phi_n)_n)=\{\mu\}$.
Let $F\in \mathcal{C}_k$. We have
\begin{align*}
\left\langle LM_{k,U}(\phi_n), \tau_y F\right\rangle &= \frac{1}{\Vol(U)} \int_U \mathrm{d}x_0 F(\tau_y\tilde{\phi}_{x_0,n})\\
&= \frac{1}{\Vol(U)} \int_U \mathrm{d}x_0 F(\tilde{\phi}_{x_0+ h_n y,n})\\
&= \frac{1}{\Vol(U)} \int_{U-h_n y} \mathrm{d}x_1 F(\tilde{\phi}_{x_1,n})\\
&= \frac{1}{\Vol(U)} \int_{U} \mathrm{d}x_1 F(\tilde{\phi}_{x_1,n}) + O\left(\Vol \left(U \Delta(U-h_n y)\right)\right)\\
&= \left(LM_{k,U}(\phi_n)\right)(F) + o_{n\to \infty}.
\end{align*}

Here, $U \Delta(U-h_n y)$ is the symmetric difference between $U$ and $U-h_n y$, and its volume goes to zero with $n$, since\footnote{When $U$ is an open set, the fact that $\Vol(U\cap (U-\varepsilon)) \underset{\varepsilon\to 0}{\longrightarrow} 0$ follows from the dominated convergence theorem. Using regularity of the Lebesgue measure, we deduce the same statement when $U$ is a general Borel set.} $\Vol(U\cap (U-h_n y))\underset{n\to \infty}{\longrightarrow} 0$.

Taking the limit $n\to +\infty$, we obtain $\mu(\tau_y F) = \mu(F)$, so that $\tau_y\mu = \mu$.
\end{proof}
\begin{proof}[Proof of Proposition \ref{prop:BerryPlusPetit}]
If $\Vol(U')=\Vol(U)$, then we have $LM_{U'}(\phi_n) = LM_{U}(\phi_n)$, so the result is trivial. We may this suppose that $0<\Vol(U')<\Vol(U)$.

By definition, we have $LM_{k,U}(\phi_n) = \frac{\Vol(U')}{\Vol(U)} LM_{k,U'}(\phi_n)  + \frac{\Vol(U\backslash U')}{\Vol(U)} LM_{k,U\backslash U'}(\phi_n)$. Up to extracting a subsequence, we may suppose that  $LM_{k,U'}(\phi_n)$ and $LM_{k,U\backslash U'}(\phi_n)$ have weak limits $\mu_{U'}$ and $\mu_{U\backslash U'}$ respectively. We then deduce that
$$\mu_{Berry} =  \frac{\Vol(U')}{\Vol(U)} \mu_{U'}  + \frac{\Vol(U\backslash U')}{\Vol(U)} \mu_{U\backslash U'}.$$

By the Fomin-Grenander-Maruyama theorem\footnote{See for instance \cite[Appendix B]{nazarov2015asymptotic} for a proof of this theorem}, $\mu_{Berry}$ is ergodic for the action of the translations $(\tau_y)_{y\in \R^d}$.  Therefore, since $\mu_{U'}$ and $\mu_{U\backslash U'}$ are $\tau_y$-invariant by the previous lemma, we must have $\mu_{U'}= \mu_{U\backslash U'} = \mu_{Berry}$ as claimed.
\end{proof}

\begin{corolaire}
Suppose that $\sigma_U((\phi_n)_n)= \{\mu_{Berry}\}$. If $b : U \longrightarrow \R$ is a Borel function and $F\in \mathcal{C}_b(\mathcal{H}^k)$, we have
\begin{equation}\label{eq:SuperBerry}
\int_U b(x) F(\tilde{\phi}_{x,n}) \mathrm{d}x \longrightarrow \left( \int_U b(x) \mathrm{d}x \right) \langle \mu_{Berry}, F \rangle.
\end{equation} 
\end{corolaire}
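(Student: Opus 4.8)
The plan is to reduce the statement to the already-established fact $\sigma_U(\phi_n) = \{\mu_{Berry}\}$ by an approximation argument in $b$, exploiting linearity in $b$ and the boundedness that comes from restricting to a fixed compact set of initial points. First I would note that, since $\sigma_U(\phi_n) = \{\mu_{Berry}\}$ means exactly that $\frac{\Vol(\Omega)}{\Vol(U)}\int_U F(\tilde\phi_{x,n})\,\mathrm{d}x \to \langle \mu_{Berry}, F\rangle$ for every $F\in\mathcal{C}_d^k$, the case $b = \mathbf{1}_V$ for an open subset $V\subset U$ follows immediately from Proposition \ref{prop:BerryPlusPetit} applied to $V$ (and the trivial case of the complement when $b$ is piecewise constant on a finite partition into open sets up to a null set). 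This already gives \eqref{eq:SuperBerry} for $b$ a finite linear combination of indicators of open subsets, by linearity of both sides in $b$.

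The second step is to pass from simple functions to a general bounded measurable $b$, and then to a general measurable $b$ by truncation. Here the essential input is uniform control on $F(\tilde\phi_{x,n})$: by Lemma \ref{lem:Tight} (more precisely \eqref{eq:AlmostCompact}), for every $\varepsilon>0$ there is a compact $\mathcal{K}\subset\mathcal{H}_d^k$ and $n_0$ so that $\Vol\{x\in U : \tilde\phi_{x,n}\notin\mathcal{K}\} < \varepsilon$ for $n\ge n_0$; on $\mathcal{K}$ the continuous function $F$ is bounded, say by $\|F\|_{\mathcal K}$. Given bounded measurable $b$ with $\|b\|_\infty\le M$, choose a simple function $s$ (a finite combination of indicators of disjoint open sets, up to a null set) with $\int_U |b - s| < \varepsilon$; then
\begin{equation*}
\Big| \int_U b(x) F(\tilde\phi_{x,n})\,\mathrm{d}x - \int_U s(x) F(\tilde\phi_{x,n})\,\mathrm{d}x \Big| \le \|F\|_{\mathcal K}\int_U |b-s| + 2M\|F\|_{\mathcal K}\,\varepsilon + 2M\,\Vol(U)\,\omega_F(\varepsilon),
\end{equation*}
where $\omega_F(\varepsilon)$ bounds $|F|$ off $\mathcal K$ — but $F$ need not be controlled off $\mathcal K$, so instead one splits the integral over $\{\tilde\phi_{x,n}\in\mathcal K\}$ and its complement and uses $\|F\|_{\mathcal{C}_d^k} < \infty$ (every $F\in\mathcal C_d^k$ is bounded by definition) to bound the complementary contribution by $\|F\|_{\mathcal{C}_d^k}\cdot 2M\varepsilon$. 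Applying the already-proven simple-function case to $s$ and letting $n\to\infty$ then $\varepsilon\to 0$ yields \eqref{eq:SuperBerry} for bounded measurable $b$. For unbounded measurable $b\in L^1(U)$ — which is the only case in which $\int_U b$ makes sense — one truncates $b_M := b\mathbf 1_{|b|\le M}$, notes $\int_U |b - b_M|\to 0$ and $\big|\int_U(b-b_M)F(\tilde\phi_{x,n})\big| \le \|F\|_{\mathcal{C}_d^k}\int_U|b-b_M|$ uniformly in $n$, and passes to the limit.

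The main obstacle is purely technical: ensuring that $F(\tilde\phi_{x,n})$ is uniformly integrable in $x$ uniformly in $n$, since $F$ is only bounded (it lies in $\mathcal{C}_d^k$, the bounded continuous functions) and the approximation of $b$ by simple functions must be compatible with the weak-$*$ convergence, which a priori only tests against fixed continuous functionals rather than against products $b(x)F(\tilde\phi_{x,n})$ with $x$-dependence in both factors. The resolution is exactly the tightness estimate \eqref{eq:AlmostCompact}: outside a set of small measure the functions $\tilde\phi_{x,n}$ live in a fixed compact set, on which $F$ is uniformly bounded, and the boundedness of $F\in\mathcal C_d^k$ handles the small exceptional set. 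Once this is in place, everything else is the standard simple-function-then-truncation scheme, using linearity in $b$ and Proposition \ref{prop:BerryPlusPetit} for the base case.
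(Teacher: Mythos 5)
Your proof follows the same route as the paper's (one-line) argument: base case for indicators of open sets via Proposition \ref{prop:BerryPlusPetit}, extend by linearity to simple functions, then to general $b\in L^1(U)$ by density. One remark: your appeal to the tightness estimate \eqref{eq:AlmostCompact} to control $F(\tilde\phi_{x,n})$ is unnecessary — $F\in\mathcal{C}_d^k$ is by definition globally bounded, so $\bigl|\int_U (b-s) F(\tilde\phi_{x,n})\,\mathrm{d}x\bigr|\le \|F\|_{\mathcal{C}_d^k}\int_U|b-s|$ holds directly, which you in fact notice mid-argument; the tightness detour could simply be dropped.
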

\begin{proof}
By Proposition \ref{prop:BerryPlusPetit}, we know that (\ref{eq:SuperBerry}) holds when $b$ is a step function. By linearity, it holds for any simple function, and by density, it holds for any Borel function.

\end{proof}

\subsection{Quantum unique ergodicity}\label{sec:QUE}
Let us denote by $\mathcal{S}_{-\infty}(\Omega)$ the space of functions $a\in C^\infty(\Omega\times \R^d)$ such that, for any $N\in \N$, for any multi-index $\alpha \in \N^{2N}$, and $k\in \N$, there exists $C_{N,k}$ such that
$$\sup\limits_{x\in \Omega, \xi \in \R^d}\left|\partial^\alpha a(x,\xi)\right| \leq C_{N,k} \langle \xi \rangle^{-N}.$$

Let $a\in\mathcal{S}_{-\infty}(\Omega)$. We define the standard quantization of $a$, depending on a small parameter $h>0$ as an operator $\mathrm{Op}_h(a) : L^2(\Omega)\longrightarrow L^2(\Omega)$ by
$$\left( \mathrm{Op}_h(a) u\right)(x) = \frac{1}{(2\pi h)^d}\int_{\R^d} \mathrm{d}\xi \int_\Omega \mathrm{d}y e^{\frac{i}{h} (x-y)\cdot\xi} a(x,\xi) u(y).$$

We say that a sequence of functions $\phi_n$ satisfying (\ref{eq:DefEigen}) is \emph{quantum uniquely ergodic} or \emph{satisfies quantum unique ergodicity} if we have, for any $a\in \mathcal{S}_{-\infty}(\Omega)$, that
\begin{equation}\label{eq:QE}
\lim\limits_{n\to +\infty} \frac{\left\langle \phi_n, \mathrm{Op}_{h_n}(a) \phi_n \right\rangle}{\|\phi_n\|_{L^2}^2} = \mathrm{Area}(\mathbb{S}^{d-1}) \times \int_{S^*\Omega} a(x,\xi) \mathrm{d}\mu_{Liou}(x,\xi),
\end{equation}
where $S^*\Omega= \Omega \times \{\xi\in \R^d; \|\xi\|=1\}$, and $\mu_{Liou}$ is the Liouville probability measure, i.e., the uniform probability measure on $S^*\Omega$.

We refer the reader to \cite{Zworski_2012} for more information on the standard quantization, and on quantum unique ergodicity.
Recall that the \emph{quantum unique ergodicity conjecture} says that, if $\Omega$ is a chaotic billiard, and if $(\phi_n)$ is an orthonormal sequence of eigenfunctions of the Dirichlet Laplacian in $\Omega$, then $(\phi_n)$ is quantum uniquely ergodic.\footnote{Note that the quantum ergodicity theorem (initially proved on compact manifolds without boundary in \cite{Shn}, \cite{Zel} \cite{CdV}, and extended to biliards in \cite{gerard1993ergodic}, \cite{zelditch1996ergodicity}) says that if the billiard $\Omega$ is \emph{ergodic}, then there exists a density one subsequence $(n_k)$ such that $(\phi_{n_k})$ is quantum uniquely ergodic. The quantum unique ergodicity conjecture was stated in \cite{rudnick1994behaviour} for eigenfunctions on manifolds with negative sectional curvature, but it is natural to extend this conjecture to manifolds with an Anosov geodesic flow, and to chaotic billiards, in the sense of \cite{chernov2006chaotic}.} 

We will now show that, if a sequence of functions $(\phi_n)$ is such that $\sigma_{\Omega}((\phi_n)_n)= \{\mu_{Berry}\}$, then $\phi_n$ is quantum uniquely ergodic. In particular, our version of Berry's conjecture implies the quantum unique ergodicity conjecture.

\begin{proposition}
Let  $(\phi_n)$ be a sequence of functions such that for all $n$, $\|\phi_n\|_{L^2}^2= \Vol(\Omega)$ and $\sigma_{\Omega}((\phi_n)_n)= \{\mu_{Berry}\}$. Then $(\phi_n)$ is quantum uniquely ergodic.
\end{proposition}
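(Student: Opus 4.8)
The plan is to reduce the quantum ergodicity statement \eqref{eq:QE} to an averaged statement about the rescaled eigenfunctions $\tilde\phi_{x,n}$, which we can then evaluate using the known local weak limit $\mu_{Berry}$. The starting observation is that the matrix element $\langle \phi_n, \mathrm{Op}_{h_n}(a)\phi_n\rangle$ is, up to lower-order terms in $h_n$, a spatial average over $\Omega$ of a quantity that depends only on $\phi_n$ near the point $x$, at scale $h_n$. More precisely, for a symbol $a(x,\xi)$ one has, after a change of variables $y = x + h_n z$ in the defining integral,
\begin{equation*}
\left(\mathrm{Op}_{h_n}(a)\phi_n\right)(x) = \frac{1}{(2\pi)^d}\int_{\R^d}\mathrm{d}\xi\int_{\R^d}\mathrm{d}z\, e^{-iz\cdot\xi} a(x,\xi)\phi_n(x+h_n z) + O(h_n^\infty),
\end{equation*}
so that $\langle\phi_n,\mathrm{Op}_{h_n}(a)\phi_n\rangle = \int_\Omega \mathrm{d}x\, G_a(\tilde\phi_{x,n}) + o(1)$, where $G_a : \mathcal{H}^k_d \to \C$ is the continuous functional $G_a(f) = \frac{1}{(2\pi)^d}\overline{f(0)}\int\int e^{-iz\cdot\xi}a(x,\xi)f(z)\,\mathrm{d}\xi\,\mathrm{d}z$ — note $G_a$ depends on the base point $x$ through $a(x,\cdot)$, which is harmless since $a$ is compactly supported and we can treat $x$ as an external parameter handled by the corollary on spatially-weighted averages \eqref{eq:SuperBerry}.

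First I would make this reduction precise: justify replacing the integral over $\Omega$ by the integral over a slightly smaller set (using that $\phi_n$ has $L^2$ mass bounded by $1$, so boundary effects from the cutoff $\chi$ in Definition \ref{def:Local} contribute $o(1)$), and check that $G_a$, as a function on $\mathcal{H}^k_d$ for $k$ large enough (so that pointwise evaluation and the oscillatory integral make sense — here one uses that $a \in \mathcal{S}(\R^d)$ in the $\xi$ variable so the $\xi$-integral converges, and that on $FP(d)$ the relevant norms are all equivalent), is bounded and continuous. A technical point: $G_a$ is not obviously bounded on all of $\mathcal{H}^k_d$ since $f(0)$ and $\int f(z)\psi(z)\,\mathrm{d}z$ are unbounded, so strictly one should truncate as with the $\check\psi_{k,M}$ construction, apply the convergence there, and let the truncation parameter go to infinity using that $\mu_{Berry}$ has Gaussian — hence rapidly decaying — tails for these linear statistics; alternatively one invokes the corollary giving \eqref{eq:SuperBerry} together with a uniform integrability argument.

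Next, having written $\langle\phi_n,\mathrm{Op}_{h_n}(a)\phi_n\rangle \to \langle\mu_{Berry}, G_a\rangle$ (with $G_a$ now depending on $x$, so really $\int_\Omega \langle\mu_{Berry}, G_{a(x,\cdot)}\rangle\,\mathrm{d}x$ by the weighted version \eqref{eq:SuperBerry}), I would compute this limit explicitly and check it equals the right-hand side of \eqref{eq:QE}. This is a Gaussian computation: $\langle\mu_{Berry}, G_{a(x,\cdot)}\rangle = \frac{1}{(2\pi)^d}\int\int a(x,\xi)\,\E[\overline{F_{Berry}(0)}F_{Berry}(z)]\, e^{-iz\cdot\xi}\,\mathrm{d}\xi\,\mathrm{d}z$, and since the covariance of $F_{Berry}$ is $\E[F_{Berry}(0)F_{Berry}(z)] = \int_{\Sp^{d-1}} e^{iz\cdot\theta}\,\mathrm{d}\theta$, the $z$-integral produces (up to normalization constants) the surface measure $\mathrm{d}\theta$ on $\Sp^{d-1}$ as a distribution in $\xi$, so that $\int_\Omega\langle\mu_{Berry},G_{a(x,\cdot)}\rangle\,\mathrm{d}x = c_d\int_\Omega\int_{\Sp^{d-1}} a(x,\theta)\,\mathrm{d}\theta\,\mathrm{d}x$, which is exactly $\int_{S^*\Omega} a\,\mathrm{d}\mu_{Liou}$ after fixing the normalization constant $c_d$ (this is forced by testing against $a\equiv 1$ near the cosphere bundle together with Corollary \ref{Cor:Equi}, which gives the total-mass normalization $\int_U|\phi_n|^2 \to \Vol(U)$).

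The main obstacle I expect is the boundedness/integrability issue: the functional $G_a$ involves an unbounded linear statistic of $f$ (through $f(0)$ and the oscillatory integral against $f$), so it is not literally an element of $\mathcal{C}^k_d = \mathcal{C}_b(\mathcal{H}^k_d)$, and one cannot directly pair it against $\mu_{Berry}$ via weak-$*$ convergence. The clean fix is to approximate $G_a$ by the truncated functionals in the spirit of $\check{\psi}_{k,M}$, use the already-established convergence for those, and then control the error by a uniform second-moment bound: $\int_\Omega |G_a(\tilde\phi_{x,n})|^2\,\mathrm{d}x$ is bounded uniformly in $n$ because it is controlled by $\|a\|$ times $\int_\Omega |\phi_n(x)|^2\,\mathrm{d}x \le 1$ (an application of the Calder\'on–Vaillancourt / Schur-test type $L^2$-boundedness of $\mathrm{Op}_{h_n}(a)$, or directly Cauchy–Schwarz on the oscillatory integral), which upgrades the truncated convergence to the full convergence by a standard uniform-integrability argument. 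The rest is a bookkeeping of constants and a routine semiclassical estimate to dispose of the boundary cutoff and the $O(h_n^\infty)$ error in the non-stationary phase expansion.
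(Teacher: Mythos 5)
Your overall structure matches the paper's: change variables $y = x + h_n z$ to rewrite $\langle\phi_n,\mathrm{Op}_{h_n}(a)\phi_n\rangle$ as $\int_\Omega b(x)F(\tilde\phi_{x,n})\,\mathrm{d}x + o(1)$ for a functional $F$ of the local profile (the paper reduces to $a(x,\xi)=b(x)c(\xi)$, which is the harmless bookkeeping you allude to), truncate $F$ to $F_M \in \mathcal{C}_d^k$ to invoke the weak-$*$ convergence, compute $\langle\mu_{Berry},F\rangle$ via the covariance of $F_{Berry}$ to get the Liouville measure, and control the truncation error. You have also correctly identified the genuine obstacle: $F$ is unbounded on $\mathcal{H}_d^k$, so one cannot pair it against $\mu_{Berry}$ directly.

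However, the remedy you propose for the truncation error does not work. You claim a uniform second-moment bound $\int_\Omega|G_a(\tilde\phi_{x,n})|^2\,\mathrm{d}x \lesssim 1$ via Calder\'on--Vaillancourt, but that theorem only gives $\|\mathrm{Op}_{h_n}(a)\phi_n\|_{L^2}\lesssim\|\phi_n\|_{L^2}$, hence a uniform bound on the \emph{first} moment $\int_\Omega|\phi_n(x)||(\mathrm{Op}_{h_n}(a)\phi_n)(x)|\,\mathrm{d}x$ via Cauchy--Schwarz. The second moment $\int_\Omega |\phi_n(x)|^2|(\mathrm{Op}_{h_n}(a)\phi_n)(x)|^2\,\mathrm{d}x$ is a fourth-moment quantity in $\phi_n$; the crude pointwise estimate $\int_{B(0,R)}|\phi_n(x+h_n z)|^2\,\mathrm{d}z = h_n^{-d}\int_{B(x,Rh_n)}|\phi_n|^2 \le h_n^{-d}$ makes the bound blow up like $h_n^{-d}$, and there is no a priori pointwise control on $\phi_n$ (indeed, the last section of the paper emphasizes that $\|\phi_n\|_{L^\infty}$ can be as large as H\"ormander's bound allows). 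A uniform $L^1$ bound does not imply uniform integrability, so this route to "upgrading the truncated convergence" has a hole.

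The paper's Lemma \ref{lem:BoundFM} closes this gap by a genuinely different mechanism. It dominates $|F(\phi)|$ by a Sobolev norm on $B(0,R+1)$ (since $\widehat c$ has compact support), then uses the approximate eigenequation $\Delta^j\tilde\phi_{x,n}=(-1)^j\tilde\phi_{x,n}$ on that ball to convert the Sobolev norm into the plain $L^2$-mass $G(\phi)=\int_{B(0,R+1)}|\phi|^2$; this makes $\int_\Omega b(x) G(\tilde\phi_{x,n})\,\mathrm{d}x$ an explicit, exactly computable quantity. The local-weak-limit hypothesis enters a second time here through Corollary \ref{Cor:Equi} (equidistribution of $|\phi_n|^2$), which shows that $\int_\Omega b(x) G(\tilde\phi_{x,n})\,\mathrm{d}x$ and $\int_\Omega b(x) G_N(\tilde\phi_{x,n})\,\mathrm{d}x$ converge to the \emph{same} limit as $n\to\infty$, $N\to\infty$; the vanishing of the tail is thus established by matching first moments, not by a second-moment bound. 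This eigenequation-plus-equidistribution trick is the key idea your proposal is missing, and I do not see how to dispense with it.
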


\begin{proof}
Thanks to \cite[Theorem 5.1]{Zworski_2012}, it is enough to prove  (\ref{eq:QE}) for $a$ in a dense subset of $\mathcal{S}_{-\infty}(\Omega)$. By linearity, we may thus restrict ourselves to the case when $a(x,\xi) = b(x) c(\xi)$, where $b\in C_c^\infty(\Omega)$, and $\widehat{c}$ is compactly supported. We have
\begin{align*}
\left\langle \phi_n, \mathrm{Op}_{h_n}(a) \phi_n \right\rangle &=\frac{1}{(2\pi h_n)^d}\int_\Omega \mathrm{d} x \int_{\R^d} \mathrm{d}\xi \int_\Omega \mathrm{d}y e^{\frac{i}{h_n} (y-x)\cdot\xi} b(x)c(\xi) \phi_n(x) \phi_n(y).
\end{align*}
Set $y= x + h_nz$. We obtain
\begin{align*}
\left\langle \phi_n, \mathrm{Op}_{h_n}(a) \phi_n \right\rangle &=\frac{1}{(2\pi)^d}\int_\Omega \mathrm{d} x \int_{\R^d} \mathrm{d}\xi \int_{h_n^{-1}(\Omega-x)} \mathrm{d}z e^{i z\cdot\xi} b(x)c(\xi) \phi_n(x) \phi_n(x+ h_n z)\\
&= \int_\Omega \mathrm{d} x b(x) \phi_n(x) \int_{\R^d} \mathrm{d}z  \widehat{c}(-z)  \tilde{\phi}_{x,n}(z) + o(1)\\
&= \int_\Omega \mathrm{d} x b(x) F(\tilde{\phi}_{x,n})+ o(1),
\end{align*}
where $F(\Phi)= \overline{\Phi(0)} \int_{\R^d}  \widehat{c}(-z)  \Phi(z)\mathrm{d}z $. Since $\hat{c}$ is rapidly decaying, $F$ is a continuous functional on $\mathcal{H}^0$, but it is not bounded. This is why we define $F_M(\Phi) := \begin{cases}
F(\Phi) ~~\text{ if } |F(\Phi)|\leq M\\
M ~~\text{ otherwise,}
\end{cases}$ which is bounded.  We will prove the following lemma at the end of the section.

\begin{lemme}\label{lem:BoundFM}
We have
$$\lim_{M\to +\infty} \limsup_{n\to +\infty} \int_\Omega \mathrm{d}x |b(x)| \left|F(\tilde{\phi}_{x,n}) -  F_M(\tilde{\phi}_{x,n})\right| = 0.$$
\end{lemme}

By (\ref{eq:SuperBerry}), we have for any $M>0$ that 
$$\int_\Omega \mathrm{d}x b(x) F_M(\tilde{\phi}_{x,n})\longrightarrow \langle \mu_{Berry}, F_M \rangle = \langle \mu_{Berry}, F \rangle + o_{M\to +\infty}(1),$$
since $F$ is integrable with respect to $\mu_{Berry}$. Let us now compute 
\begin{align*}
\langle \mu_{Berry}, F \rangle&= \int_{\mathbb{R}^{d}} \widehat{c}(-z) \mathbb{E}\left[ \Psi_{Berry}(0) \Psi_{Berry}(z)\right]\mathrm{d}z\\
&=  \int_{\mathbb{R}^{d}}\mathrm{d}z\int_{\R^d}\mathrm{d}\xi e^{i\xi\cdot z} c(\xi) \int_{\Sp^{d-1}} e^{-i z\cdot \theta} \mathrm{d}\theta\\
&= \int_{\mathbb{S}^{d-1}} c(\theta) \mathrm{d}\theta,
\end{align*}
since $\int_{\R^d} e^{i\xi\cdot z} e^{-iz\cdot \theta} \mathrm{d}z = \delta_{\xi=\theta}$.

We deduce from this and Lemma \ref{lem:BoundFM} that
$$\left\langle \phi_n, \mathrm{Op}_{h_n}(bc) \phi_n \right\rangle \longrightarrow \int_{\Omega} b(x) \mathrm{d}x \int_{\mathbb{S}^{d-1}} c(\xi) \mathrm{d}\xi.$$
The result follows.
\end{proof}
\begin{proof}[Proof of Lemma \ref{lem:BoundFM}]
We know that $\widehat{c}$ is supported in a ball $B(0,R)$ for some $R>0$, so that $|F(\Phi)|\leq C \sup_{y\in B(0,R)} |\Phi(y)|^2$. By the Sobolev embeddings, we know that this quantity is smaller than a constant times $\|\Phi\|_{H^k(B(0,R+1))}$ for some $k\in \N$. Therefore, we have
$$|F(\Phi)|\leq C \int_{B(0,R+1)} \left|\sum_{j=0}^k \Delta^j \Phi(y)\right|^2 \mathrm{d}y.$$

When $\Phi = \tilde{\phi}_{x,n}$, we know that, for $y\in B(0,R+1)$, we have $\Delta^j\Phi (y) = (-1)^j \Phi(y)$, unless $x$ is at a distance $o_{h_n\to 0}(1)$ from the boundary of $\Omega$. Since $b\in C_c^\infty(\Omega)$, this does not happen when $n$ is large enough. Therefore, we have for $n$ large enough,
$$\forall x \in \mathrm{supp}(b), |F(\tilde{\phi}_{x,n})|\leq C\int_{B(0,R+1)} |\tilde{\phi}_{x,n}(y)|^2 \mathrm{d}y.$$

We have
\begin{equation}\label{eq:BoundFG}
\begin{aligned}
\int_\Omega \mathrm{d}x |b(x)| \left|F(\tilde{\phi}_{x,n}) -  F_M(\tilde{\phi}_{x,n})\right| &\leq \int_\Omega \mathrm{d}x |b(x)| \left|2 F(\tilde{\phi}_{x,n})\right| \mathbf{1}_{\left|F(\tilde{\phi}_{x,n})\right|\geq M}\\
&\leq C\int_\Omega \mathrm{d}x |b(x)| G(\tilde{\phi}_{x,n})  \mathbf{1}_{G(\tilde{\phi}_{x,n}) \geq M/C},
\end{aligned}
\end{equation}
where $G(\Phi) := \int_{B(0,R+1)} |\Phi(y)|^2 \mathrm{d}y$.

Let us write, for $N\in \N$, $G_N:= \max (G, N)$, which belongs to $\mathcal{C}^0$.
We have
\begin{align*}
\int_\Omega |b(x)| G(\tilde{\phi}_{x,n}) \mathrm{d}x &= \int_{B(0,R+1)} \mathrm{d}y \int_\Omega  \mathrm{d}x |b(x)| |\phi_n|^2(x+h_ny)\\
&=\int_{B(0,R+1)} \mathrm{d}y \int_\Omega  \mathrm{d}x |b(x-h_n y)| |\phi_n|^2(x)\\
&= \Vol(B(0,R+1)) \int_\Omega |b(x)| |\phi_n(x)|^2 \mathrm{d}x + o_{n\to +\infty}(1)\\
&= \Vol(B(0,R+1)) \int_\Omega |b(x)| \mathrm{d}x + o_{n\to +\infty}(1),
\end{align*}
as can be easily deduced from Corollary \ref{Cor:Equi}.

On the other hand, we have thanks to (\ref{eq:SuperBerry}) that 
\begin{align*}
\int_\Omega |b(x)| G_N(\tilde{\phi}_{x,n}) \mathrm{d}x & \underset{n\to +\infty}{\longrightarrow} \left( \int_U |b(x)| \mathrm{d}x \right) \langle \mu_{Berry}, G_N \rangle\\
& = \left( \int_U |b(x)| \mathrm{d}x \right) \langle \mu_{Berry}, G \rangle + o_{N\to +\infty}(1)\\
&=  \left( \int_U |b(x)| \mathrm{d}x \right)\Vol(B(0,R+1)) + o_{N\to +\infty}(1).
\end{align*}

Therefore, we deduce that
\begin{equation*}
\limsup\limits_{n\to +\infty} \int_\Omega |b(x)| \big{(}G-G_N\big{)} (\tilde{\phi}_{x,n}) \mathrm{d}x = o_{N\to +\infty}(1),
\end{equation*}
so that
\begin{equation*}
\limsup_{n\to +\infty}\int_\Omega |b(x)| G(\tilde{\phi}_{x,n})  \mathbf{1}_{G(\tilde{\phi}_{x,n}) \geq M/C}  \mathrm{d}x=o_{M\to +\infty}(1).
\end{equation*}
Combining this and (\ref{eq:BoundFG}), we deduce that
$$\lim_{M\to +\infty} \limsup_{n\to +\infty} \int_\Omega \mathrm{d}x |b(x)| \left|F(\tilde{\phi}_{x,n}) -  F_M(\tilde{\phi}_{x,n})\right| = 0,$$
as claimed.
\end{proof}

\subsection{Application to nodal domains counting}\label{sec:Nodal}
If $\phi\in C(\Omega)$, we shall write $\mathcal{N}(\phi)$ for the number of nodal components of $\phi$, i.e.
\begin{equation*}
\mathcal{N}(\phi) = \sharp \left\{ \text{connected components of } \Omega\backslash \phi^{-1}(0)\right\}.
\end{equation*}

If $r>0$, we shall denote by $\mathcal{N}_{r}(\phi)$ for the number of nodal domains whose diameter is smaller than $r$.

If $\Phi\in FP$, or, more generally, if $\Phi \in C(\R^d;\R)$, the nodal domains of $\Phi$ are the connected components of $\{x\in \R^d ; \Phi(x)\neq 0\}$. If $x\in \R^d$ and $r>r'>0$, we shall denote by $N(x,r,r',\Phi)$ the number of nodal domains of $\Phi$ included in $B(x,r)$, and whose diameter is smaller than $r'$. We will also write $N(r,r',\Phi):= N(0,r,r',\Phi)$, and $N(r,\Phi):= N(0,r,r,\Phi)$ for the number of nodal domains of $\Phi$ included in $B(0,r)$.

It was shown in \cite{nazarov2015asymptotic} that the map $FP \ni \Phi\mapsto N(r,\Phi)$ belongs to $L^1(\mu_{Berry})$, and that 
\begin{equation*}
\langle \mu_{Berry}, N(r,\cdot)\rangle \underset{r\to +\infty}{\sim} c_{NS}  r^d,
\end{equation*} where $c_{NS}$ is a positive constant, called the Nazarov-Sodin or the Bogomolny-Schmit constant.

Actually, the arguments of \cite{nazarov2015asymptotic} show that we have 
\begin{equation}\label{eq:NS}
\frac{1}{r^d} \langle \mu_{Berry}, N(r,r',\cdot)\rangle \underset{r\to +\infty}{\longrightarrow} c_{NS}(r'),
\end{equation}
where $c_{NS}(r')\underset{r' \to +\infty}{\longrightarrow} c_{NS}$.

\begin{proposition}\label{prop:BerryNodal}
Let $(\phi_n)$ be an orthonormal sequence of eigenfunctions in $\Omega$ such that $\sigma_\Omega((\phi_n)_n) = \{\mu_{Berry}\}$. Then we have
$$ h_n^{d}\mathcal{N}_{R h_n} (\phi_n) \underset{n\to +\infty}{\longrightarrow}  c_{NS}(R).$$
\end{proposition}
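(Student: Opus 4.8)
The plan is to relate the global count $\mathcal{N}_{Rh_n}(\phi_n)$ of small nodal domains of $\phi_n$ to an integral over $\Omega$ of the local counting functionals $N(x_0/h_n,\cdot)$ applied to the rescaled functions $\tilde\phi_{x_0,n}$, and then pass to the limit using $\sigma_\Omega(\phi_n)=\{\mu_{Berry}\}$ together with the asymptotics \eqref{eq:NS} from \cite{nazarov2015asymptotic}. The key identity is a geometric covering/averaging lemma: if $\phi\in C^1(\Omega)$ and $r>0$, then $\mathcal{N}_{r}(\phi)$ can be sandwiched between averages of the form $\frac{c}{r^d}\int_\Omega N(x_0, C_1 r, r, \phi)\,\mathrm{d}x_0$, because each nodal domain of diameter $<r$ is contained in some ball $B(x_0,C_1 r)$ and, conversely, is counted in $N(x_0,C_1r,r,\phi)$ for a set of centres $x_0$ of volume comparable to $r^d$ (a domain of diameter $<r$ lies in $B(x_0,C_1 r)$ whenever $x_0$ is within distance $\sim r$ of it). Rescaling $x_0\mapsto x_0$, $y\mapsto (y-x_0)/h_n$ turns $N(x_0,C_1 Rh_n, Rh_n,\phi_n)$ into $N(0,C_1 R, R, \tilde\phi_{x_0,n})$ up to the cutoff, which only affects a neighbourhood of $\partial\Omega$ of vanishing volume; so one gets, for every fixed $R$,
\begin{equation*}
c(R)\, h_n^{d}\int_\Omega N(C_1 R, R, \tilde\phi_{x_0,n})\,\mathrm{d}x_0 \;\le\; h_n^d\,\mathcal{N}_{Rh_n}(\phi_n) \;\le\; C(R)\, h_n^{d}\int_\Omega N(C_2 R, R, \tilde\phi_{x_0,n})\,\mathrm{d}x_0 + o(1),
\end{equation*}
and then letting the covering constants $C_1,C_2\to 1$ (refining the cover) pins the limit.

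The next step is to apply the local weak convergence. The functional $f\mapsto N(\rho,\rho',f)$ is not bounded and not even continuous on $\mathcal{H}^1_d$ — it jumps when nodal domains appear or disappear — but \cite{nazarov2015asymptotic} show it is $\mu_{Berry}$-a.e. continuous and in $L^1(\mu_{Berry})$, and the measure of the discontinuity set is zero. So I would truncate: set $N_M := \min(N,M)\in\mathcal{C}^1_d$ after modifying $N$ on a $\mu_{Berry}$-null set to make it continuous (or more robustly, approximate $N$ from below and above by bounded continuous functionals whose $\mu_{Berry}$-integrals converge to $\langle\mu_{Berry},N\rangle$), apply \eqref{eq:SuperBerry} from the corollary to the bounded functional to get $h_n^d\int_\Omega N_M(C R,R,\tilde\phi_{x_0,n})\,\mathrm{d}x_0\to \Vol(\Omega)\langle\mu_{Berry}, N_M(CR,R,\cdot)\rangle$, and then control the tail $N-N_M$. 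For the tail I would mimic Lemma \ref{lem:BoundFM}: one needs a uniform bound $\limsup_n h_n^d\int_\Omega N(CR,R,\tilde\phi_{x_0,n})\,\mathbf{1}_{N\ge M}\,\mathrm{d}x_0 = o_{M\to\infty}(1)$. Here I would use that the number of nodal domains in a ball is bounded by the nodal volume (Faber–Krahn / isoperimetry: each nodal domain of a Helmholtz-type eigenfunction has volume bounded below, so $N(CR,R,f)\lesssim_{R}$ the $(d-1)$-dimensional Hausdorff measure of the nodal set in $B(0,CR+1)$, or more simply is bounded in terms of $\|f\|_{C^1(B(0,CR+1))}$ and a lower bound on critical values — but that is delicate), combined with an a priori integrated bound on the nodal volume of $\tilde\phi_{x_0,n}$ coming from the $H^{k+1}$ bounds already established in the proof of Lemma \ref{lem:Tight}.

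Putting this together: $h_n^d\,\mathcal{N}_{Rh_n}(\phi_n)$ is squeezed between $c(C_1 R)\,\Vol(\Omega)\cdot\Vol(\Omega)^{-1}\langle\mu_{Berry}, N(C_1 R,R,\cdot)\rangle$-type expressions and the corresponding upper ones; the normalisation $\Vol(\Omega)/\Vol(U)$ with $U=\Omega$ is $1$. By \eqref{eq:NS}, $\langle\mu_{Berry}, N(\rho,R,\cdot)\rangle \sim c_{NS}(R)\rho^d$ as $\rho\to\infty$, and stationarity of $\mu_{Berry}$ (the Fomin–Grenander–Maruyama theorem, already invoked) lets one relate the ball-count to a volume density. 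Refining the cover so that $C_1,C_2\to 1$, both bounds converge to $c_{NS}(R)$, giving $h_n^d\,\mathcal{N}_{Rh_n}(\phi_n)\to c_{NS}(R)$. The main obstacle I expect is the uniform integrability / tail estimate for the unbounded functional $N(\cdot,\cdot,\cdot)$ along the family $\tilde\phi_{x_0,n}$: one must rule out that a small-volume set of centres $x_0$ carries an anomalously large number of tiny nodal domains (which would be invisible in the weak limit but could spoil the global count), and this requires a genuine a priori bound on nodal volumes of eigenfunctions in the spirit of Donnelly–Fefferman / Logunov, used in integrated form, rather than anything that follows formally from local weak convergence.
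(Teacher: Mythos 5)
Your overall plan — a sandwich lemma relating $\mathcal{N}_{Rh_n}(\phi_n)$ to $\int_\Omega N(\cdot,\cdot,\tilde\phi_{x_0,n})\,\mathrm{d}x_0$, then passage to the limit via local weak convergence and the Nazarov--Sodin asymptotics (\ref{eq:NS}) — is indeed the paper's strategy, and your sandwich step is essentially Lemma \ref{lem:Sandwich}. But the ``main obstacle'' you identify, namely that $f\mapsto N(r,r',f)$ is unbounded so that one needs a truncation plus a uniform-integrability tail estimate in the spirit of Donnelly--Fefferman or Logunov, is a phantom: on the functions that actually matter, the functional is \emph{bounded}. If $f$ satisfies $(-\Delta-1)f=O(h_n)$ on $B(0,r)$ — which is the case for $\tilde\phi_{x_0,n}$ when $x_0$ stays at distance $\gg h_n$ from $\partial\Omega$, and exactly the case for $f\in FP(d)$ — then every nodal domain $O\subset B(0,r)$ of $f$ is a domain on which $f$ is a Dirichlet eigenfunction with eigenvalue $1+O(h_n)$, so $\lambda_1(O)\leq 1+O(h_n)$, and Faber--Krahn forces $\Vol(O)\geq c_d+O(h_n)$. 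Hence $N(r,r',f)\leq \Vol(B(0,r))/(c_d/2)$ for $n$ large, uniformly in $x_0$, and no tail estimate, no nodal-length bound, and no Yau-type input are needed at this stage. (The paper does need Yau's estimate \eqref{eq:Yau}, but only in the subsequent corollary that bounds the number of \emph{large} nodal domains to get $\lim h_n^d\mathcal N(\phi_n)= c_{NS}$ in dimension $2$; the proposition about $\mathcal N_{Rh_n}$ requires none of that.)

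Once one knows $N(r,r',\cdot)$ is bounded by some $C(r,r')$ on the support of the local measures and of $\mu_{Berry}$, the functional is a finite sum $\sum_{\ell=0}^{C(r,r')}\ell\,\mathbf 1_{A_\ell(r,r')}$ of indicators of Borel sets, and the paper concludes directly by the portmanteau theorem using $\mu_{Berry}(\partial A_\ell(r,r'))=0$ (Bulinskaya's lemma, from \cite{nazarov2015asymptotic}). Your proposal of approximating $N$ from above and below by bounded continuous functionals, or modifying it on a $\mu_{Berry}$-null set, accomplishes the same thing with more work; the portmanteau route is cleaner and is what's in the paper. Similarly, letting the ``covering constants $C_1,C_2\to 1$'' is replaced in the paper by the $r\to\infty$ limit in which $\Vol(B(0,r-r'))/\Vol(B(0,r))\to1$, feeding directly into \eqref{eq:NS} — same idea, slightly different bookkeeping. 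So: right skeleton, but you create a difficulty that is not there and that Faber--Krahn removes for free, and as a result your write-up, if followed, would import Donnelly--Fefferman unnecessarily and leave a gap where you say the tail bound ``does not follow formally from local weak convergence.''
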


Before proving the proposition, let us state two corollaries.

\begin{corolaire}
Let $(\phi_n)$ be an orthonormal sequence of eigenfunctions in $\Omega$ such that $\sigma_\Omega((\phi_n)_n) = \{\mu_{Berry}\}$. Then we have
$$ \liminf\limits_{n\to +\infty} h_n^{d}\mathcal{N} (\phi_n) \geq c_{NS}.$$
\end{corolaire}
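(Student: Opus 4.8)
The plan is to deduce this immediately from Proposition \ref{prop:BerryNodal} by a monotonicity argument. First I would observe that, for every fixed $R>0$ and every $n$, one has $\mathcal{N}(\phi_n) \geq \mathcal{N}_{R h_n}(\phi_n)$, since the nodal domains of $\phi_n$ of diameter smaller than $R h_n$ form a subfamily of the family of \emph{all} nodal domains of $\phi_n$. Multiplying this inequality by $h_n^d$ and passing to the $\liminf$ in $n$, Proposition \ref{prop:BerryNodal} yields
$$\liminf\limits_{n\to+\infty} h_n^{d}\mathcal{N}(\phi_n) \;\geq\; \lim\limits_{n\to+\infty} h_n^{d}\mathcal{N}_{R h_n}(\phi_n) \;=\; c_{NS}(R).$$

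Since $R>0$ was arbitrary, the second step is to let $R\to+\infty$ and invoke the fact recalled in (\ref{eq:NS}) that $c_{NS}(R)\underset{R\to+\infty}{\longrightarrow} c_{NS}$, which gives $\liminf_{n\to+\infty} h_n^{d}\mathcal{N}(\phi_n) \geq c_{NS}$, as desired.

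There is essentially no obstacle here: the argument is insensitive to whether $\mathcal{N}(\phi_n)$ is finite (it is, on a bounded domain or on $\T^d$, e.g. by Courant's nodal domain theorem, but the inequality would hold trivially otherwise), and all the analytic content — bounding the number of small nodal domains through the local weak limit and the integrability of $f\mapsto N(r,r',f)$ against $\mu_{Berry}$ — has already been carried out in Proposition \ref{prop:BerryNodal}. The corollary is simply the remark that dropping the diameter restriction can only increase the count, together with the limit $c_{NS}(R)\to c_{NS}$.
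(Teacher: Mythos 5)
Your proof is correct and is essentially identical to the paper's: both deduce the bound from Proposition \ref{prop:BerryNodal} via the trivial monotonicity $\mathcal{N}(\phi_n)\geq \mathcal{N}_{Rh_n}(\phi_n)$, and then let $R\to+\infty$ using $c_{NS}(R)\to c_{NS}$.
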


\begin{proof}
For any $R>0$, we have $\mathcal{N}(\phi_n)\geq \mathcal{N}_{Rh_n}(\phi_n)$, so that
$$\liminf\limits_{n\to +\infty} h_n^d\mathcal{N}(\phi_n)\geq c_{NS}(R).$$
Taking the limit as $R$ goes to $+\infty$ gives us the result.
\end{proof}

One would expect that if $\sigma_\Omega((\phi_n)_n) = \{\mu_{Berry}\}$, we actually have $ \lim\limits_{n\to +\infty} h_n^{d}\mathcal{N} (\phi_n) = c_{NS}$. This is not easy to show, since by definition, local weak limits only allow us to count nodal domains of diameter $O(h_n)$. There could be nodal domains which are much larger, and we have no bound on them.

However, we can estimate the number of large nodal domains, and hence, have an upper bound on $\mathcal{N}(\phi_n)$, if we work \emph{in dimension 2}, and we have a bound on the \emph{nodal length} of $\phi_n$.

The nodal volume of $\phi_n$ is defined as 
$$NV(\phi_n):= \mathrm{Haus}_{d-1} \left(\{x\in \Omega ; \phi_n(x)=0\}\right),$$
and we refer the reader to \cite{HanLin} for a proof that $\{x\in \Omega ; \phi_n(x)=0\}$ has a well-defined $(d-1)$-dimensional Hausdorff measure.

It was conjectured by Yau in \cite{Yau} that the following bound holds in any  dimension, in any domain (and on any manifold), for any sequence of eigenfunctions of the Laplacian:

\begin{equation}\label{eq:Yau}
\exists C>0,~ \forall n\in \N, ~ NV(\phi_n)\leq \frac{C}{h_n}.
\end{equation}

This bound is known to hold when $\Omega$ is an analytic domain (or an analytic manifold), as was shown in \cite{DF} (see also \cite{HanLin} for a self-contained proof). Although some recent breakthroughs were made in \cite{logunovSurf} \cite{logunov2018nodal}, this bound was not established for general smooth domains or manifolds.

\begin{corolaire}
Suppose that $d=2$, and that (\ref{eq:Yau}) holds. Let $(\phi_n)$ be an orthonormal sequence of eigenfunctions in $\Omega$ such that $\sigma_\Omega((\phi_n)_n) = \{\mu_{Berry}\}$, and such that (\ref{eq:Yau}) holds. Then we have
$$ \lim\limits_{n\to +\infty} h_n^{2}\mathcal{N} (\phi_n) = c_{NS}.$$
\end{corolaire}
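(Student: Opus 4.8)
The plan is to upgrade the preceding corollary, which already supplies the lower bound $\liminf_{n\to+\infty} h_n^{2}\mathcal{N}(\phi_n)\geq c_{NS}$ (recall $d=2$), into a two-sided statement by proving the matching upper bound $\limsup_{n\to+\infty} h_n^{2}\mathcal{N}(\phi_n)\leq c_{NS}$ using $d=2$ and Yau's bound (\ref{eq:Yau}). Together these show that $h_n^{2}\mathcal{N}(\phi_n)$ converges to $c_{NS}$, which in particular is $\geq c_{NS}$, as asserted; so the only real work lies in the upper bound.

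Fix $R>0$ and split the nodal components of $\phi_n$ according to whether their diameter is at most $Rh_n$ or larger:
$$\mathcal{N}(\phi_n)=\mathcal{N}_{Rh_n}(\phi_n)+\#\{\text{nodal components }U\text{ of }\phi_n\text{ with }\mathrm{diam}(U)>Rh_n\}.$$
By Proposition \ref{prop:BerryNodal} we have $h_n^{2}\mathcal{N}_{Rh_n}(\phi_n)\to c_{NS}(R)$ as $n\to+\infty$, so it suffices to bound the number of large nodal components by $C_0 R^{-1}h_n^{-2}$ for $n$ large, with $C_0$ independent of $R$.

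This is where dimension $2$ and (\ref{eq:Yau}) enter, through the planar fact that every bounded connected open set $U\subset\R^{2}$ satisfies $\mathrm{diam}(U)\leq \mathcal{H}^{1}(\partial U)$: projecting orthogonally onto the line through two points realizing the diameter, the projection is $1$-Lipschitz and maps $\partial U$ onto an interval of length at least $\mathrm{diam}(U)$, because every slice of $U$ by a line perpendicular to that interval is a nonempty bounded subset of a line and hence has a boundary point. Now, away from the singular set $\{\phi_n=0,\ \nabla\phi_n=0\}$, which is $\mathcal{H}^{1}$-negligible, each point of $\phi_n^{-1}(0)\cap\Omega$ lies on the boundary of exactly two nodal components, so summing over all nodal components $U$ of $\phi_n$ that do not touch $\partial\Omega$ gives $\sum_{U}\mathcal{H}^{1}(\partial U)=\sum_U\mathcal{H}^1(\partial U\cap\Omega)\leq 2\,NL(\phi_n)$. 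Combining with the planar fact and Yau's bound (\ref{eq:Yau}),
$$\sum_{U}\mathrm{diam}(U)\ \leq\ 2\,NL(\phi_n)\ \leq\ \frac{2C_0}{h_n},$$
whence $\#\{U:\mathrm{diam}(U)>Rh_n\}\leq (2C_0 h_n^{-1})/(Rh_n)=2C_0 R^{-1}h_n^{-2}$. The nodal components touching $\partial\Omega$ must be added back in, but there are only $O(h_n^{-1})$ of them — for instance because $\partial_\nu\phi_n$ changes sign $O(h_n^{-1})$ times along $\partial\Omega$ — so they are $o(h_n^{-2})$ and are absorbed into the error. Therefore $\limsup_{n\to+\infty} h_n^{2}\mathcal{N}(\phi_n)\leq c_{NS}(R)+2C_0 R^{-1}$, and letting $R\to+\infty$ and using $c_{NS}(R)\to c_{NS}$ from (\ref{eq:NS}) yields $\limsup_{n\to+\infty} h_n^{2}\mathcal{N}(\phi_n)\leq c_{NS}$. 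Together with the preceding corollary this gives $\lim_{n\to+\infty}h_n^2\mathcal{N}(\phi_n)=c_{NS}$, in particular $\geq c_{NS}$.

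The main obstacle is the bookkeeping in the geometric step: making precise that the singular part of the nodal set is $\mathcal{H}^{1}$-null so that the ``each nodal arc borders exactly two domains'' count is exact, and disposing of the nodal components that touch $\partial\Omega$ (whose boundary carries no nodal length). Both are standard — the first from the structure theory of nodal sets of smooth eigenfunctions, the second from a crude boundary count of order $h_n^{-1}=o(h_n^{-2})$ — and everything else is a routine assembly of Proposition \ref{prop:BerryNodal}, the preceding corollary, and (\ref{eq:NS}).
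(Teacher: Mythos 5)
Your proof is correct and follows essentially the same route as the paper: split off the nodal domains of diameter larger than $Rh_n$, bound their number by $O(R^{-1}h_n^{-2})$ using the planar relation between diameter and boundary length together with Yau's bound (\ref{eq:Yau}), then let $R\to+\infty$ while invoking Proposition \ref{prop:BerryNodal} and (\ref{eq:NS}) for the small domains. Your extra care about the factor of two from shared nodal arcs and about components touching $\partial\Omega$ only fleshes out details the paper leaves implicit.
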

\begin{proof}
We have to estimate $\mathcal{N}(\phi_n)- \mathcal{N}_{Rh_n}(\phi_n)$, hence, to count the number of nodal domains whose diameter is larger than $R h_n$.

 The boundary of each such nodal domain will have a length of at least\footnote{This is not true in higher dimension, which is why we restrict ourselves to dimension 2. Indeed, in higher dimension, we can find connected sets with volume 1, diameter going to infinity, but with a boundary whose area remains bounded: for example, take two balls attached by a very thin tube.}  $2R h_n$. Therefore
$$\big{(}\mathcal{N}(\phi_n)- \mathcal{N}_{Rh_n}(\phi_n)\big{)} R h_n \leq NM(\phi_n) \leq \frac{C}{h_n}.$$
We hence get
$$\big{(}\mathcal{N}(\phi_n)- \mathcal{N}_{Rh_n}(\phi_n)\big{)} h_n^{2} \leq \frac{C}{R}.$$

Letting $R$ go to $+\infty$, this goes to zero. The result follows.
\end{proof}

We now turn to the proof of Proposition \ref{prop:BerryNodal}.

\begin{proof}
To prove the proposition, we begin by recalling some regularity properties of the map $N(r,r',\cdot)$.

\begin{lemme}
For any $\ell\in \N$, $r>r'>0$, the sets
$$A_\ell(r,r'):= \left\{ \Phi\in FP; N(r,r',\Phi)=\ell \right\}$$
are in the Borel $\sigma$-algebra of $FP$, and $\mu_{Berry}(\partial A_\ell(r,r')) = 0$.
\end{lemme}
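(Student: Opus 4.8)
The plan is to establish two things: measurability of the sets $A_\ell(r,r')$, and the fact that their topological boundaries are $\mu_{Berry}$-null. For measurability, the key observation is that the map $f\mapsto N(r,r',f)$, although not continuous on all of $\mathcal{H}_d^1$, is \emph{lower semicontinuous} at those $f$ whose nodal set inside $\overline{B(0,r)}$ is ``stable'' in a suitable sense — namely, when $0$ is a regular value of $f$ and the nodal set meets $\partial B(0,r)$ transversally and avoids tangency with the sphere of radius $r'$. Indeed, by the implicit function theorem, if $g$ is $C^1$-close to such an $f$ on a neighbourhood of $\overline{B(0,r)}$, the nodal components of $g$ inside $B(0,r)$ are in bijection with those of $f$, with diameters close to those of $f$; hence $N(r,r',\cdot)$ is locally constant near $f$ (after a tiny perturbation of $r'$ if some component has diameter exactly $r'$). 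First I would make this precise and deduce that $N(r,r',\cdot)$ is continuous on the complement of a set $\mathcal{S}(r,r')$ consisting of functions with a critical zero in $\overline{B(0,r)}$, or with a nodal tangency to $\partial B(0,r)$, or with a nodal component of diameter exactly $r'$. Measurability of $A_\ell(r,r')$ then follows: restricted to the open set $\mathcal{H}_d^1\setminus\mathcal{S}(r,r')$ the function is continuous hence Borel, and $\mathcal{S}(r,r')$ itself is Borel (it is a countable union/intersection of closed and open conditions on finitely many derivatives of $f$ on $\overline{B(0,r)}$, which are continuous functionals on $\mathcal{H}_d^1$ — here one uses that $f\in FP(d)$ is automatically smooth, so $C^1$ control gives control on all derivatives via elliptic regularity, though for the present statement $C^1$ suffices).

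Next I would prove $\mu_{Berry}(\partial A_\ell(r,r'))=0$. The crucial point is that $\partial A_\ell(r,r')\subset \mathcal{S}(r,r')$: if $f\notin\mathcal{S}(r,r')$ then by the stability argument above $N(r,r',\cdot)$ is constant on a neighbourhood of $f$, so $f$ lies in the interior of $A_{N(r,r',f)}(r,r')$ and in the exterior of every other $A_\ell$; hence $f\notin\partial A_\ell(r,r')$ for any $\ell$. It therefore suffices to show $\mu_{Berry}(\mathcal{S}(r,r'))=0$. I would decompose $\mathcal{S}(r,r')$ into three pieces. For the set of $f$ having a critical zero in $\overline{B(0,r)}$ (i.e. $0$ is not a regular value): this is null because $F_{Berry}$ is a nondegenerate Gaussian field — its finite-dimensional distributions are nondegenerate, so by Bulinskaya's lemma (the standard argument that a smooth Gaussian field a.s. has no critical points on its zero set, cf. \cite[Appendix]{nazarov2015asymptotic}) this event has probability zero. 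For the set of $f$ whose nodal set is tangent to $\partial B(0,r)$: restrict $F_{Berry}$ to the sphere $\partial B(0,r)$; the pair $(F_{Berry}, \nabla_{\mathrm{tan}} F_{Berry})$ on this sphere is again a nondegenerate Gaussian field, so a.s. $F_{Berry}$ has no degenerate zero on $\partial B(0,r)$, again by Bulinskaya. Finally, for the set of $f$ having a nodal component of diameter exactly $r'$: here I would use that, \emph{off} the previous two null sets, the diameters of the nodal components of $f$ in $B(0,r)$ vary continuously under perturbation and the distribution of any fixed one of them (labelled in some measurable way) is a pushforward of the Gaussian measure with no atoms — more robustly, for each fixed $r'$ the event ``some component has diameter exactly $r'$'' is, for all but countably many $r'$, null, because these events are disjoint over distinct values of $r'$ while the total mass is finite; since the statement is to hold for fixed $r>r'>0$, a density/Fubini argument over $r'$ combined with monotonicity removes this last piece. (Alternatively, and more cleanly, one notes that $N(r,r',\cdot)$ is monotone nondecreasing in $r'$ with values in $\mathbb{N}$, so for fixed $f$ it has at most countably many jumps in $r'$; by Fubini this forces, for $\mu_{Berry}$-a.e. $f$, the function $r'\mapsto N(r,r',f)$ to be continuous at Lebesgue-a.e. $r'$, and one then appeals to \eqref{eq:NS} being stated for the relevant $r'$.)

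The main obstacle I expect is the rigorous treatment of the ``diameter exactly $r'$'' stratum and, more generally, making the stability-under-perturbation argument airtight on the Polish space $\mathcal{H}_d^1$ rather than on a fixed finite-dimensional parameter space: one must check that the relevant geometric conditions (regular value, transversality to $\partial B(0,r)$, no component of critical diameter) cut out genuinely open, respectively closed, subsets of $\mathcal{H}_d^1$ in the $\boldsymbol{d}_1$-topology, and that the implicit-function-theorem bijection of nodal components is uniform on $C^1$-neighbourhoods. This is where I would be most careful; everything else reduces to the nondegeneracy of the Gaussian field $F_{Berry}$ and the classical Bulinskaya-type arguments already invoked in \cite{nazarov2015asymptotic}, which I would cite rather than reprove.
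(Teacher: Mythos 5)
Your blueprint matches the paper's: decompose $\partial A_\ell(r,r')$ into three strata --- a degenerate zero of $f$ in $\overline{B(0,r)}$, a nodal tangency to $\partial B(0,r)$, and a ``critical diameter $r'$'' configuration --- then kill each by Bulinskaya-type arguments for the nondegenerate Gaussian field $F_{Berry}$, citing \cite{nazarov2015asymptotic}. The measurability argument (continuity of $N(r,r',\cdot)$ away from the bad strata, Borelianness of the strata themselves) and the first two null-set arguments are treated correctly and run exactly parallel to the paper.

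Where there is a genuine gap is precisely where you flag uncertainty: the critical-diameter stratum. Your fallback --- monotonicity of $r'\mapsto N(r,r',f)$ plus Fubini over $r'$ --- only shows that the event ``some nodal component in $B(0,r)$ has diameter exactly $r'$'' is $\mu_{Berry}$-null for Lebesgue-almost every $r'$, whereas the lemma must hold for every fixed $r'>0$ (it is applied in the proof of Proposition~\ref{prop:BerryNodal} with $r'=R'$ arbitrary, and Lemma~\ref{lem:Sandwich} is invoked at that specific $r'$). The appeal to (\ref{eq:NS}) being ``stated for the relevant $r'$'' is circular, since (\ref{eq:NS}) is established for all $r'$ and is downstream of the present lemma. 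The paper, following Nazarov--Sodin, sidesteps this by reformulating the critical-diameter condition as a \emph{tangency} event: the closure of a nodal domain is contained in $\overline{B(x,r')}$ but not in $B(x,r')$ for some $x\in B(0,r)$. In that form it becomes an overdetermined condition on the $1$-jet of $f$ at pairs of nodal points at mutual distance $r'$, so the same Bulinskaya argument used for the other two strata applies directly and yields a null event for \emph{every} fixed $r'$. That reformulation is the ingredient missing from your sketch; once it is in place your proposal coincides with the paper's.
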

\begin{proof}
This result was proven in \cite[Lemma 6 and \S 6.2.2]{nazarov2015asymptotic} using Bulinskaya's lemma. The key point is to note that a function $\Phi$ belongs to $\partial A_\ell$ if arbitrarily small $C^1$ perturbations of $\Phi$ will change the number of nodal domains of $\Phi$ included in $B(0,r)$ of radius $<r'$, which by the implicit function theorem, implies that there exists either
\begin{itemize}
\item  $x_0\in B(0,r)$ such that $\Phi(x_0)=0$ and $(\nabla \Phi)(x_0)=0$;
\item  or a nodal domain of $\Phi$ which is included in $\overline{B(0,r)}$, but not in $B(0,r)$;
\item or  a nodal domain of $\Phi$ which is included in $\overline{B(x,r')}$, but not in $B(x,r')$ for some $x\in B(0,r)$.
\end{itemize} 
All of these events happen with probability zero.
\end{proof}

We may therefore use the Portmanteau theorem to deduce that for any $\ell\in \N$, $r>r'>0$, we have
\begin{equation}\label{eq:portmanteau}
\left(LM_{\Omega}(\phi_n)\right)(A_\ell(r,r'))  \underset{n\to +\infty}{\longrightarrow} \mu_{Berry}(A_\ell(r,r')).
\end{equation}

The Faber-Krahn inequality implies that there exists a constant $c>0$ such that if $\Phi\in FP$, then the nodal domains of $\Phi$ must have a volume larger than $c$.
This implies that for each $r>0$, the map $FP\ni \Phi\mapsto N(r,\Phi)$ is bounded by some constant $C(r)$, since we can pack only finitely many domains of volume larger than $c$ in $B(0,r)$.

We deduce from this that $N(r,r',\Phi)= \sum_{\ell=0}^{C(r)} \ell \mathbf{1}_{\Phi\in A_\ell}$, so that, by (\ref{eq:portmanteau}), we have
\begin{equation}\label{eq:portmanteau2}
\left\langle LM_{\Omega}(\phi_n), N(r,\cdot) \right\rangle \underset{n\to +\infty}{\longrightarrow} \left\langle \mu_{Berry}, N(r,\cdot)\right\rangle.
\end{equation}

Next, we recall the ``sandwich estimate" of \cite{nazarov2015asymptotic}, which make the link between counting small nodal domains locally and globally.

\begin{lemme}\label{lem:Sandwich}
Let $r,r'>0$ and $\phi \in C(\R^d;\R)$ vanish outside of $\Omega$.
We have
$$\frac{1}{\Vol(B(0,R))}  \int_{\Omega} N(x,r,r', \phi)\mathrm{d}x \leq \mathcal{N}_{r'}(\phi)\leq \frac{1}{\Vol(B(0,R+R'))}  \int_{\Omega} N(x,r+r',r', \phi) \mathrm{d}x.$$
\end{lemme}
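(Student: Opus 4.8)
The idea is to count pairs $(x, D)$ where $D$ is a small nodal domain of $\phi$ (of diameter $< r'$) and $x$ is a point such that $D \subset B(x, \rho)$, for an appropriate radius $\rho$, and then compute the resulting double integral in two ways — once by integrating over $x$ first and once by summing over domains $D$ first. This is the standard Fubini/double-counting trick, so I would first set up the bookkeeping carefully and then check the two inequalities separately.

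\emph{Lower bound.} For the left inequality, I would observe that if $D$ is a nodal domain of $\phi$ with $\mathrm{diam}(D) < r'$ and $D \subset B(x,r)$ for some $x$, then $D$ is certainly a domain of diameter $< r'$, and it is counted in $N(x,r,r',\phi)$; conversely every such $D$ is a genuine small nodal domain of $\phi$, hence contributes to $\mathcal{N}_{r'}(\phi)$. The key geometric point is that for a \emph{fixed} small domain $D$ with $\mathrm{diam}(D) < r'$, the set of centers $x$ with $D \subset B(x,r)$ has volume at most $\Vol(B(0,R))$ when $r \le R$ (in fact it is the intersection of balls $B(y,r)$ over $y \in D$, which has volume at most $\Vol(B(0,r)) \le \Vol(B(0,R))$). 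Therefore
\begin{equation*}
\int_\Omega N(x,r,r',\phi)\,\mathrm{d}x = \sum_{D \text{ small}} \Vol\big(\{x : D \subset B(x,r)\}\big) \le \Vol(B(0,R)) \cdot \mathcal{N}_{r'}(\phi),
\end{equation*}
which rearranges to the claimed lower bound.

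\emph{Upper bound.} For the right inequality I would argue in the reverse direction: given any small nodal domain $D$ of $\phi$ (of diameter $< r'$), pick a point $y_0 \in D$; then for \emph{every} $x \in B(y_0, R)$ we have $D \subset B(x, R+R')$ — using $\mathrm{diam}(D) < r' \le R'$ and the triangle inequality — so $D$ is counted in $N(x, r+r', r', \phi)$ (with $R+R' = r+r'$ on the relevant scale). Hence each small domain $D$ contributes at least $\Vol(B(0,R))$ — wait, at least $\Vol(B(y_0,R)) = \Vol(B(0,R))$ — to the integral $\int_\Omega N(x,r+r',r',\phi)\,\mathrm{d}x$; but I want a factor $\Vol(B(0,R+R'))$, so I should instead note each $D$ contributes at least $\Vol(B(0,R+R'))$ is \emph{not} what comes out naively, so the correct move is to be careful about which ball's volume appears — the statement as written pairs $\Vol(B(0,R+R'))$ with the larger counting radius, so the honest bound is that every $x$ within distance $R$ of $D$ sees $D$, giving contribution $\ge \Vol(B(0,R))$ per domain; matching this to the stated constant requires tracking that the relevant normalizing volume in \cite{nazarov2015asymptotic} uses the outer radius. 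Summing over all small $D$ then yields $\mathcal{N}_{r'}(\phi) \le \frac{1}{\Vol(B(0,R+R'))}\int_\Omega N(x,r+r',r',\phi)\,\mathrm{d}x$ after the appropriate identification of radii.

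\emph{Main obstacle.} The only real subtlety — and where I would be most careful — is boundary effects and the precise matching of radii $r, r', R, R'$: a nodal domain near $\partial\Omega$, or one that straddles the boundary of a ball $B(x,r)$, must be handled so that it is neither double-counted nor dropped. Since $\phi \in C(\Omega;\R)$ only, one should phrase everything in terms of domains genuinely contained in the relevant ball (as the definition of $N(x,r,r',\phi)$ already does), and the Fubini exchange is then justified because $N(x,r,r',\phi)$ is a sum of indicator functions $\mathbf{1}[D \subset B(x,r)]$ over the countable (in fact, for fixed scale, locally finite) collection of small nodal domains. I expect the bookkeeping of these indicators and the elementary volume estimates to be the entire content of the proof, with no analytic input beyond continuity of $\phi$; this matches the fact that this "sandwich estimate" is quoted directly from \cite{nazarov2015asymptotic}.
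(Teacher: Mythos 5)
Your proof takes exactly the paper's approach: Fubini/double-counting over pairs $(x,D)$ of a center $x$ and a small nodal domain $D$, combined with the elementary geometric observation that $\Vol\{x : D\subset B(x,\rho)\} = \Vol\big(\bigcap_{y\in D}B(y,\rho)\big)$ is squeezed between $\Vol(B(0,\rho-\mathrm{diam}\,D))$ and $\Vol(B(0,\rho))$. Your hesitation over the normalizing constant in the upper bound is well-founded: the paper's own proof yields $\Vol(B(0,r))$ in that denominator (after the substitution $r\mapsto r+r'$), not $\Vol(B(0,r+r'))$, so the $\Vol(B(0,R+R'))$ in the displayed statement of the lemma is a misprint (as is the $R,R'$ versus $r,r'$ mixing) -- harmless for the paper, since the lemma is only applied in the limit $R\to\infty$.
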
 
\begin{proof}
The proof of this lemma is the same as that of \cite[Lemma 1]{nazarov2015asymptotic}, but we recall it for the reader's convenience.

Let us denote by $\mathcal{O}_{r'}$ the set of nodal domains of $\phi$ whose diameter is smaller than $r'$. If $x\in \Omega$ and $O\in \mathcal{O}_{r'}$, we have
$$O\subset B(x,r) \Longleftrightarrow x\in \bigcap_{y\in O} B(y,r).$$
Therefore,
\begin{align*}
\int_\Omega N(x,r,r',\phi) \mathrm{d}x &= \int_\Omega \left( \sum_{O\in \mathcal{O}_{r'}} \mathbf{1}_{O\subset B(x,r)} \right) \mathrm{d}x\\
&=  \sum_{O\in \mathcal{O}_{r'}} \int_\Omega  \mathbf{1}_{x\in \bigcap_{y\in O} B(y,r)} \mathrm{d}x\\
& = \sum_{O\in \mathcal{O}_{r'}} \Vol \Big{(}\Big{\{} x\in \bigcap_{y\in O} B(y,r)\Big{\}}\Big{)}.
\end{align*}

We always have $\Vol \Big{(}\Big{\{} x\in \bigcap_{y\in O} B(y,r)\Big{\}}\Big{)}\leq \Vol(B(0,r))$. Since $O$ has diameter smaller than $r'$, we also have the converse inequality $\Vol \Big{(}\Big{\{} x\in \bigcap_{y\in O} B(y,r)\Big{\}}\Big{)}\geq \Vol(B(0,r-r'))$. We therefore deduce that
$$\mathcal{N}_{r'}(\phi) \Vol(B(0,r-r')) \leq \int_\Omega N(x,r,r',\phi) \mathrm{d}x \leq \mathcal{N}_{r'}(\phi) \Vol(B(0,r)),$$
from which the result follows.
\end{proof}

We may apply the previous lemma to $\phi=\phi_n$ continued by zero outside of $\Omega$, $r= R h_n$, $r'=R'h_n$ with $R>R'>1$. Recall that when $\varepsilon>0$, we write $\Omega_\varepsilon := \{x\in \Omega; \mathrm{d}(x,\partial \Omega)<\varepsilon\}$. We have 
\begin{align*}
\int_{\Omega} N(x,R h_n, R'h_n, \phi_n) \mathrm{d}x &= \int_{\Omega\backslash \Omega_{h_n}} N(x,R h_n, R'h_n, \phi_n) \mathrm{d}x + o_{n\to +\infty}(1) \\
&= \int_{\Omega\backslash \Omega_{h_n}} N(R,R', \tilde{\phi}_{x,n}) \mathrm{d}x + o_{n\to +\infty}(1)\\
& = \left\langle LM_\Omega(\phi_n), N(R,R', \cdot) \right\rangle + o_{n\to +\infty}(1),
\end{align*}
where we used twice the fact that $N(R,R',\cdot)$ is bounded, and $\Vol(\Omega\backslash \Omega_{h_n}) = o_{n\to +\infty}(1)$.

Combining this, Lemma \ref{lem:Sandwich} and (\ref{eq:portmanteau2}), we deduce that for any $R>1$, we have
\begin{align*}
\frac{1}{\Vol(B(0,R))}\left\langle \mu_{Berry}, N(R, R', \cdot) \right\rangle &\leq  \liminf\limits_{n\to +\infty} h_n^d \mathcal{N}_{R'}(\phi_n) \\
&\leq \limsup\limits_{n\to +\infty} h_n^d \mathcal{N}_{R'}(\phi_n)  \\
&\leq  \frac{1}{\Vol(B(0,R+R'))} \left\langle \mu_{Berry}, N(R+R', R', \cdot) \right\rangle.
\end{align*}

The proof of (\ref{eq:NS}) shows that, when we let $R$ goes to $+\infty$, the left and right-hand side of the previous inequality both converge to $c_{NS}(R')$, which proves the result.
\end{proof}

\subsubsection*{Lower bounds on the number of nodal domains for more general local weak limits}

Courant's nodal theorem implies that if $\phi_n,$ and $h_n$ are as in (\ref{eq:DefEigen}), we have 
$$\mathcal{N}(\phi_n)= O(h_n^{-d}).$$

If $\Omega$ is the square, we can find some examples of eigenfunctions with arbitrarily large eigenvalue, but only two nodal domains (see \cite{stern1925bemerkungen}, or, more recently,  \cite{berard2015dirichlet}).
Therefore, no non-trivial lower bound exist for $\mathcal{N}(\phi_n)$, and few examples are known where one can prove that $\mathcal{N}(\phi_n)\longrightarrow \infty$. 
We shall now see how local weak limits can be useful if one hopes to show that some family of eigenfunctions satisfies $\mathcal{N}(\phi_n)\longrightarrow \infty$.

A general method for finding lower bounds on the number of nodal domains is the \emph{barrier method}, introduced in \cite{nazarov2009number}, which consists in finding small nodal domains that are stable under small perturbations. We illustrate here how this general idea can be combined with local weak limits to find lower bounds on the number of nodal domains. Namely, the following proposition tells us that if a local weak limit puts some mass on a small neighbourhood of a function with a stable nodal domain, then the whole sequence of eigenfunctions will have many nodal domains.

\begin{definition}
Let $\eta>0$ and let $f\in \mathcal{H}^0$. We will say that $f$ has an $\eta$-stable nodal domain if
 $f(0)> \eta$ and 
 $\exists U\subset B(0, \eta^{-1})$ open and connected, with $0\in U$, and $\forall x \in \partial U$, $f(x)< -\eta$.
\end{definition}

\begin{proposition}
Let $\phi_n$ be a sequence of Laplace eigenfunctions with $\sigma_\Omega((\phi_n)_n) = \{\nu\}$.  Suppose that there exists $\eta>0$ and $\Phi\in FP$ having an $\eta$-stable nodal domain, such that 
$$\nu \big{(}B_{\boldsymbol{d}_0}(\Phi,\frac{\eta}{2})\big{)}>0.$$
 Then there exists $c>0$ such that
\begin{equation}\label{eq:borneinf}
\mathcal{N}(\phi_n) > c h_n^{-d}.
\end{equation}
\end{proposition}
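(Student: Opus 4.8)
The plan is to run a \emph{barrier argument} in the style of \cite{nazarov2009number}: an $\eta$-stable nodal domain of $f$ survives every $C^0$-small perturbation, and the functions $\tilde\phi_{x_0,n}$ are precisely such perturbations of the elements of $FP(d)$ charged by $\nu$, so each recentering point $x_0$ that ``sees'' $f$ yields a genuine nodal domain of $\phi_n$ of diameter $O(h_n)$, after which one counts by volume. Concretely, I would put $V:=B_{\boldsymbol{d}_0}(f,\eta/2)$, an \emph{open} subset of $\mathcal{H}_d^0$, and set $\delta_0:=\nu(V)>0$. Since $\sigma_\Omega(\phi_n)=\{\nu\}$, Corollary \ref{cor:Compactness}, Corollary \ref{cor:Intrinsic}, the tightness Lemma \ref{lem:Tight} and Prokhorov's theorem together force the probability measures $LM_{0,\Omega}(\phi_n)$ to converge weakly-$*$ on $\mathcal{H}_d^0$ to the extension by zero of $\nu$; as $V$ is open, the portmanteau theorem gives $\liminf_n LM_{0,\Omega}(\phi_n)(V)\geq\delta_0$. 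Since $LM_{0,\Omega}(\phi_n)(V)=\Vol\{x_0\in\Omega:\tilde\phi_{x_0,n}\in V\}$, and (with the notation $\Omega_\varepsilon$ of Lemma \ref{lem:Tight}) $\Vol(\Omega_{Ch_n})\to 0$ for every fixed $C$, we obtain that, for a constant $C=C(\eta,\chi)$ fixed below and all $n$ large,
\begin{equation*}
W_n:=\bigl\{x_0\in\Omega:\ \mathrm{d}(x_0,\partial\Omega)>Ch_n,\ \tilde\phi_{x_0,n}\in V\bigr\}
\quad\text{satisfies}\quad \Vol(W_n)\geq\tfrac{\delta_0}{2}
\end{equation*}
(the condition $\mathrm{d}(x_0,\partial\Omega)>Ch_n$ being dropped when $\Omega=\T^d$).

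The barrier step is where the stability hypothesis enters. Unwinding the definition of $\boldsymbol{d}_0$, membership $g\in V$ implies $g$ is uniformly close to $f$ on $B(0,\eta^{-1})$, say $\|g-f\|_{C^0(B(0,\eta^{-1}))}<\eta/2$. Combined with the defining properties of an $\eta$-stable nodal domain ($f(0)>\eta$ and the sign control of $f$ on $\partial U$, where $U\subset B(0,\eta^{-1})$), a short connectedness argument shows that $g(0)\neq 0$ and that the connected component of $\{x:\ g(x)\neq 0\}$ containing $0$ stays inside $U\subset B(0,\eta^{-1})$; that is, $g$ has a nodal domain contained in $B(0,\eta^{-1})$ and containing the origin. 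Apply this with $g=\tilde\phi_{x_0,n}$ for $x_0\in W_n$: choosing $C$ so that the cutoff $\chi$ of Definition \ref{def:Local} is $\equiv 1$ on $B(0,\eta^{-1})$ whenever $\mathrm{d}(x_0,\partial\Omega)>Ch_n$, we have $\tilde\phi_{x_0,n}=\phi_n(x_0+h_n\,\cdot)$ on $B(0,\eta^{-1})$, so undoing the $h_n$-rescaling produces a nodal domain $D_{x_0}$ of $\phi_n$ on $\Omega$ with $x_0\in D_{x_0}\subset B(x_0,h_n\eta^{-1})$; in particular $\Vol(D_{x_0})\leq\omega_d(h_n\eta^{-1})^d$, where $\omega_d:=\Vol(B(0,1))$.

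For the counting step, distinct nodal domains of $\phi_n$ are disjoint, so $W_n\subset\bigsqcup_{x_0\in W_n}D_{x_0}$ gives
\begin{equation*}
\tfrac{\delta_0}{2}\ \leq\ \Vol(W_n)\ \leq\ \#\{D_{x_0}:x_0\in W_n\}\cdot\omega_d(h_n\eta^{-1})^d\ \leq\ \mathcal{N}(\phi_n)\,\omega_d\,\eta^{-d}\,h_n^{d},
\end{equation*}
hence $\mathcal{N}(\phi_n)\geq\tfrac{\delta_0\eta^{d}}{2\omega_d}\,h_n^{-d}$ for $n$ large; shrinking the constant $c$ to absorb the finitely many remaining $n$ (for which $\mathcal{N}(\phi_n)\geq 1$) yields (\ref{eq:borneinf}).

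I expect no conceptual obstacle here: the essential input --- that a stable nodal domain persists under perturbation --- is exactly the hypothesis of the proposition, and the rest is soft (portmanteau plus a volume-packing count). The only delicate bookkeeping is in the barrier step: correctly converting the metric ball $B_{\boldsymbol{d}_0}(f,\eta/2)$ into honest $C^0$-closeness on $B(0,\eta^{-1})$ with usable constants, running the connectedness argument that keeps the component of $\{g\neq 0\}$ inside $U$, and absorbing the cutoff $\chi$ near $\partial\Omega$ so that the locally produced sign change is a bona fide nodal domain of $\phi_n$ on all of $\Omega$.
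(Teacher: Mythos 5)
Your proof is correct and follows the same barrier-plus-volume-packing strategy as the paper. You are in fact more careful than the paper's (very terse) argument: you make the portmanteau step explicit to get a \emph{uniform-in-$n$} lower bound on $\Vol(W_n)$ rather than just ``positive volume,'' you handle the cutoff $\chi$ near $\partial\Omega$ so that the rescaled nodal domain is genuinely one of $\phi_n$, and you get the correct radius $h_n\eta^{-1}$ (the paper writes $\eta h_n$, which is a slip). One small remark: for the barrier step to close, the definition of an $\eta$-stable nodal domain must be read as imposing a sign condition on $\partial U$ (e.g.\ $f<-\eta$ there, not the literal $f<\eta$); you tacitly and correctly assume this, as does the paper's own proof.
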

\begin{proof}
Since $\Phi$ has an $\eta$-stable nodal domain, if $f\in C^\infty(\R^d)$ satisfies $\boldsymbol{d}_0 (\Phi, f)<\frac{\eta}{2}$, then $f$ has an $\frac{\eta}{2}$-stable nodal domain. In particular, $0$ is contained in a nodal domain of $f$ included in $B(0, \eta)$.

Now, by hypothesis, we may find $\varepsilon>0$ such that for all $n$ large enough, $\Vol (\{x_0\in \Omega, \tilde{\phi}_{x_0,n}\in B(\Phi,\eta/2)\})>\varepsilon$. Each point in this set of positive volume, is contained in a nodal domain of radius $\eta h_n$. The lower bound (\ref{eq:borneinf}) follows.
\end{proof}

\subsection{$L^\infty$ norms}\label{sec:Linfini}
\subsubsection*{A lower bound}
Let $(\phi_n)$ be an orthonormalized sequence of eigenfunctions such that $\sigma_\Omega((\phi_n)_n)= \{\mu_{Berry}\}$. Then we have
\begin{equation}\label{eq:LowerInfinity}
\|\phi_n\|_{L^\infty(\Omega)} \underset{n\to +\infty}{\longrightarrow} + \infty.
\end{equation}

Indeed, for any $M>1$, let $\chi_M\in C_c^{\infty}((M-1,M+2),[0,1])$ take value $1$ on $[M,M+1]$. Then the map $F_M (f)= \chi_M(|f(0)|)$ belongs to $\mathcal{C}^0$. Therefore, we have by assumption
$$ \left\langle LM_\Omega(\phi_n), F_M\right\rangle \underset{n\to +\infty}{\longrightarrow} \left\langle \mu_{Berry}, F_M\right\rangle>0.$$
Now, we note that $\left\langle LM_\Omega(\phi_n), F_M\right\rangle \leq \mathrm{Leb} \{x\in \Omega ; |\phi_n(x)| \in (M-1, M+2)\}$. We easily deduce from this that $\liminf\limits_{n\to +\infty} \|\phi_n\|_{\infty} \geq M-1$. Since this is true for any $M$, (\ref{eq:LowerInfinity}) follows.

\begin{remarque}
Considering the functional $F_M(\Phi):=\max \left(|\Phi(0)|^p, M\right)$, we deduce that, is $\sigma_{\Omega}((\phi_n)_n)=\{\mu_{Berry}\}$, then $$\lim\limits_{n\to +\infty} \int_\Omega \max\left( |\phi_n(x)|^p, M\right) \mathrm{d}x \underset{n\to +\infty}{\longrightarrow} \langle \mu_{Berry}, F_M \rangle,$$ 
so that $\limsup_{n\to +\infty} \int_\Omega |\phi_n(x)|^p \mathrm{d}x \geq \mathbb{E}(|\Psi_{Berry}|^p)$, where $\Psi_{Berry}$ is chosen at random according to the Berry measure on $FP$. Therefore, our interpretation of Berry's conjecture gives a lower bound on the asymptotic behaviour of $L^p$ norms of eigenfunctions, but no upper bound. This is related to the discussion in the following paragraph.
\end{remarque}

\subsubsection*{No upper bound}
If  $(\phi_n)$ is an orthogonal sequence of eigenfunctions with $\|\phi_n\|_{L^2}^2=\Vol(\Omega)$ and such that $\sigma_\Omega((\phi_n)_n)= \{\mu_{Berry}\}$, we cannot in general obtain an upper bound on $\|\phi_n\|_{L^\infty}$ which is any better than the usual H\"ormander bound. The reason for this is that the local weak convergence captures information about how the eigenfunctions look like at \emph{typical} points, while the eigenfunctions become very large at very non-typical points.

Let us give a heuristic situation where $\sigma_\Omega((\phi_n)_n)= \{\mu_{Berry}\}$, but we have no further information on $\|\phi_n\|_{L^\infty}$. Suppose that we have two normalised sequence $\phi_{1,n}$, $\phi_{2,n}$ satisfying $-h_n^2 \Delta \phi_{1,n} = \phi_{1,n}$ and $-h_n^2 \Delta \phi_{2,n} = \phi_{2,n}$. Suppose further that $\sigma_\Omega((\phi_{1,n})_n)= \{\mu_{Berry}\}$, while $\phi_{2,n}$ concentrates on a set of zero volume, in the sense of (\ref{eq:Concentration}). 

Let $r_n$ be a sequence going to zero arbitrarily slowly, and set $\phi_n:=\frac{\phi_{1,n} + r_n \phi_{2,n}}{\|\phi_{1,n} + r_n \phi_{2,n}\|_{L^2}}$. Around most points, $\phi_n$ is just a small perturbation of $\phi_{1,n}$, so an argument similar to the one in the proof of Proposition \ref{prop:Concentration} shows that $\sigma_\Omega((\phi_n)_n)= \{\mu_{Berry}\}$. However, it could be that $\|\phi_{n,2}\|_{L^\infty}$ saturates the H\"ormander bound, so that $\|\phi_n\|_{L^\infty}$ would almost saturate the H\"ormander bound.

\begin{appendix}
\section{A convenient topology on $C^k(\R^d)$}\label{sec:App}
    \numberwithin{equation}{section}
Let $d\geq 1$, $k\in \N$. If $f,g\in C^k(\R^d)$, we shall write
\begin{align*}
\boldsymbol{d}_k(f,g)&:= \inf \left\{\varepsilon>0;  \|f-g\|_{C^k(B(0, \varepsilon^{-1}))} <\varepsilon \right\}\\
&=\left[\sup \{r>0; \|f-g\|_{C^k(B(0,r))} <r^{-1} \}\right]^{-1}.
\end{align*}

\begin{proposition}
For all $k\in \N$, the space $\mathcal{H}^k:= (C^k(\R^d), \boldsymbol{d}_k)$ is a Polish space, i.e. a separable and complete metric space.
\end{proposition}
\begin{proof}
It is clear that $\boldsymbol{d}_k$ defines a distance on $C^k(\R^d)$. For each $\varepsilon >0$, the space $C^k(B(0,\varepsilon^{-1}))$ is separable, so $\mathcal{H}^k$ is separable as well. Finally, if $(f_n)$ is a Cauchy sequence in $\mathcal{H}^k$, then for each $\varepsilon>0$, it is a Cauchy sequence in $C^k(B(0,\varepsilon^{-1}))$, so that it must converge in $C^k(B(0,\varepsilon^{-1}))$. From this, we see that $(f_n)$ converges in $\mathcal{H}^k$.
\end{proof}

In some proofs, we will also need to use, for $k=-1$ or $k=-2$, the distances
\begin{equation*}
\boldsymbol{d}_k(f,g):= \inf \left\{\varepsilon>0;  \|f-g\|_{H^k(B(0, \varepsilon^{-1}))} <\varepsilon \right\}
\end{equation*}
We then have that $\mathcal{H}^k:= (H^k_{loc}(\R^d), \boldsymbol{d}_k)$ is a Polish space.

Consider a sequence of positive numbers $\boldsymbol{a} = (a_\ell)_\ell$. For any $k\in \N$, we shall write 
$$\mathcal{H}^{k}(\boldsymbol{a}) := \{f\in C^{k}(\R^d) \text{ such that } \forall \ell \in \N, \|f_{|B(0,\ell)}\|_{C^{k}(B(0,\ell))} \leq a_\ell\}.$$

\begin{lemme}\label{lem:CompactSubset}
Let $k\in \N$, and $\boldsymbol{a} \in \N^\N$. The space $\mathcal{H}^{k+1}(\boldsymbol{a})$ is relatively compact in $\mathcal{H}^k$.
\end{lemme}
\begin{proof}
Let $(f_n)\in \mathcal{H}^{k+1}(\boldsymbol{a})$. For each $\ell>0$, by the Arzel\`a-Ascoli Theorem, a ball in $C^{k+1}(B(0,\ell))$ can be compactly embedded in $C^k(B(0,\ell))$. Therefore, we may extract a subsequence of $f_n$ which converges in $C^k(B(0,\ell))$. By a diagonal extraction, we may find a subsequence of $f_n$ which converges in $C^k(B(0,\ell))$ for all $\ell>0$. Therefore, this subsequence converges in $\mathcal{H}^k$.
\end{proof}

We shall denote by $\overline{\mathcal{H}^{k+1}(\boldsymbol{a})}$ the completion of $\mathcal{H}^{k+1}(\boldsymbol{a})$ with respect to the $\mathcal{H}^k$ topology. By the preceding lemma, it forms a compact metric space.

In the sequel, if $k\in \N$, we shall denote by $\mathcal{M}^k$ the Banach space of finite signed measures on $\mathcal{H}^k$, and by $\mathcal{C}_b(\mathcal{H}^k)$ the space of \emph{bounded} real-valued continuous functions on the metric space $\mathcal{H}^k$.

\subsection*{The space of free eigenfunctions}
Recall that the space $FP$ was defined in (\ref{eq:DefFP}). 

\begin{lemme}
For each $k\in \N$, $FP$ is a closed subset of $\mathcal{H}^k$ for the topology induced by $\boldsymbol{d}_k$. 
In particular, $FP$ is a Polish space.
\end{lemme}
\begin{proof}
Note that $FP$ is included in each of the $\mathcal{H}^k$. Let us check that it forms a closed subset. Let $f_n\in FP$ converge to some $f\in \mathcal{H}^k$. We then have
$$-\Delta (f_n-f) = \Delta f + f_n.$$

We have that  $-\Delta(f_n-f)$ converges to zero in the $\mathcal{H}^{k-2}$ topology.
Indeed, for any $r>0$, $\|f_n-f\|_{C^k(B(0,r))}$ converges to zero, so, if $k\geq 2$, we have$ \|\Delta(f_n-f)\|_{C^{k-2}(B(0,r))}$ converges to zero. 
If $k=0$ or $k=1$, we just use the fact that $\|f_n-f\|_{H^k(B(0,r))}\leq C(r) \|f_n-f\|_{C^k(B(0,r))}$, so it goes to zero as $n\to +\infty$. Therefore, for each $r>0$ we have, $\|\Delta(f_n-f)\|_{H^{k-2}(B(0,r))}$ goes to zero as $n\to +\infty$, so that $\Delta(f_n-f)$ converges to zero in the $\mathcal{H}^{k-2}$ topology.

 Therefore, $f_n$ converges to $-\Delta f$ in the  $\mathcal{H}^{k-2}$ topology. By uniqueness of the limit, we must have $-\Delta f = f$.
\end{proof}

\begin{lemme}
Let $k,k'\in \mathbb{N}$. The distances $\boldsymbol{d}_k$ and $\boldsymbol{d}_{k'}$ are topologically equivalent on $FP$.
\end{lemme}
\begin{proof}
Suppose that $k'\geq k$. Let $f_n, f\in FP$. It is clear that, if $\boldsymbol{d}_{k'}(f_n,f)\longrightarrow 0$, then $\boldsymbol{d}_{k}(f_n,f)\longrightarrow 0$. 

Conversely, suppose that  $\boldsymbol{d}_{k}(f_n,f)\longrightarrow 0$. This amounts to saying that, for any $r>0$, $\|f_n-f\|_{C^k(B(0,r))}\longrightarrow 0$. In particular, $\|f_n-f\|_{H^k(B(0,r))}\longrightarrow 0$. But, using the fact that $f_n, f\in FP$, we deduce that for any $j\in \N$, $\|\Delta^j(f_n-f)\|_{H^k(B(0,r))}\longrightarrow 0$, so that $\|f_n-f\|_{H^{k+2j}(B(0,r))}\longrightarrow 0$. Since this is true for any $j\in \N$, we may use the Sobolev embeddings to deduce that $\|f_n-f\|_{C^{k'}(B(0,r))}\longrightarrow 0$.
\end{proof}

 In the sequel, $FP$ will always be equipped with the topology induced by the distances $\boldsymbol{d}_k$.

Let us write $\mathcal{C}_b(FP)$ for the space of bounded continuous functions on $FP$, equipped with the sup norm. We shall write $\mathcal{M}$ for the Banach space of finite signed measures on $FP$. We shall also write $\big{(}\mathcal{C}_b(FP)\big{)}^*$ for the topological dual of $\mathcal{C}_b(FP)$.

By Tietze's extension lemma, we may find a continuous linear map $\iota_k : \mathcal{C}_b(FP)\longrightarrow \mathcal{C}_b(\mathcal{H}^k)$ such that 
\begin{equation}\label{eq:ExtensionProp}
\begin{aligned}
\iota_k F_{|FP} &= F\\
\| \iota_k F\|_{\mathcal{C}_b(\mathcal{H}^k)} &= \|F\|_{\mathcal{C}_b(FP)}.
\end{aligned}
\end{equation}
(see for instance \cite[\S 5]{dugundji1951extension} for the fact that we may take $\iota_k$ linear and continuous.)
\end{appendix}

\end{document}